\documentclass[11pt]{amsart}
\usepackage{amsfonts,amsmath,amsthm,amssymb,amscd,latexsym,enumerate,etoolbox,mathrsfs,comment, graphicx, enumitem}
\usepackage[all]{xy}
\usepackage{thmtools, thm-restate}
\usepackage{tikz}
\usetikzlibrary{arrows.meta, positioning}
\usepackage[T1]{fontenc}

\newcommand{\R}{\mathbb{R}}
 
\newcommand{\N}{\mathbb{N}}

\newcommand{\Z}{{\mathbb Z}}

\newcommand{\G}{\mathcal{G}}

\renewcommand{\phi}{\varphi}

\newcommand{\into}{\hookrightarrow}
\newcommand{\coker}{\textup{coker}}

\newcommand{\orb}{\mathrm{orb}}

\theoremstyle{plain}
   \newtheorem*{theorem*}{Theorem}
    \newtheorem{theorem}{Theorem}[section]
    \newtheorem{lemma}[theorem]{Lemma}
    \newtheorem{corollary}[theorem]{Corollary}
    \newtheorem{proposition}[theorem]{Proposition}
    
\theoremstyle{definition}
    \newtheorem{definition}[theorem]{Definition}
    \newtheorem{example}[theorem]{Example}

    \newtheorem{remark}[theorem]{Remark}

\theoremstyle{remark}

\DeclareMathOperator{\id}{id}

\DeclareMathOperator{\rank}{rank}

\title[orbit-breaking and Kirchberg algebras]{orbit-breaking in Deaconu--Renault groupoids and models for UCT Kirchberg algebras
}

\author[R.J. Deeley, I.F. Putnam, K.R. Strung]
{Robin J. Deeley \and
Ian F. Putnam \and
Karen R. Strung}
\address{Department of Mathematics,
University of Colorado Boulder
Campus Box 395,
Boulder, CO 80309-0395, USA }
\email{robin.deeley@gmail.com}

\address{Department of Mathematics and Statistics,
University of Victoria,
Victoria, B.C., Canada V8W 3R4} 
\email{ifputnam@uvic.ca}

\address{Institute of Mathematics, Czech Academy of Sciences, \v{Z}itn\'a 25, 115 67 Prague, Czech Republic}
\email{strung@math.cas.cz}

\date{\today}
\subjclass[2020]{46L35, 46L85, 37B05, 37B10}
\keywords{locally expanding maps, purely infinite C*-algebras, classification of nuclear \mbox{$\mathrm{C}^{*}$-algebras}}
\thanks{RJD was partially supported by NSF Grant DMS 2247424 and Simons Foundation Gift MP-TSM-00002896. KRS is currently funded by GA\v{C}R project GF25-15403K and \mbox{RVO: 67985840}. IFP is supported in part by an NSERC Discovery Grant.}

\begin{document}

\begin{abstract} 
We generalize the concept of orbit-breaking from the case of a homeomorphism to the case of a surjective local homeomorphism. This method creates open subgroupoids of the Deaconu--Renualt groupoid associated to the surjective local homeomorphism. When the map satisfies natural assumptions (including being locally expanding) the resulting groupoid $\mathrm{C}^*$-algebras are unital UCT Kirchberg algebras. A specific class of Deaconu--Renault systems are constructed, each modelling the Cuntz algebra $\mathcal{O}_2$. The $\mathrm{C}^*$-algebras obtained from orbit-breaking for such systems come with explicit embeddings into $\mathcal{O}_2$. We also introduce a method for orbit-breaking after removing a fixed point. Using this method, we can realize all stable UCT Kirchberg algebras with each coming with an embeddings into $\mathcal O_2\otimes\mathcal K$. Moreover, we show that, given a homomorphism between countable graded abelian groups, there is a $^*$-homomorphism between orbit-breaking algebras obtained from a groupoid inclusion. In particular, we obtain explicit $^*$-homomorphisms between stable UCT Kirchberg algebras from any group homomorphism between their $K$-groups.
\end{abstract}

\maketitle

\section*{Introduction} \label{Sect:Intro}

The Elliott classification program seeks to classify, up to isomorphism, unital, simple, separable, nuclear $\mathrm{C}^*$-algebras via the \emph{Elliott invariant} which consists of $K$-theory and tracial data. The celebrated classification theorem says that this is possible with the additional assumptions that the $\mathrm{C}^*$-algebras are $\mathcal Z$-stable (that is, tensorially absorb the Jiang--Su algebra $\mathcal Z$) and satisfy the Universal Coefficient Theorem (UCT). We will call $\mathrm{C}^*$-algebras which are unital, simple, separable, nuclear, $\mathcal Z$-stable and satisfy the UCT \emph{classifiable}. For an overview of the classification program, we refer the reader to \cite{white2023a, Str:Book}, or the introduction to \cite{CGSTW}.

One of the most pleasing aspects of the theory of $\mathrm{C}^*$-algebras is the way in which other mathematical structures---dynamical systems, graphs, groups, to name a few---can be encoded by $\mathrm{C}^*$-algebras. Since the establishment of the classification theorem, an important goal has been to understand when these $\mathrm{C}^*$-algebras are classifiable. When they are, one may then ask how the Elliott invariant reflects the underlying structure and which invariants can be realized by the given class of constructions. The $\mathrm{C}^*$-algebras arising from topological dynamical systems have, in particular, been of longstanding interest in operator algebras, and can give beautiful examples of classifiable $\mathrm{C}^*$-algebras. 

Kirchberg's dichotomy theorem \cite{Kir:ICM, Phillips:Class} shows that any unital simple exact $\mathrm{C}^*$-algebra which is a tensor product of two infinite-dimensional $\mathrm{C}^*$-algebras is either purely infinite or stably finite. In particular, by virtue of being nuclear and $\mathcal Z$-stable, this is the case for classifiable $\mathrm{C}^*$-algebras. Previous work of the present authors has focused on constructing stably finite classifiable $\mathrm{C}^*$-algebras from free minimal actions of the group of integers by homeomorphisms via crossed products and orbit-breaking algebras \cite{DPSmain}. Although there exist crossed products of this form that are not $\mathcal Z$-stable \cite{GiolKerr},  many such crossed products are classifiable. Moreover, explicit constructions allow for a variety of different invariants to be realized, meaning that many classifiable $\mathrm{C}^*$-algebras admit crossed product models. However, there remain obstructions to realizing \emph{all} classifiable $\mathrm{C}^*$-algebras this way. There are two immediate issues: First, there are $K$-theory obstructions, as the $K_1$-group of such a crossed product is always non-trivial. Second, the crossed product is always stably finite, whereas classifiable $\mathrm{C}^*$-algebras can be both stably finite and purely infinite.

To deal with the $K$-theory obstruction one can use an orbit-breaking construction to produce \'etale groupoids---and in turn, $\mathrm{C}^*$-algebras---that are still very much related to the underlying dynamics, but can realise a wider range of $K$-theory \cite{DPSmain}. Originally introduced by the second author in the study of the $\mathrm{C}^*$-algebras of Cantor minimal systems \cite{MR1194074}, they have become interesting objects of study in their own right. 

However, orbit-breaking $\mathrm{C}^*$-subalgebras of crossed products (by actions of the group of integers or, more generally, by an amenable
group) always admit tracial states, hence are always stably finite. The main purpose of this paper is to address this second issue, that is, to realize purely infinite $\mathrm{C}^*$-algebras.

 To construct purely infinite $\mathrm{C}^*$-algebras, we introduce a version of orbit-breaking that does not begin with a homeomorphism, but rather a surjective local homeomorphism $\varphi : X \to X$ on an infinite compact metric space $X$. From this we construct the Deaconu--Renault groupoid \cite{MR1233967, MR1770333} and its groupoid $\mathrm{C}^*$-algebra.  When $\varphi$ satisfies natural assumptions (including being locally expanding, see Proposition \ref{purelyInfProp}) then the $\mathrm{C}^*$-algebra is simple, separable, unital, nuclear, and, unlike the crossed products by minimal homeomorphisms, purely infinite \cite{A-D:PI}. The resulting orbit-breaking algebras are also purely infinite. We point out that this is in interesting contrast to the orbit-breaking constructions considered by Kettner, where the Cuntz--Pimsner algebra itself is purely infinite, but the orbit-breaking subalgebra is stably finite~\cite[Theorem~6.13]{Kettner2027}. 

 Compared to orbit-breaking for minimal homeomorphisms, in the present setting, the orbit-breaking construction is more subtle and introduces several additional complications. For example, it is not immediately obvious what are the correct notions of the ``forward'' and ``backward'' orbit of a point, or more generally a closed subset. Furthermore, unlike in the case of a minimal homeomorphism, as noted above, the local homeomorphisms we consider necessarily contain eventually periodic points (and in fact, typically fixed points). In this case, extra care is required when orbit-breaking: it is not enough that a closed subset meets every orbit at most once, it must also avoid eventually periodic points. With these conditions in place, the resulting \'etale subgroupoid gives rise to a unital UCT Kirchberg algebra.

 Once the set-up for orbit-breaking has been established, to realize classifiable invariants, we proceed in an analogous manner to the case of orbit-breaking for minimal homeomorphisms \cite[Section 6]{DPSmain}. More precisely, using results of the second author, we construct a system $(X, \varphi)$ whose groupoid $\mathrm{C}^*$-algebra is isomorphic to $\mathcal O_2$, so in particular has trivial $K$-theory. Moreover, we are able to arrange that $X$ has connected components of arbitrarily large finite dimension. This allows us to embed any finite-dimensional compact metric space $Y$ in a way that permits orbit-breaking at its image in $X$. In this way we construct unital UCT Kirchberg algebras that have $\mathbb Z$ as a direct summand in $K_0$ (we also lack control of the order of the unit in the $K_0$-group).

We are also able to orbit-break at locally compact spaces to realize non-unital $\mathrm{C}^*$-algebras. This is done by embedding the one-point compactification of a locally compact non-compact set $Y$ into $X$ in such a way that $Y$ contains no eventually periodic points, meets every orbit at most once, and the point at infinity is mapped to a fixed point. After removing the fixed point from the system, we break orbits at $Y$. This orbit-breaking process lead to the construction of all stable UCT Kirchberg algebra.

Although our construction is completely novel, it is not the first construction of groupoid models for UCT Kirchberg algebras. Indeed, there have been many such constructions, beginning with Spielberg's graph based models \cite{Spielberg:Kirch}, followed by Katsura's realization via topological graph $\mathrm{C}^*$-algebras \cite{KatsuraPI}. In a purely groupoid setting,  Orloff Clark, Fletcher, and an Huef show that UCT Kirchberg algebras have ample groupoid models \cite{ClarkFletcheranHuef}. More recently, Wu models Kirchberg algebras via directed graphs of groups \cite{Wu:KirchModels}, while Evington and Sibbel revisit topological graph $\mathrm{C}^*$-algebras  to construct principal groupoid models \cite{EvingtonSibbel1,EvingtonSibbel2}.

Among these constructions, the orbit-breaking technique is especially satisfying as the orbit-breaking subalgebras arise from open subgroupoids of the groupoid models for $\mathcal O_2$ and $\mathcal O_2 \otimes \mathcal K$, and, as such, the resulting UCT Kirchberg algebras come equipped with explicit embeddings into $\mathcal O_2$ in the unital case and $\mathcal O_2 \otimes \mathcal K$ in the stable case. In fact, we can do even better than this: given any group homomorphism between $\mathbb{Z}/2\mathbb{Z}$-graded countable abelian groups, we produce maps between the stable UCT Kirchberg algebras with those $K$-theory groups, which arise from open groupoid inclusions. In other words, we
are able to realize morphisms of the invariants at the groupoid level.

The paper is structured as follows. In Section~\ref{Sect:Prelim}, we recall some properties of \'etale groupoids before specializing to the Deaconu--Renault groupoid arising from a surjective local homeomorphism. We look at the specific example of a one-sided shift of finite type given by a directed graph. Next, we recall the construction of binary factors of shifts of finite type, due to the second author, which will allow us to construct a model of $\mathcal O_2$, having a Cartan subalgebra whose spectrum is not a Cantor set, in which we perform orbit-breaking. In Section~\ref{sec:OBDR}, we introduce orbit-breaking subgroupoids and prove the existence of a six-term exact sequence which will allow us to compute $K$-theory. Section~\ref{sec:lcgpi} proves that under natural conditions on the map (including being locally expanding) the orbit-breaking subalgebras of the Deaconu--Renault groupoids are simple and purely infinite, hence UCT Kirchberg algebras. We construct models for $\mathcal O_2$ in Section~\ref{sec:OBO2}. With a view towards embedding arbitrary finite-dimensional metric spaces, these models are required to have connected components of arbitrarily large dimension. In the final two sections, we apply the orbit-breaking method to the systems from Section~\ref{sec:OBO2}. In particular, we realize all stable UCT Kirchberg algebras via orbit-breaking.

\subsubsection*{Acknowledgments} This project was initiated at the 2024 BIRS workshop \emph{Cartan Subalgebras in Operator Algebras, and Topological Full Groups}. The authors thank the organizers and the staff at BIRS. RJD and KRS wish to thank the University of Victoria for their visit in November 2024. KRS is grateful to the University of Colorado Boulder for visits in 2024, 2025, and 2026. Further work on the project was done at the NSF/CBMS Regional Conference at Texas Christian University in June 2025, and at the Canadian Operator Symposium (COSy) in June 2026.

\subsubsection*{AI statement} GPT-5.6 Sol was used to assist with preparing the bibliography. No LLMs were used for the mathematical arguments, which were solely produced by the authors.

\section{Preliminaries} \label{Sect:Prelim}

In this section, we set notation and gather the necessary preliminaries on \'etale groupoids, in particular Deaconu--Renault groupoids, as well as dynamical systems associated to local homeomorphisms. We also recall some of the results of \cite{MR4700629} we will need for the construction of our groupoid model of $\mathcal O_2$. 

\subsection{Groupoids} For a good introduction to \'etale groupoids and their $\mathrm{C}^*$-algebras, we refer the reader to Nekrashevych \cite{MR4475129}, Renault's book \cite{MR584266}, and the chapter by Sims in \cite{MR4321941}.

Suppose $\mathcal{G}$ is a second countable locally compact \'etale groupoid. The unit space of $\mathcal{G}$ is denoted by $\mathcal{G}^{(0)}$. 
\begin{definition} \label{def:Gset}
A \emph{$\mathcal{G}$-set} (sometimes called a \emph{bisection}) is a subset $S \subset \mathcal G$  where the restriction of the range and source maps to $S$ are homeomorphisms onto their images.  Given a $\mathcal G$-set $S$, the map
\[
\alpha_S: r(S) \rightarrow s(S) \hbox{ defined via }x \mapsto s(\gamma)
\]
is a homeomorphism where $\gamma$ is the (unique) element in $S$ with $r(\gamma)=x$.
\end{definition}

Note that if $S$ is a $\mathcal G$-set, then so is $S^{-1}=\{ \gamma^{-1} \mid \gamma \in S \}$. Moreover, the map $\alpha_S^{-1}$ is equal to $ \alpha_{S^{-1}}$.

\begin{definition} \label{def:Gorbit}
Given $x \in \mathcal{G}^{(0)}$, the groupoid orbit of $x$ is the set $r(s^{-1}(x))$.
\end{definition}

The next lemma is a well-known result, see for example \cite[page 105]{MR2648649}.

\begin{lemma}
Suppose that $\mathcal{G}_1$ and $\mathcal{G}_2$ are amenable \'etale groupoids. Then $\mathcal{G}_1 \times \mathcal{G}_2$ is an amenable \'etale groupoid and $C^*(\mathcal{G}_1\times \mathcal{G}_2) \cong C^*(\mathcal{G}_1)\otimes C^*(\mathcal{G}_2)$.
\end{lemma}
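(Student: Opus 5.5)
The plan has three parts: show the product is étale, show it is amenable, and identify its $\mathrm{C}^*$-algebra with the tensor product.

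\textbf{The product is étale.} The product $\mathcal{G}_1\times\mathcal{G}_2$ carries the product topology and componentwise operations. Its unit space is $\mathcal{G}_1^{(0)}\times\mathcal{G}_2^{(0)}$ and its range and source maps are $r_1\times r_2$ and $s_1\times s_2$. A product of local homeomorphisms is a local homeomorphism, so $r$ and $s$ are local homeomorphisms. Second countability and local compactness pass to products. Hence $\mathcal{G}_1\times\mathcal{G}_2$ is a second countable locally compact étale groupoid. Moreover, products $S_1\times S_2$ of open bisections form a basis of open bisections.

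\textbf{Amenability.} I would use topological amenability in the form of approximate invariant means: nets $m^i_\lambda\in C_c(\mathcal{G}_i)$ with $\sum_{g\in\mathcal{G}_i^x}|m^i_\lambda(g)|^2\to 1$ and
\[
\sum_{h}\bigl|m^i_\lambda(g^{-1}h)-m^i_\lambda(h)\bigr|^2\to 0
\]
uniformly on compact sets. Set $m_\lambda=m^1_\lambda\otimes m^2_\lambda$, viewed as an element of $C_c(\mathcal{G}_1\times\mathcal{G}_2)$. Then the fibre sums factor as products. Writing $m(gh)-m(h)$ as a telescoping sum of two differences, the triangle inequality gives uniform convergence on compact subsets of products. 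Since every compact set is contained in a product of compacts, this establishes amenability.

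\textbf{The tensor product isomorphism.} Amenability gives $C^*=C^*_r$ for both factors and for the product, and $C^*(\mathcal{G}_1)$ is nuclear, so the tensor norm is unique. Define $C_c(\mathcal{G}_1)\odot C_c(\mathcal{G}_2)\to C_c(\mathcal{G}_1\times\mathcal{G}_2)$ by $f_1\otimes f_2\mapsto f_1\times f_2$. A direct computation shows this is a $*$-homomorphism for convolution. Its image is dense in the inductive limit topology by Stone--Weierstrass, since it separates points and is closed under the relevant operations; this can be checked locally on products of bisections. For the reduced norms, take units $x=(x_1,x_2)$. The left regular representation on $\ell^2(\mathcal{G}^x)=\ell^2(\mathcal{G}_1^{x_1})\otimes\ell^2(\mathcal{G}_2^{x_2})$ is $\lambda_{x_1}\otimes\lambda_{x_2}$. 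Taking the supremum over $x$ shows the reduced norm on the algebraic tensor product equals the spatial (minimal) tensor norm. The map therefore extends to an isometric $*$-isomorphism $C^*_r(\mathcal{G}_1)\otimes C^*_r(\mathcal{G}_2)\to C^*_r(\mathcal{G}_1\times\mathcal{G}_2)$. Amenability then removes the subscript $r$.

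\textbf{Main obstacle.} The delicate point is the norm identification: the supremum over $x$ of $\|\lambda_{x_1}\otimes\lambda_{x_2}(a)\|$ must be the minimal norm. This holds because $\bigoplus_{x_1}\lambda_{x_1}$ is faithful on $C^*_r(\mathcal{G}_1)$, similarly for $\mathcal{G}_2$, and the spatial norm is independent of the choice of faithful representations. The density step is routine.
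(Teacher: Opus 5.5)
Your proposal is correct, but it does not follow the paper's route, because the paper gives no proof: it cites the lemma as well known, pointing to page 105 of one of its references.

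What you supply is a self-contained verification with two parts:
\begin{itemize}
\item \emph{Amenability.} You take the product of approximate invariant means, $m^1_\lambda\otimes m^2_\lambda$.
\item \emph{The isomorphism.} You identify $\lambda_{(x_1,x_2)}\circ\Phi$ with $\lambda_{x_1}\otimes\lambda_{x_2}$ on $\ell^2(\mathcal{G}_1^{x_1})\otimes\ell^2(\mathcal{G}_2^{x_2})$, where $\Phi$ is your map $f_1\otimes f_2\mapsto f_1\times f_2$.
\end{itemize}

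A few details you glossed over do hold:
\begin{itemize}
\item \emph{Indexing.} Index the product net by the product directed set.
\item \emph{Telescoping estimate.} You need $\|m^2_\lambda\|_{\ell^2(\mathcal{G}_2^{s(g_2)})}$ to stay bounded for $g_2$ in a compact set $K_2$. This follows from the uniform convergence of the fibre norms on the compact set $s(K_2)$.
\item \emph{Surjectivity.} Apply Stone--Weierstrass directly on $\mathcal{G}_1\times\mathcal{G}_2$, then multiply by a cutoff $\chi_1\otimes\chi_2$. This keeps the approximants supported in a fixed compact set. Such approximation implies reduced-norm approximation, since $\|f\|_r\le\|f\|_I\le N\|f\|_\infty$ whenever $\mathrm{supp}\, f$ is covered by $N$ open bisections.
\end{itemize}

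Compared with the citation, your route buys transparency and a slightly sharper statement. The norm computation never uses amenability, so $C^*_r(\mathcal{G}_1\times\mathcal{G}_2)\cong C^*_r(\mathcal{G}_1)\otimes_{\min}C^*_r(\mathcal{G}_2)$ holds for arbitrary second countable Hausdorff \'etale groupoids. Amenability is needed only to pass from reduced to full algebras and to make the tensor norm unambiguous, which is exactly the form in which the paper uses the lemma.
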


Recall that when a groupoid is amenable, the full and reduced groupoid $\mathrm{C}^*$-algebras coincide and are nuclear. Thus the choice of completion of the groupoid $\mathrm{C}^*$-algebras, as well as the choice of norm on the tensor product, are not indicated.

 Given a locally compact \'etale groupoid $\mathcal G$ and an open set $U \subset \mathcal G^{(0)}$, the \emph{reduction} of $\mathcal G$ to $U$ is given by
\[ \mathcal G|_U  := \{ \gamma \in \mathcal G \mid s(\gamma) \in U \text{ and } r(\gamma) \in U \}.\]
Notice that $\mathcal G|_U$ is a locally compact \'etale groupoid with unit space $\mathcal G|_U^{(0)} \cong U$ and multiplication and inverse given by restriction. This makes $\mathcal G|_U$ into an open \'etale subgroupoid of $\mathcal G$.

\subsection{Deaconu--Renault groupoids} 

Let $X$ be a compact metric space and $\varphi: X \rightarrow X$ be a surjective local homeomorphism. With a small abuse of notation, we define, for $x \in X$ and $l \in \mathbb{N}$,
\[
\varphi^{-l}(x) = \{ y \in X \mid \varphi^{l}(y) = x\}.
\]
 The backward orbit of a point $x \in X$ is given by 
\[ \orb^-(x) := \{ y \in X \mid \varphi^k(y) = x \text{ for some } k \geq 0\},
\]
while the forward orbit is
\[ \orb^+(x) := \{ \varphi^k(x) \mid k \geq 1 \}.\]
Note that our convention includes $x$ as a point in its backward orbit, rather than the its forward orbit. Unlike the case of a homeomorphism, the correct notion of the orbit of a point is not simply the union of forward and backward orbits, as we require that if $y$ is in the orbit of $x$, then the orbits of $x$ and $y$ should coincide. However, if $y$ is in the forward orbit of $x$, $\orb^+(x) \cup \orb^-(x)$ will not necessarily contain $\orb^-(y)$. Thus the correct notion of the (generalized) orbit of $x$ is given by the set
\[ \orb(x) :=\bigcup_{l, k \in \N\cup\{0\}} \varphi^{-l}(\varphi^k(x)).
\]
Equivalently, the orbit of $x$ is the increasing union 
\[
\orb(x) = \bigcup_{k \geq 0} \orb^-(\varphi^k(x)).
\]
Note that if $\varphi$ is a homeomorphism $\varphi^{-1}(x)$ contains only one element  for every $x\in X$ and this is just the usual notion of orbit. However, for the maps considered below, we will often have $x \in X$, such that the preimage set $\varphi^{-1}(x)$ contains at least two points. In fact, for the specific maps we construct, this property will hold for all $x\in X$. Indeed, the systems constructed in the proof of Theorem \ref{thm:dimO2} have this property. Example \ref{ex:OnePreSFT} gives a system that does not have this property.

We say that $x\in X$ is \emph{eventually periodic} if there exists $n \neq m > 0$ such that $\varphi^n(x)=\varphi^m(x)$. This is equivalent to the forward orbit of $x$ being finite. The set of eventually periodic points is denoted by $X_{{\rm ep}}$.

\begin{example} \label{ex:twofold} 
Let $X=S^1$ be the unit circle in the complex plane and $\varphi: S^1 \rightarrow S^1, z \to z^2$ be the two-fold covering map. Then $x$ is an eventually periodic point if and only if $x$ is a root of unity. Equivalently, identifying $S^1$ with $\mathbb R/\mathbb Z$ via $t\mapsto e^{2\pi i t}$, the map becomes
\[
\varphi:\mathbb R/\mathbb Z\to\mathbb R/\mathbb Z,\qquad z\mapsto 2z \pmod{\mathbb Z},
\]
and the eventually periodic points are precisely the elements of $\mathbb Q/\mathbb Z$. See Figure~\ref{fig:GenOrbit} for the generalized orbit of a point $z \in S^1$ which is not eventually periodic. \end{example}

\begin{figure}[ht]
    \centering
    \includegraphics[width=0.8\textwidth]{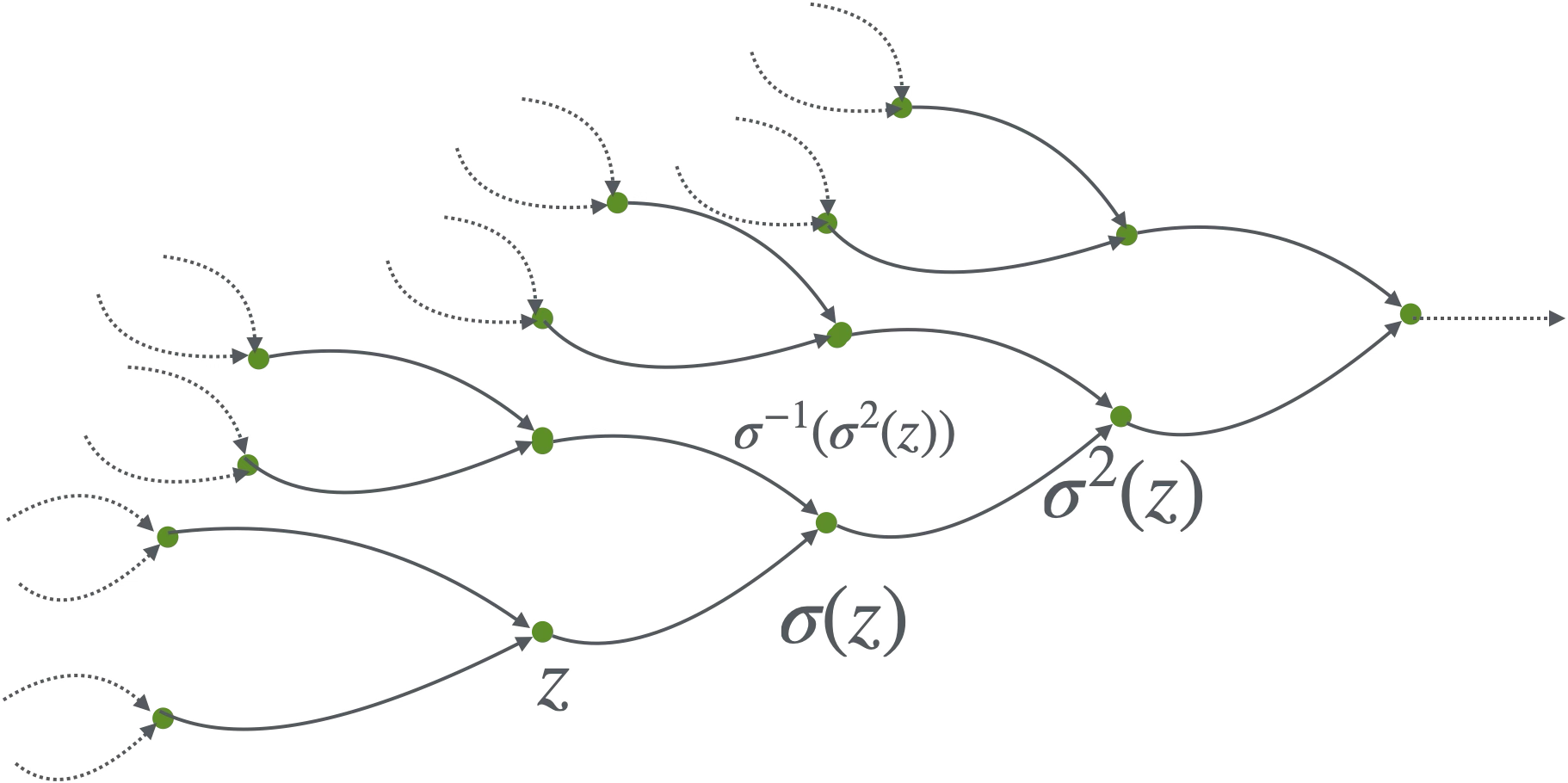}
    \caption{Generalized orbit of $z \in S^1$ that is not a root of unity under the covering map $\varphi : S^1 \to S^1,  z \mapsto z^2$.}
    \label{fig:GenOrbit}
\end{figure}

The \emph{Deaconu--Renault groupoid} \cite{MR1233967, MR1770333} associated to $(X, \varphi)$ is given by
\[
\mathcal{G} = \{ (x, m-n, z) \in X\times \Z \times X \mid m,n \in \N \hbox{ and }\varphi^m(x)=\varphi^n(z) \}.
\]
Two elements $(x, p, y), (w, q, z) \in \mathcal{G}$ can be composed if $y=w$, in which case
\[
(x,p,y)\cdot (y, q, z)=(x, p+q, z).
\]
The unit space is $\mathcal G^{(0)} = \{ (x, 0, x) \mid x\in X\}$, which is identified with $X$. The range and source maps are given respectively by 
\[
r(x,p,z)=z \hbox{ and }s(x,p,z)=x.
\]
The groupoid $\mathcal{G}$ is \'etale with $\mathcal{G}$-sets obtained as follows. Given an element $(x_0, m-n, z_0) \in \mathcal{G}$, let $U$, $V$ be open subsets of $X$ such that 
\begin{enumerate}
\item $x_0\in U$ and $z_0\in V$,
\item $\varphi^m(U)=\varphi^n(V)$,
\item $\varphi^m|_U$ and $\varphi^n|_V$ are homeomorphisms onto their common image.
\end{enumerate}
Then $\{ (x, m-n, z) \mid x\in U \hbox{ and }z\in V \hbox{ with }\varphi^m(x)=\varphi^n(z) \}$ is a $\mathcal{G}$-set. 

It is worth noting that the orbit of a point $x \in \mathcal G^{(0)}$ in the groupoid sense (Definition~\ref{def:Gorbit}) is the same as the set $\orb(x)$ defined at the start of this section. That is, 
\[
r ( s^{-1}(x, 0, x)) =  \orb(x)=\bigcup_{l, k \in \N\cup\{0\}} \varphi^{-l}(\varphi^k(x)).
\]

\begin{proposition}	\label{prop:GUopen}
Let $X$ be a compact metric space and $\varphi: X \to X$ a continuous, open surjection and $w \in X$  a fixed point. Denote by $\mathcal G$ the associated Deaconu--Renault groupoid and let $U := X \setminus \{w\}$. Then $\mathcal G|_U$ is an amenable \'etale groupoid. Moreover, $C^*(\mathcal G|_U)$ is a hereditary $\mathrm C^*$-subalgebra of $C^*(\mathcal G)$.
\end{proposition}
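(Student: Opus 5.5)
The plan is to derive everything from general facts about reductions of étale groupoids to open subsets of the unit space. The argument has four steps.

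\emph{Step 1: $U$ is open.} Points of $X$ are closed, so $U = X\setminus\{w\}$ is open in $X \cong \mathcal G^{(0)}$. Since $\varphi$ is a continuous open surjection (a local homeomorphism in our setting), $\mathcal G$ is a second countable locally compact étale groupoid, using the $\mathcal G$-sets described above. Hence, as noted before the proposition, $\mathcal G|_U = r^{-1}(U)\cap s^{-1}(U)$ is an open subgroupoid of $\mathcal G$. It is étale with unit space $U$, and locally compact because it is an open subset of a locally compact Hausdorff space.

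\emph{Step 2: amenability.} The Deaconu--Renault groupoid $\mathcal G$ is amenable; this is the standard result of \cite{MR1770333}, which one can also see by writing $\mathcal G$ through the cocycle $c(x,p,z)=p$, whose kernel is an increasing union of proper (elementary) groupoids. Amenability passes to open subgroupoids, and in particular to reductions to open subsets of the unit space (Anantharaman-Delaroche--Renault). Concretely, an approximate invariant mean on $\mathcal G$ restricts to $\mathcal G|_U$; equivalently, the kernel of $c$ restricted to $U$ is still an increasing union of proper groupoids and $\mathbb Z$ is amenable. I expect this step to be the only point needing care, and I would simply cite the permanence result rather than reprove it.

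\emph{Step 3: $C^*(\mathcal G|_U)$ sits inside $C^*(\mathcal G)$.} Extension by zero gives an inclusion $C_c(\mathcal G|_U)\subset C_c(\mathcal G)$, which is a $*$-subalgebra because $\mathcal G|_U$ is an open subgroupoid. For open subgroupoids this inclusion extends isometrically on reduced algebras. Since both groupoids are amenable, full and reduced algebras agree, so we obtain a $\mathrm C^*$-subalgebra $C^*(\mathcal G|_U)\subseteq C^*(\mathcal G)$.

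\emph{Step 4: hereditary.} Identify $C^*(\mathcal G|_U)$ with $\overline{C_0(U)\,C^*(\mathcal G)\,C_0(U)}$, where $C_0(U)\subset C(X)=C(\mathcal G^{(0)})$. The inclusion of the left side in the right side is immediate. For the reverse, take $a\in C_0(U)$, $f\in C_c(\mathcal G)$ and $b\in C_0(U)$. Then
\[
(afb)(\gamma)=a(r(\gamma))\,f(\gamma)\,b(s(\gamma)),
\]
which is supported in $r^{-1}(U)\cap s^{-1}(U)=\mathcal G|_U$. If $a,b$ have compact support in $U$, this function is compactly supported in $\mathcal G|_U$, so it lies in $C_c(\mathcal G|_U)$. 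By density, the right side is contained in $C^*(\mathcal G|_U)$.

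Finally, a set of the form $\overline{C\,A\,C}$ with $C$ a $\mathrm C^*$-subalgebra of $A$ is a hereditary subalgebra: it equals $\overline{eAe}$ for a strictly positive element $e$ of the $\sigma$-unital algebra $C_0(U)$, and $\overline{eAe}$ is hereditary. This gives the stated conclusion.
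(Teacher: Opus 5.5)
Your proposal is correct and is essentially the paper's argument: $\mathcal G|_U$ is open, hence \'etale. Amenability follows because it passes to open subgroupoids of the amenable Deaconu--Renault groupoid. Heredity comes from a strictly positive element $a\in C_0(U)$ (the paper takes $a(w)=0$ and $a>0$ elsewhere), giving $C^*(\mathcal G|_U)=\overline{a\,C^*(\mathcal G)\,a}$. You also make explicit two things the paper leaves implicit: the isometric embedding $C^*(\mathcal G|_U)\subseteq C^*(\mathcal G)$, and the reverse inclusion $C_c(\mathcal G|_U)\subseteq \overline{C_0(U)C^*(\mathcal G)C_0(U)}$.

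The one soft spot is your aside that an approximate invariant mean on $\mathcal G$ simply restricts to $\mathcal G|_U$. Restriction can lose fibrewise mass on arrows involving $w$, and the restricted functions need not stay compactly supported in $\mathcal G|_U$. Rely on the cited permanence result or your cocycle argument instead.
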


\begin{proof}
    The reduction $\mathcal G|_U$ is an open subgroupoid of the \'etale groupoid $\mathcal G$, so it is also \'etale. It is well known that $\mathcal{G}$ is amenable (see for example \cite[Example 10.1.12]{MR4321941}) and open subgroupoids of amenable groupoids are amenable (see for example, \cite[Proposition 10.1.14]{MR4321941}). Let $a \in C(\mathcal G^{(0)})$ be any function satisfying $a (w) = 0$ and $a(x) >0$ for every $x \neq w$. Then, for any $f \in C_c(\mathcal G)$ we have $a f a(\gamma) = a(r(\gamma))f(\gamma) a(s(\gamma)) = 0$ unless $r(\gamma), s(\gamma) \in U$. Thus $C^*(\mathcal G|_U)$ is a hereditary $\mathrm{C}^*$-subalgebra.
\end{proof}

\subsection{Shift spaces and locally expanding maps}

To construct the dynamical system of Theorem~\ref{thm:dimO2}, some background from \cite{MR4700629} is required. The starting point is shifts of finite type, which are a special case of locally expanding maps. The book \cite{LindMarcus:SDandCod} contains a detailed introduction to shifts of finite type. 

Any shift of finite type is topologically conjugate to an edge shift on a finite directed graph \cite[Theorem 2.3.2]{LindMarcus:SDandCod}.  A directed graph is denoted by $E =(E^0, E^1, i, t)$ where $E^0$ is the set of vertices, $E^1$ is the set of edges, and $i$ and $t$ are the source and range maps respectively.  The adjacency matrix of $E$ is denoted by $A_E$ or just $A$ if we are only discussing a single graph.  

A graph $E$ with adjacency matrix $A_E$ is \emph{primitive} if there is a positive integer $k$ such that $A_E^k(v,w)$ is positive for all $v,w \in E^0$ where $A_E^k(v,w)$ denotes the $(v,w)$-entry of the $k^{\text{th}}$ power of $A_E$.

Let $E$ be a finite directed graph with adjacency matrix $A$. The associated one-sided shift space is given by 
\[ \Sigma^+_A= \{ (e_n)_{i\in \N} \in (E^1)^\mathbb{N} \mid t(e_n)=i(e_{n+1}) \}, \]
together with the left shift
\[ \sigma_+((e_n)_{n\in \N}) = (e_{n+1})_{n \in \mathbb N}. \]
The one-sided shift is a surjective local homeomorphism.

\begin{definition} \label{def:irred}
Suppose $X$ is a compact metric space and $\varphi: X \rightarrow X$ is a continuous, open, surjection. Then $(X, \varphi)$ is
\begin{enumerate}
\item \emph{irreducible} if, for each ordered pair of non-empty open sets $U$ and $V$, there exists $n\in \N$ such that $\varphi^n(U) \cap V$ is non-empty, and 
\item \emph{mixing} if, for each ordered pair of non-empty open sets $U, V \subset X$ there is $N \in \mathbb N$ such that $\varphi^n(U) \cap V \neq \emptyset$ for every $n \geq N$.
\end{enumerate}
\end{definition}
It is worth noting that mixing implies irreducible, but the other direction does not hold. 
\begin{definition} \label{def:ExpMap}
Suppose $X$ is a compact metric space and $\varphi: X \rightarrow X$ is a continuous, open, surjection. Then $\varphi$ is
\begin{enumerate}
    \item  \emph{expanding} if there exists $\gamma>0$ such that if $d(\varphi^n(x), \varphi^n(z))< \gamma$ for all $n\in \N\cup\{0\}$, then $x=z$,
    \item \emph{locally expanding} if there exist constants $\delta_X>0$ and $\lambda_X>1$ such that $\lambda_X d(x, z) \le d(\varphi(x), \varphi(z)),$
for any $x, z \in X$ with $d(x,z)<\delta_X$.
\end{enumerate}
\end{definition}

It follows from work of Reddy \cite{MR651514} that an expanding map $\varphi$ is a local homeomorphism if it is an open map. Reddy \cite{MR651514} also proved that if $\varphi$ is expanding, then there exists a metric $d$ on $X$ (giving the same topology as the original metric) for which $\varphi$ is locally expanding. 

The dynamical systems we consider in the present paper are always open maps. In Section \ref{sec:OBDR} we will only assume the map is a surjective, local homeomorphism. Then, starting in Section \ref{sec:lcgpi} we will assume the map is also locally expanding. Based on Reddy's result one can apply our results to expanding maps (rather than locally expanding maps) by changing the metric on the space as in Reddy's paper \cite{MR651514}. Even later, in Sections \ref{sec:OBO2} to \ref{sec:OBO2LC}, we will only be considering the specific systems constructed in \cite{MR5002161} (which are surjective, open, locally expanding, and have other nice properties).

In particular, the natural solenoid associated to $(X, \varphi)$ where $\varphi$ is open and (locally) expanding is a Smale space \cite[Section 7.26]{Rue:ThermForm}. In the present paper, Smale space theory is only used in the proofs of Theorems \ref{thm:PutSys} and \ref{thm:productDR}, which only involve the specific systems constructed in \cite{MR5002161}. The interested reader can also see Remark \ref{rem:irr-mix} for how Smale space theory is used.

The two-fold self-cover of the circle discussed in Example \ref{ex:twofold} is an example of an open locally expanding map where the system is also irreducible and also mixing. All one-sided shifts of finite type are locally expanding and open. The irreducibility of a shift of finite type obtained from a (finite, directed) graph is easy to determine from the graph. In particular, the shift is irreducible if and only if, for each ordered pair of vertices, $v$ and $w$, there exists a finite path from $v$ to $w$. One also has that a one-sided edge shift is mixing if and only if its adjacency matrix is primitive. 

\begin{lemma} \label{lem:denseBack}
Suppose that $(X, \varphi)$ is irreducible, open, and locally expanding. Then, for every $x \in X$, the backward orbit $\orb^-(x)$ is dense in $X$. 
\end{lemma}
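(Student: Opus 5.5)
To prove Lemma~\ref{lem:denseBack}, fix $x \in X$ and a non-empty open set $V \subset X$. I will find $y \in V$ and $n \geq 0$ with $\varphi^n(y) = x$. The strategy is to show that, because $\varphi$ is locally expanding, the forward images of any small ball eventually cover a ball of a \emph{uniform} radius. Irreducibility is then used to push such uniform balls onto a neighbourhood of $x$.

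\textbf{Step 1 (uniform expansion of balls).} Let $\delta_X, \lambda_X$ be the constants from Definition~\ref{def:ExpMap}. I will show there is $\epsilon > 0$, depending only on $(X,\varphi)$, with the following property: for every $z \in X$ and $0 < r \leq \epsilon$,
\[
\varphi(B(z,r)) \supseteq B(\varphi(z), \min\{\lambda_X r, \epsilon\}).
\]
The argument runs as follows.
\begin{enumerate}[label=(\alph*)]
\item Since $\varphi$ is a local homeomorphism on a compact space, by a Lebesgue number argument there is $\epsilon_0 < \delta_X$ such that $\varphi$ is injective on every ball of radius $\epsilon_0$.
\item Since $\varphi$ is open and $X$ is compact, there is $\eta > 0$ such that $\varphi(B(z, \epsilon_0/2)) \supseteq B(\varphi(z), \eta)$ for all $z$. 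This uses the uniform openness of an open map on a compact space, which I would prove by contradiction with convergent subsequences.
\item On $B(z,\epsilon_0)$, the local inverse $\psi$ of $\varphi$ is defined on $\varphi(B(z,\epsilon_0/2))$ and contracts distances by the factor $\lambda_X^{-1}$.
\item Hence, for $w \in B(\varphi(z), \min\{\lambda_X r, \eta\})$, the point $\psi(w)$ satisfies $d(\psi(w), z) \leq \lambda_X^{-1} d(w, \varphi(z)) < r$.
\end{enumerate}
Taking $\epsilon = \eta$ (shrunk if necessary so that $\epsilon \leq \epsilon_0/2$) gives the claim. Iterating it, any ball $B(z,r)$ satisfies $\varphi^k(B(z,r)) \supseteq B(\varphi^k(z), \epsilon)$ once $\lambda_X^k r \geq \epsilon$. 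I expect this step to be the main obstacle: the delicate point is making the local inverses and the openness constant uniform in $z$, and the rest of the proof is soft.

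\textbf{Step 2 (using irreducibility).} Cover $X$ by finitely many balls $B(z_1,\epsilon/2), \dots, B(z_N,\epsilon/2)$. Given $V$, choose a small ball $B(z,r) \subset V$. By Step 1 there are $k$ and $i$ such that
\[
\varphi^k(V) \supseteq B(\varphi^k(z), \epsilon) \supseteq B(z_i, \epsilon/2).
\]
Next, choose $j$ with $x \in B(z_j, \epsilon/2)$. By irreducibility applied to the pair $B(z_i,\epsilon/2)$ and $B(z_j,\epsilon/2)$, there is $n$ with $\varphi^n(B(z_i,\epsilon/2)) \cap B(z_j,\epsilon/2) \neq \emptyset$. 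This alone only gives a point near $x$, not $x$ itself, so I refine the argument as follows. Pick $u \in B(z_i,\epsilon/2)$ with $\varphi^n(u) \in B(x, \epsilon/2)$; this is possible by applying irreducibility to $B(z_i,\epsilon/2)$ and $B(x,\epsilon/2)$ directly. Choose $s > 0$ so small that $B(u,s) \subset B(z_i,\epsilon/2)$, and then choose $m$ with $\lambda_X^m s \geq \epsilon$. Apply irreducibility once more to the pair $B(u,s)$ and $\varphi^{-m}(B(x,\epsilon/2))$; the latter is open and non-empty by surjectivity of $\varphi$. This produces $u' \in B(u,s)$ and $p$ with $\varphi^{p+m}(u')$ and $B(x, \epsilon/2)$ related as needed. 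To avoid circularity, it is cleaner to argue instead with the following statement: for every $z$ and $r$ there is $K$ with $\varphi^K(B(z,r)) \supseteq B(w,\epsilon)$ for some $w$ within $\epsilon/2$ of $x$. To prove this statement, apply irreducibility first to $B(z,r)$ and $\varphi^{-m}(B(x,\epsilon/2))$, where $m$ is chosen so that $\lambda_X^m r' \geq \epsilon$ for a small ball of radius $r'$ around the hitting point, and then expand using Step 1.

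\textbf{Step 3 (conclusion).} The resulting image $\varphi^K(V)$ contains $B(w,\epsilon)$, and since $d(w,x) < \epsilon/2$ this ball contains $x$. Hence some $y \in V$ has $\varphi^K(y) = x$, so $y \in \orb^-(x) \cap V$. Since $V$ was an arbitrary non-empty open set, $\orb^-(x)$ is dense in $X$.
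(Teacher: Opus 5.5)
Your route is genuinely different from the paper's. The paper first proves the statement for an irreducible one-sided shift of finite type, where it is just concatenation of paths. It then transfers it to $(X,\varphi)$ via the one-sided version of Bowen's theorem: a factor map $\pi$ from an irreducible SFT onto $X$ satisfies $\pi(\orb^-(x_+))\subseteq\orb^-(\pi(x_+))$, and a continuous surjection carries dense sets to dense sets. You argue directly with the metric instead, and your Step 1 is fine. Uniform openness follows from the compactness argument you sketch. The local inverse on $\varphi(B(z,\epsilon_0/2))$ is $\lambda_X^{-1}$-Lipschitz because $\epsilon_0<\delta_X$. Induction then gives $\varphi^k(B(z,r))\supseteq B(\varphi^k(z),\min\{\lambda_X^k r,\epsilon\})$.

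Step 2, however, does not close as written, and the trouble is exactly the circularity you notice but do not remove. In your final formulation, $r'$ is the radius of a ball about the hitting point that lies inside $B(z,r)$. But the hitting point is produced by irreducibility only after $m$ has been fixed, and it can lie arbitrarily close to the boundary of $B(z,r)$. So no such $r'$ is available when you choose $m$ with $\lambda_X^m r'\ge\epsilon$. Your earlier variant with $u'\in B(u,s)$ has the same defect.

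The repair is to make the radius uniform \emph{before} choosing $m$:
\begin{enumerate}
\item Pick $z,r$ with $r\le\epsilon$ and $B(z,2r)\subseteq V$.
\item Choose $m$ with $\lambda_X^m r\ge\epsilon$.
\item Apply irreducibility to $B(z,r)$ and the non-empty open set $\varphi^{-m}(B(x,\epsilon/2))$. This yields $n$ and $u\in B(z,r)$ with $d(\varphi^{n+m}(u),x)<\epsilon/2$.
\item Since $B(u,r)\subseteq V$ and $\lambda_X^{n+m}r\ge\epsilon$, Step 1 gives $x\in B(\varphi^{n+m}(u),\epsilon)\subseteq\varphi^{n+m}(V)$.
\end{enumerate}
The finite cover by balls $B(z_i,\epsilon/2)$ is not needed.

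With this fix your proof is elementary and self-contained. It uses only compactness, openness, local expansion and irreducibility, and it shows directly that $\bigcup_{n}\varphi^n(V)=X$ for every non-empty open $V$. The paper's proof is shorter, but it rests on the existence of a Markov coding of $(X,\varphi)$, the same machinery it reuses in Theorem~\ref{lem:Preimages}.
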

\begin{proof}
Consider first the special case of an irreducible one-sided shift of finite type, which we denote by $(\Sigma^+, \sigma_+)$. As mentioned above without loss of generality, we can assume that there is a finite directed graph, $E$, such that
\[ 
\Sigma^+ = \{ (e_1, e_2, \ldots ) \mid (e_1, e_2, \ldots) \hbox{ is a one sided infinite path in }E \},
\]
and $\sigma_+$ is the shift map
\[
(e_1, e_2, \ldots) \mapsto (e_2, \ldots ).
\]
Now, fix $x_+=(x_1, x_2, \ldots)$ and $w_+=(w_1, w_2, \ldots) \in \Sigma^+$ along with $K \in \N$. We must find $z_+=(z_1, z_2, \ldots ) \in \Sigma^+$ and $n\in \N$ such that 
\[
\sigma_+^n(z_+)=x_+ \hbox{ and }z_i=w_i \hbox{ for }i=1, \ldots, K.
\]
Since $(\Sigma^+, \sigma_+)$ is irreducible, there exists a finite path $a_1, a_2, \ldots, a_L$ such that $t(w_N)=i(a_1)$ and $t(a_L)=i(x_1)$. As such, 
\[ z_+=(w_1, \ldots , w_N, a_1, \ldots, a_L, x_1, x_2, \ldots ) \in \Sigma^+.
\]
One readily checks that this choice of $z$ has the two required properties. This completes the proof of the special case of irreducible one-sided shifts of finite type.

In the general case, by the one-sided version of Bowen's theorem (see \cite[Theorem 6.5]{Adler:SD} or \cite[Sections 29 and 30]{Rue:ThermForm}), there exists an irreducible one-sided shift of finite type $(\Sigma^+, \sigma_+)$ and a factor map $\pi : (\Sigma^+, \sigma_+) \rightarrow (X, \varphi)$. Let $x\in X$ and choose $x_+ \in \Sigma^+$ such that $\pi(x_+)=x$. Then, since $\pi \circ \sigma_+ = \varphi \circ \pi$, 
\[
\pi(\orb^-(x_+)) \subseteq \orb^-(x).
\]
Since $\pi$ is continuous, surjective, and $\orb^-(x_+)$ is dense in $\Sigma^+$, we have that $\orb^-(x)$ is dense in $X$.
\end{proof}

\begin{corollary} \label{cor:denseBackLC}
        Suppose that $(X, \varphi)$ is irreducible, open, surjective, locally expanding, and contains a fixed point, $w$. Then, for every $x \in X \setminus \{w\}$, the backward orbit of $x$ is dense in $X \setminus \{w\}$. 
\end{corollary}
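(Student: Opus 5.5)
The plan is to derive this directly from Lemma~\ref{lem:denseBack}, with one extra observation. The only difference from the lemma is the ambient space: there we had $X$, here we have $X \setminus \{w\}$. So the main point is that the backward orbit of $x \neq w$ never meets $w$.

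First I check that $\orb^-(x) \subseteq X \setminus \{w\}$ whenever $x \neq w$. Suppose $y \in \orb^-(x)$, so that $\varphi^k(y) = x$ for some $k \geq 0$. If $y = w$, then because $w$ is a fixed point, $x = \varphi^k(w) = w$, a contradiction. Hence $w \notin \orb^-(x)$.

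Next I apply Lemma~\ref{lem:denseBack}. Its hypotheses (irreducible, open, locally expanding) all hold here, so $\orb^-(x)$ is dense in $X$. Now take any nonempty set $V$ that is open in $X \setminus \{w\}$. Since $\{w\}$ is closed in the metric space $X$, the set $X \setminus \{w\}$ is open in $X$, and therefore $V$ is also open in $X$. By density, $V$ meets $\orb^-(x)$. Combined with the first step, $\orb^-(x)$ is a subset of $X \setminus \{w\}$ that meets every nonempty relatively open subset, which is exactly density in $X \setminus \{w\}$.

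I do not expect a real obstacle. The one point needing care is that a relatively open subset of $X\setminus\{w\}$ is open in $X$, which uses that points are closed in a metric space.
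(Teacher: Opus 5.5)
Your proposal is correct and is essentially the paper's argument: the paper's proof is the single line ``Immediate from Lemma~\ref{lem:denseBack},'' and you have written out the routine details it leaves implicit. Those details are that $w \notin \orb^-(x)$ because $w$ is fixed and $x \neq w$, and that relatively open subsets of the open set $X \setminus \{w\}$ are open in $X$.
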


\begin{proof}
    Immediate from Lemma~\ref{lem:denseBack}.
\end{proof}

Many locally expanding maps have the property that each point has at least two preimages (for example, the two-fold cover of the circle). However, this property is not completely general for the systems we are interested in. In Lemma \ref{lem:Preimages} we prove a weaker condition holds. Later this dynamical property will be used to show that the orbit-breaking groupoid is minimal and hence that the associated $\mathrm{C}^*$-algebra is simple. The  proof of the following lemma is straightforward and so omitted.  

\begin{lemma} \label{lem:PreimagesSFT}
    Let $A = (a_{ij})_{i,j}$ be the adjacency matrix of a finite directed graph with $n$ vertices and $(\Sigma^+, \sigma_+)$ the corresponding one-sided shift space. Given $(e_k)_{k\in \mathbb N} \in \Sigma^+$, we have
    \[ |\sigma_+^{-1}((e_k)_{k \in \mathbb N})| = \sum_{j=1}^n a_{ij},\]
    where $i = i(e_1)$.
\end{lemma}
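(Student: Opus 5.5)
The plan is to write down the preimage set explicitly and count it by its first coordinate. Fix $x_+=(e_k)_{k\in\mathbb N}\in\Sigma^+$ and put $i=i(e_1)$. Any $y_+=(f_k)_{k\in\mathbb N}$ with $\sigma_+(y_+)=x_+$ satisfies $f_{k+1}=e_k$ for all $k$. So $y_+$ is determined by its first edge $f_1$, and the map
\[
\sigma_+^{-1}(x_+)\to E^1,\qquad y_+\mapsto f_1,
\]
is injective.

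Next I identify the image of this map. A sequence $(f,e_1,e_2,\ldots)$ lies in $\Sigma^+$ exactly when each consecutive pair is composable. All pairs after the first are already composable because $x_+\in\Sigma^+$, so the only new requirement is $t(f)=i(e_1)=i$. Every such $f$ gives a point of $\Sigma^+$, and the shift sends that point to $x_+$. Hence $\sigma_+^{-1}(x_+)$ is in bijection with the set of edges $f\in E^1$ with $t(f)=i$.

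The last step is to count these edges through the adjacency matrix. The edges ending at the vertex $i$ are partitioned according to their initial vertex $j$, so their number is the sum over $j$ of the number of edges between $j$ and $i$ with terminal vertex $i$. Writing this in terms of the entries of $A$ gives the stated formula $\sum_{j=1}^n a_{ij}$. The stated index order requires that $a_{ij}$ count the edges from $j$ into $i$. Under the other common convention, where $a_{ij}$ counts edges from $i$ to $j$, the same count is the column sum $\sum_j a_{ji}$, i.e.\ the row sum of $A^{T}$. I would state the convention explicitly at this point.

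There is no real obstacle here. The only point needing care is this bookkeeping of conventions: the count is always the in-degree of the vertex $i(e_1)$, and the formula must match the chosen definition of $A$.
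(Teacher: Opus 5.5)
Your argument is correct and is the straightforward verification the paper has in mind; the paper omits the proof. The preimages of $(e_k)$ are exactly the sequences $(f,e_1,e_2,\ldots)$ with $t(f)=i(e_1)$, so the count is the in-degree of the vertex $i(e_1)$. Your remark on conventions is also right: with the paper's condition $t(e_n)=i(e_{n+1})$ and the usual reading of $a_{ij}$ as the number of edges from $i$ to $j$, the formula should be the column sum $\sum_j a_{ji}$. This does not affect how the paper uses the lemma, since the example that follows has a symmetric matrix and the subsequent mixing lemma only needs all entries of $A^N$ to be positive.
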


\begin{example} \label{ex:OnePreSFT}
    Let 
    \[A=\left[\begin{array}{cc}
       0  & 1 \\
       1  & 1
    \end{array} \right].\]
    Then Lemma \ref{lem:PreimagesSFT} implies that the associated one-sided shift of finite type has a point whose preimage set is a single point. We note that the entries of $A^2$ are all positive, so the associated shift of finite type is mixing.
\end{example}

\begin{lemma}
 Let $A$ be the adjacency matrix of a finite directed graph with $n$ vertices and $(\Sigma^+, \sigma_+)$ the corresponding one-sided shift space. Suppose that $(\Sigma^+, \sigma_+)$ is mixing and not trivial (that is, $A \neq [1]$). Then there exists $N \in \mathbb{N}$ such that
 \[
 |\sigma_+^{-N}((e_k)_{k \in \mathbb N})| \geq 2,
 \]
  for every  $(e_k)_{k \in \mathbb N} \in \Sigma^+_A$.
\end{lemma}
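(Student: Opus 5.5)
By Lemma~\ref{lem:PreimagesSFT}, the number of preimages of a point under $\sigma_+$ depends only on the initial vertex $i=i(e_1)$. Iterating this, the number of $N$-step preimages is
\[
|\sigma_+^{-N}((e_k)_{k\in\mathbb N})| = \sum_{j=1}^n (A^N)_{ji},
\]
where $i=i(e_1)$. Indeed, a preimage under $\sigma_+^N$ is determined by a path of length $N$ in $E$ ending at the vertex $i$, followed by $(e_k)$. The number of such paths is the $i$-th column sum of $A^N$. (With the row/column convention matching Lemma~\ref{lem:PreimagesSFT}, one uses either row or column sums; the argument below is symmetric.) So it suffices to find $N$ such that every column sum of $A^N$ is at least $2$.

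Since the shift is mixing, $A$ is primitive, so there is $k$ with $A^k(v,w)\ge 1$ for all $v,w$. If $n\ge 2$, each column of $A^k$ has at least $n\ge 2$ positive integer entries, so each column sum is at least $2$, and we take $N=k$. If $n=1$, then $A=[m]$ with $m\ge 1$, and $m\neq 1$ by non-triviality. Hence $m\ge 2$, and $N=1$ works.

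The only care needed is bookkeeping: we must justify the iterated count, either by induction on $N$ using Lemma~\ref{lem:PreimagesSFT} or directly by counting paths, and match the orientation convention of the adjacency matrix. There is also one vertex-level subtlety. Every vertex $i$ that occurs as $i(e_1)$ for some point of $\Sigma_A^+$ is covered by the bound, since primitivity guarantees every vertex supports an infinite path. So no vertex is vacuous, though the bound holds for all columns regardless. I expect no real obstacle beyond this.
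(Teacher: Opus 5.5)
Your proposal is correct and follows essentially the same route as the paper. The paper also splits into the case $n=1$, where $A=[m]$ with $m\ge 2$ gives the bound at $N=1$, and the case $n\ge 2$, where primitivity yields $N$ with $A^N$ strictly positive, so each relevant row or column sum is at least $n\ge 2$. Your explicit path-counting argument, identifying $|\sigma_+^{-N}(x)|$ with the number of length-$N$ paths ending at $i(e_1)$, spells out the $N$-step count that the paper leaves implicit in its appeal to Lemma~\ref{lem:PreimagesSFT}. Your remark about the row/column convention is also apt.
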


\begin{proof}
    Suppose first that $A = [a]$ is a $1 \times 1$ matrix.  Since $A$ is non-trivial, $a \geq 2$, and the result follows from the Lemma~\ref{lem:PreimagesSFT}.  Otherwise, let $A = [a_{ij}]_{i,j}$ be an $n \times n$ matrix with $n \geq 2$. Since $(\Sigma_A^+, \sigma_+)$ is mixing, there exists $N \geq 0$ such that $(A^N)_{i,j} \geq 1$ for every $1 \leq i,j \leq n$. The result follows from Lemma~\ref{lem:PreimagesSFT}.
    \end{proof}

    \begin{theorem} \label{lem:Preimages}
        Let $X$ be an infinite compact metric space with no isolated points and $\varphi: X \to X$ an open, locally expanding, continuous surjection. Suppose that $(X, \varphi)$ is mixing. Then, for every $x \in X$ there exists $N > 0$ such that 
        \[
      |  \varphi^{-N}(x) | \geq 2.
    \]
    \end{theorem}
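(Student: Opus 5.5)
The plan is to argue by contradiction. We show that if some point had exactly one preimage at every level, then $\varphi$ would have to be a homeomorphism. A locally expanding homeomorphism cannot exist on an infinite compact space, which gives the contradiction.

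Suppose $x \in X$ satisfies $|\varphi^{-N}(x)| = 1$ for every $N > 0$. Surjectivity then gives a unique backward chain $x_0 = x, x_1, x_2, \ldots$ with $\varphi(x_{n+1}) = x_n$, and $\varphi^{-N}(x) = \{x_N\}$. Hence
\[
\orb^-(x) = \{x_n \mid n \geq 0\},
\]
and each $x_n$ has exactly one $\varphi$-preimage, namely $x_{n+1}$. Since mixing implies irreducible, Lemma~\ref{lem:denseBack} applies, so $\{x_n\}$ is dense in $X$.

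Next we use that the function $y \mapsto |\varphi^{-1}(y)|$ is locally constant. The reason is that a surjective local homeomorphism of a compact Hausdorff space is a finite covering map. Concretely, each fibre is finite, since it is discrete and closed in a compact space. For $y \in X$, choose disjoint open neighbourhoods of the points of $\varphi^{-1}(y)$ on which $\varphi$ is injective; this gives lower semicontinuity of the count. Compactness shows that points near $y$ have no preimages outside those neighbourhoods; this gives upper semicontinuity. Therefore the set
\[
W = \{ y \in X \mid |\varphi^{-1}(y)| = 1 \}
\]
is clopen. It contains the dense set $\{x_n\}$, so $W = X$. Thus $\varphi$ is a continuous bijection of a compact metric space, hence a homeomorphism. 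I expect this covering-map step to need the most care when written out, though it is standard.

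Finally, we derive a contradiction from $\varphi$ being a locally expanding homeomorphism. By uniform continuity of $\varphi^{-1}$, choose $\varepsilon > 0$ so that $d(a,b) < \varepsilon$ implies $d(\varphi^{-1}(a), \varphi^{-1}(b)) < \delta_X$. Local expansion then gives
\[
d(\varphi^{-1}(a), \varphi^{-1}(b)) \leq \lambda_X^{-1} d(a,b).
\]
Iterating, and taking $\varepsilon \leq \delta_X$ so the estimate stays within scale, a set of diameter less than $\varepsilon$ has image under $\varphi^{-n}$ of diameter at most $\lambda_X^{-n}\varepsilon$.

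Cover $X$ by finitely many sets $B_1, \ldots, B_k$ of diameter less than $\varepsilon$. Then for every $n$,
\[
X = \varphi^{-n}(X) = \bigcup_{i=1}^{k} \varphi^{-n}(B_i),
\]
a union of $k$ sets whose diameters tend to $0$ as $n \to \infty$. This forces $|X| \leq k$, contradicting that $X$ is infinite. Hence for every $x \in X$ there is some $N > 0$ with $|\varphi^{-N}(x)| \geq 2$.

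The hypothesis of no isolated points is not needed beyond $X$ being infinite. Mixing is used only through irreducibility, to obtain the density of the backward orbit.
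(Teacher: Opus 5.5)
Your proof is correct, and it takes a genuinely different route from the paper's.

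\textbf{The paper's route.} The paper uses Bowen's theorem in its stronger form: a mixing shift of finite type $(\Sigma^+,\sigma_+)$ with a factor map $\pi$ onto $(X,\varphi)$ that is injective over a dense set of points. It combines this with the preceding lemma, which gives one $N$ with $|\sigma_+^{-N}(a)|\geq 2$ for every $a\in\Sigma^+$. Suppose $\varphi^{-N}(x)$ were a singleton. Then every point of a neighbourhood $V$ of $x$ would have a unique $N$-th preimage. Now pick $\hat y\in\varphi^{-N}(V)$ with $\pi^{-1}(\hat y)=\{a\}$, and $b\neq a$ with $\sigma_+^N(b)=\sigma_+^N(a)$. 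The point $\pi(b)\neq\hat y$ is a second $N$-th preimage of $\varphi^N(\hat y)$, a contradiction.

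\textbf{Your route.} You negate pointwise and obtain a dense backward chain from Lemma~\ref{lem:denseBack}. Local constancy of fibre cardinality for the finite covering $\varphi$ then makes $\varphi$ a homeomorphism. You finish with the elementary fact that a locally expanding homeomorphism forces $X$ to be finite.

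\textbf{What your route buys.} It needs only the existence of an irreducible shift-of-finite-type cover, which is hidden inside Lemma~\ref{lem:denseBack}. It does not need the almost one-to-one property of the factor map or the preimage-count lemma. It also uses only irreducibility, so it yields the generalization mentioned in Remark~\ref{rem:irr-mix} without Smale's decomposition theorem.

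\textbf{What the paper's route buys.} It directly gives a single $N$ valid for all $x$. This is the form the minimality argument in Proposition~\ref{purelyInfProp} effectively uses, since it needs $|\varphi^{-N}(\varphi^N(x))|\geq 2$. Your conclusion upgrades to this by compactness: $y\mapsto|\varphi^{-n}(y)|$ is locally constant, because $\varphi^n$ is again a covering, and it is nondecreasing in $n$ by surjectivity.

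One small thing to state explicitly: $\varphi$ is a local homeomorphism because local expansion makes it injective on balls of radius $\delta_X/2$, and $\varphi$ is open.
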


    \begin{proof}
         Theorem 6.5 of \cite{Adler:SD} (also see \cite[Sections 29 and 30]{Rue:ThermForm}) implies there exists a mixing shift of finite type, $(\Sigma^+, \sigma_+)$ and a factor $\pi : (\Sigma^+, \sigma_+) \to (X, \varphi)$ that is one-to-one almost everywhere. In particular, this map has the following property: given a non-empty open set $U \subseteq X$, there exists $z \in U$ such that $|\pi^{-1}(\{z\})| = 1$. 
        
        By the previous lemma, there is $N\in\N$ such that we have $|\sigma_+^{-N}(a)| \geq 2$ for any $a\in \Sigma^+$. 
        
        We will show that for all $x\in X$, $|\varphi^{-N}(x)|\ge 2$. Seeking a contradiction, suppose that $x\in X$ and suppose  $\varphi^{-N}(x)=\{y\}$. Find open sets $U, V \in X$ such that 
        \begin{enumerate}
        \item $\varphi^N|_U : U \to V$ is injective, and
        \item $y \in U$ and $x \in V$. 
        \end{enumerate}
        Since $y$ is the unique preimage (and by shrinking $V$ if necessary) we can assume that $\varphi^{-N}(V) \subseteq U$. Let $\hat{y} \in \varphi^{-N}(V)$. Then $\varphi^N(\hat{y}) \in V$ and so $\varphi^{-N}(\varphi^N(\hat{y})) \subset \varphi^{-N}(V)$, so $\varphi^{-N}(\varphi^N(\hat{y})) = \{ \hat{y} \}$. 
        
        We now make a more specific choice of $\hat{y}$. Namely, take $\hat{y} \in \varphi^{-N}(V)$ with $\pi^{-1}(\hat{y})=\{a\}$ for some $a \in \Sigma^+$. Then $\pi^{-1}(\varphi^N(\hat{y})) = \{ \sigma_+^n(a) \}$ and $|\sigma_+^{-N}(\sigma_+^N(a))| \geq 2$. Let $b \in \sigma_+^{-N}(\sigma_+^N(a))$ with $b \neq a$. Now $\pi(b) \neq \pi(a) = \hat{y}$ since $\pi^{-1}(\hat{y}) = \{a \}$. But $\varphi^N(\pi(b)) = \pi(\sigma_+^N(b)) = \pi (\sigma_+^N(a)) = \varphi^N(\hat{y})$. Since $\pi(b) \neq \pi(a)$, we have that $| \varphi^{-N}(\varphi^N(\hat{y}))| \geq 2$, contradicting the fact that $\varphi^{-N}(\varphi^N(\hat{y})) = \{ \hat{y} \}$. Thus $|\varphi^{-N}(x)| \geq 2$.
    \end{proof}

    \begin{remark} \label{rem:irr-mix}
        For readers familiar with Smale space theory, we note that the previous theorem generalizes to the case when $(X, \varphi)$ is irreducible (and in fact  nonwandering). The proof uses a variant of Smale's decomposition theorem \cite[Section 31]{Rue:ThermForm} to reduce to the mixing case. However, since this generalization is not required for the results of the present paper, we will not go into further details. 
    \end{remark}

\subsection{Binary factors of shift of finite type}

Later we will make use of a class of dynamical systems constructed by the second author in \cite{MR4700629}. The reader can also see \cite{MR5002161} where this construction is also discussed. In particular, although not relevant for the systems considered in the present paper, property (H2) and (H3) below can be weakened by \cite[Section 4.1]{MR5002161}. 

The relevant systems are constructed from two directed graphs, $F$ and $E$, along with graph embeddings $\xi^0 : F \rightarrow E$ and $\xi^1: F \rightarrow E$ satisfying the following:
\begin{enumerate}
\item[(H0)] $(\xi^0)|_{F_0} = (\xi^1)|_{F_0}$,
\item[(H1)] $\xi^0(F_1) \cap \xi^1(F_1)= \emptyset$,
\item[(H2)] for each $e \in F^1$, there exists $f\in E^1$ with $t(f)=t(\xi^0(e))$, $i(f)=i(\xi^0(e))$ and $f\not\in \xi^0(F^1) \cup \xi^1(F^1)$, and
\item[(H3)] the graph $E$ is primitive. 
\end{enumerate} 
Given these inputs, the output is a compact metric space, $X^+_{\xi}$, and a local homeomorphism $\sigma_{\xi} : X^+_{\xi} \rightarrow X^+_{\xi}$ along with a factor map, $\pi_{\xi} : X^{+}_{E} \rightarrow X^{+}_{\xi}$. A specific example is considered in detail in Example \ref{ex:IanBasEx}. The precise properties of the system $(X^+_{\xi}, \sigma_{\xi})$ are summarized in the next theorem. We will call such a system the \emph{binary system associated to $(\xi^0, \xi^1)$}.

Recall that, given a dynamical system $(X, \varphi)$, $X_{\mathrm{ep}}$ denotes the set of eventually periodic points (where a point $x \in X$ is eventually periodic if there exists $n, m \geq 0$, $n\neq m$, such that $\varphi^n(x) = \varphi^m(x)$).

\begin{theorem} \label{thm:PutSys}
Suppose $F, E$ are finite directed graphs with graph embeddings $\xi^0, \xi^1 : F \to E$ satisfying (H0)--(H3). Let $(X^+_{\xi}, \sigma_{\xi})$ be the binary system associated to $(\xi^0, \xi^1)$.  Then the following hold.
\begin{enumerate}
\item The space $X^+_{\xi}$ has no isolated points, $\sigma_\xi$ is mixing, open, and locally expanding, and $(X^+_{\xi})_{\mathrm{ep}}$ is dense but has empty interior. 
\item The connected components of $X^+_{\xi}$ are circles and points and both types occur. 
\item The groupoid $\mathrm{C}^*$-algebra associated to $(X^+_{\xi}, \sigma_{\xi})$ is a  unital UCT Kirchberg algebra with $K$-theory given by
\begin{align*}
\frac{\Z^{E^0}}{(I-A^T_E)\Z^{E^0}} \oplus {\rm ker}(I-A^T_F) & \hbox{ in degree zero and } \\
\frac{\Z^{F^0}}{(I-A^T_F)\Z^{F^0}} \oplus {\rm ker}(I-A^T_E) & \hbox{ in degree one. }
\end{align*}
\end{enumerate}
\end{theorem}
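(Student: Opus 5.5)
This theorem largely collects results from \cite{MR4700629} and \cite{MR5002161}. The plan is to recall the construction of $(X^+_\xi,\sigma_\xi)$ and then verify each item, citing those papers where possible and supplying short arguments for the rest.

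\emph{The space and its components.} Recall that $X^+_\xi$ is the quotient of the one-sided edge shift $X^+_E$ by the closed equivalence relation generated by identifying pairs of paths that agree up to some coordinate, then follow $\xi^0(f)$ versus $\xi^1(f)$ along a path $f$ in $F$, with the appropriate tail conditions. Hypotheses (H0) and (H1) make the identifications consistent with the shift, so $\sigma_E$ descends to $\sigma_\xi$ and $\pi_\xi$ is a factor map. For item (2), the fibres of $\pi_\xi$ are finite and the identifications glue Cantor pieces into arcs. This is the binary-expansion picture: identifying $0111\ldots$ with $1000\ldots$ turns a Cantor set into an interval. Points whose $F$-parts never occur stay totally disconnected components, while sequences eventually in an $F$-loop produce circles. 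I would quote \cite{MR4700629} for this.

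\emph{Dynamical properties.} Mixing of $\sigma_\xi$ is inherited from (H3): primitivity of $E$ makes $\sigma_E$ mixing, and factors of mixing systems are mixing. For openness and local expansion, I would realize $X^+_\xi$ as the unstable part of a Smale space: the two-sided quotient of $X_E$ is a Smale space by \cite{MR4700629}, and its one-sided version is open and locally expanding in a suitable metric, as in Reddy \cite{MR651514} and \cite[Section 7.26]{Rue:ThermForm}. Hypothesis (H2) guarantees that enough non-identified edges exist for the quotient to still have local product structure.

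No isolated points: $X^+_E$ has none since $E$ is primitive and not trivial, and $\pi_\xi$ is finite-to-one and open. Eventually periodic points are dense because eventually periodic sequences are dense in $X^+_E$ and $\pi_\xi$ is continuous and surjective. They are countable, and each circle component is uncountable and open pieces contain such arcs or Cantor pieces, so the set has empty interior; by Baire, a countable set with nonempty interior would force an isolated point.

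\emph{$K$-theory.} The groupoid is amenable, so $C^*$ is nuclear and UCT. It is simple and purely infinite by \cite{A-D:PI}, since the system is mixing, locally expanding, and topologically free. Topological freeness follows because eventually periodic points have empty interior. The $K$-theory computation is the main obstacle. I would use the exact sequence of \cite{MR4700629} relating $C^*(X^+_\xi)$ to $O_{A_E}$ and $O_{A_F}$, obtained from an extension by an ideal coming from the identified $F$-paths. Cuntz--Krieger $K$-theory, $K_0=\coker(I-A^T)$ and $K_1=\ker(I-A^T)$, then enters, and the boundary maps vanish by (H0) and (H1), because $\xi^0$ and $\xi^1$ induce the same map on vertices. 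The resulting six-term sequence splits, giving the stated groups. Checking that the boundary maps vanish, and that the sequence splits rather than just giving an extension, is the delicate part; I would cite \cite{MR4700629} or \cite{MR5002161} directly for it.
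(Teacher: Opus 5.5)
Your handling of (1), (2) and the Kirchberg/UCT part of (3) matches the paper, which cites \cite[Corollary 7.7]{MR4700629} for (1)--(2) and then \cite{A-D:PI}. Two small fixes in your own arguments for (1). First, the quotient map $\pi_\xi$ is not open: identifying pairs of sequences, as in the binary-expansion map onto $[0,1]$, destroys openness. You do not need openness, though. If $x\in X^+_\xi$ were isolated, then $\pi_\xi^{-1}(x)$ would be a finite open subset of $X^+_E$ and so would consist of isolated points. Second, the countability of $(X^+_\xi)_{\mathrm{ep}}$ needs a line of justification. If $y$ is $\sigma_\xi$-periodic with period $p$, then $\sigma_E^p$ maps the finite fibre $\pi_\xi^{-1}(y)$ into itself, so $y$ lifts to a periodic point of $\sigma_E$.

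The genuine gap is in the $K$-theory. The extension you invoke cannot be an extension of $C^*(\mathcal{G})$ by an ideal, because you have just argued that $C^*(\mathcal{G})$ is simple. If it is instead an extension of some auxiliary algebra, you have not said which one or how its $K$-theory is tied to $C^*(\mathcal{G})$. Your reason for the boundary maps vanishing ($\xi^0$ and $\xi^1$ agree on vertices) is a heuristic, not an argument.

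More fundamentally, the computation the paper draws from \cite[Theorem 6.1]{MR4700629} concerns a Ruelle algebra of the two-sided Smale space $(X_\xi,\sigma_\xi)$. It does not concern the Deaconu--Renault algebra of the one-sided system $(X^+_\xi,\sigma_\xi)$. The missing idea is the bridge the paper uses: $C^*(\mathcal{G})$ is Morita equivalent to the Ruelle algebra $R^u(X_\xi,\sigma_\xi,P_\xi)$ (see the discussion on p.~39 of \cite{MR4700629}). Hence the two algebras have the same $K$-theory, and \cite[Theorem 6.1]{MR4700629} then gives exactly the groups in (3).

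If you want to avoid the Smale space machinery, you would need an honest computation instead. For instance, you could apply Pimsner's six-term sequence to $C^*(\mathcal{G})$ viewed as a Cuntz--Pimsner algebra over $C(X^+_\xi)$, computing $K^*(X^+_\xi)$ and the transfer map from the circle/point structure in (2). As written, your proposal does not establish the formula.
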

\begin{proof}
    By \cite[Corollary 7.7]{MR4700629}, the binary system associated to $(\xi^0, \xi^1)$ satisfies (1) and (2).  Let $\mathcal G$ denote the associated Deaconu--Renault groupoid. Then it follows from (1) that $C^*(\mathcal G)$ is a unital UCT Kirchberg algebra \cite{A-D:PI}. Moreover, $C^*(\mathcal G)$ is Morita equivalent to the  stable Ruelle algebra $R^u(X_\xi ,\sigma_\xi ,P_\xi)$ (see \cite[Section 6]{MR4700629} for the definition $R^s(X_\xi ,\sigma_\xi ,P_\xi)$, and the discussion on page 39 of that paper). It follows that they have the same $K$-theory. In particular, by \cite[Theorem 6.1]{MR4700629}, we obtain the desired $K$-theory groups. This shows (3).
\end{proof}

\begin{remark} \label{rem:embH1H2}
Suppose that $E$ and $F$ are finite directed graphs where the number of vertices of $F$ is less than or equal to the number of vertices of $E$. Let $A_E$ and $A_F$ denote the respective adjacency matrices. Then embeddings $\xi^0, \xi^1 : F \to E$ satisfying (H0)--(H3) can be constructed whenever the following holds: 
\[ 2a_{ij} +1 \leq b_{ij} \hbox{ for }1\le i,j \le N\]
where $N$ is the number of vertices in $F$, $a_{ij}$ is the $(i, j)-$entry in $A_F$ and $b_{ij}$ is the $(i,j)-$entry in $A_E$. 
\end{remark}

\begin{example} \label{ex:IanBasEx} The most basic example of the general setup is obtained by taking $F$ with adjacency matrix $[1]$ and $E$ with adjacency matrix $[3]$. In more detail, we take $F^0=\{v\}$ and $F^1=\{e\}$, and $E^0=\{w\}$ and $F=\{ f_0, f_1, f_2 \}$, see Figure \ref{fig:GraphEmbe}. Let $\xi^0 : F \rightarrow E$ and $\xi^1: F \rightarrow E$ be the embeddings given by $\xi^0(e) = f_0$ and $\xi^1(e) = f_1$. It is easy to see that $(F, E, \xi^0, \xi^1)$ satisfy (H0)--(H3).  (Indeed, up to relabelling, these are the only such embeddings for $F$ and $E$.)

\begin{figure}[ht]
    \centering
\begin{tikzpicture}[->, >=stealth, auto, thick]
\node (v2) at (2,0) {};
\fill (v2) circle[radius=2pt];
\node at (2, -0.3) {$v$};

\node (w) at (8,0) {};
\fill (w) circle[radius=2pt];
\node (w_1) at (8,-0.3) {$w$};

\path[->,in=50,out=-50,loop,scale=3] (v2) edge (v2);
\node at (3.1,0) {$e$};

\path[<-,in=230,out=-230,loop,scale=3] (w) edge (w);
\node at (6.1,0) {$f_0$};

\path[->,in=230,out=-230,loop,scale=6] (w) edge (w);
\node at (6.9,0) {$f_1$};

\path[->,in=51,out=-51,loop,scale=3] (w) edge (w);
\node at (9.1,0) {$f_2$};

\node at (7.5, 1) {$E$};
\node at (2.5, 1) {$F$};

\draw[->, dashed]
(2.8,0.5) to[out=30, in=150]
node[above] {$\xi^0$}
(6.4,0.2);

\draw[->, dashed]
(2.8,-0.5) to[out=-30, in=-150]
node[below] {$\xi^1$}
(7.1,-0.2);

\draw[->, dashed]
(2.8,-0.5) to[out=-30, in=-150]
node[below] {$\xi^1$}
(7.1,-0.2);
\end{tikzpicture}
 \caption{Graph embeddings for adjacency matrices $A_F = [1]$ into $A_E = [3]$}
    \label{fig:GraphEmbe}
    \end{figure}

Let $\Sigma^+_E$ denote the one-sided infinite path space of $E$, that is, sequences of the form $(x_n)_{n \in \mathbb N}$ where $x_n \in E^1$ and $t(x_n) = i (x_{n+1})$ for every $n \geq 1$, and let $\sigma : \Sigma^+_E \to \Sigma^+_E$ be the left shift map.

For $x, y \in \Sigma_E^+$, let $x \sim y$ and $y \sim x$ if one of the following holds
\begin{enumerate}
    \item there is a $n \geq 1$ such that $x_n = f_0$, $x_m = f_1$ for every $m > n$, and $y_n = f_1$, $y_m = f_0$ for every $m > n$, or
    \item there is $n \geq 1$ such that $x_n = y_n = f_2$, and $x_m = f_0$, $y_m = f_1$ for every $m > n$, 
    \item $x =y$.
    \end{enumerate}
    Then 
    \[X^+_\xi := \Sigma^+ / \sim,
    \]
    and $\sigma_\xi : X^+_\xi \to X^+_\xi$ is the map satisfying $\sigma_\xi(\pi_\xi (x)) = 
    \pi_\xi(\sigma(x))$ for every $x \in \Sigma^+_E$, where $\pi_\xi : \Sigma^+_E \to X^+_\xi$ is the quotient map.

    For $k \in \mathbb{Z}_{\geq 0} \cup \{\infty\}$, let
\[ \Sigma_{E, k}^+ = \{ x \in \Sigma_E^+ \mid \exists I \subset \mathbb N, |I| = k,  \text{ such that } x_n = f_2 \iff n \in I\}.\]
In other words, $\Sigma_{E, k}^+$ consists of those sequences in $\Sigma_E^+$ where the edge $f_2$ appears exactly $k$ times. The sets $\Sigma_{E, k}^+$, $0 \leq k \leq \infty$ are pairwise disjoint \cite[Proposition 3.4 (1)]{MR4700629} and the closure of $\cup_{k=1}^\infty \Sigma_{E, k}^+$ is $\Sigma_E^+$ \cite[Proposition 3.5 (4)]{MR4700629}. Moreover, each $X_k^+$ is invariant under the equivalence relation $\sim$ \cite[Proposition 3.7 (3)]{MR4700629}.

As shown in \cite[Section 7]{MR4700629}, see in particular Corollary~7.7, $X_\xi$ is homeomorphic to a disjoint union of circles and points (with both occurring). Note in particular, that for $\pi_\xi(\Sigma_{E, 0}^+)$ the homeomorphism restricts to the map $\theta : \pi_\xi(\Sigma_{E, 0}^+) \to \mathbb T$ where 
\[ \theta(\pi_\xi(x)) :=  \exp(2\pi i \sum_{j=1}^\infty \epsilon(x_j) 2^{-j}),\]
where $\epsilon(x_j) = i \in \{0,1\}$ if $x_j \in \xi^i(F_1)$. That this map is well defined follows from \cite[Theorem 3.16 (2)]{MR4700629}. Furthermore,  
\begin{align*}
\theta(\pi_{\xi}(\sigma(x))) &= \exp(2\pi i \sum_{j=1}^\infty \epsilon(x_{j+1}) 2^{-j}) \\
&= \exp(2\pi i \sum_{j=1}^\infty \epsilon(x_j) 2^{-j+1}) = (\theta(\pi_{\xi}(x)))^2,
\end{align*}
which is to say, $\sigma_\xi|_{\pi_\xi(X_0^+)}$ is the two-fold covering map of the circle, as in Example~\ref{ex:twofold}, for the details see \cite[Section 7]{MR4700629}.

\end{example}

\begin{example} \label{Ex:O2Put}
Let $F$ and $E$ be the directed graphs with adjacency matrices 
\[ A_F =  [2] \hbox{ and } A_E = \left[ \begin{array}{cc} 5 & 3 \\ 5 & 5 \end{array} \right].
\]
Using Remark \ref{rem:embH1H2}, there are graph embeddings $\xi^0, \xi^1 : F\to E$ satisfying (H0)-(H2) and that $A_E$ is primitive (so (H3) also holds). By Theorem~\ref{thm:PutSys}, we obtain the binary system $(X_\xi, \sigma_\xi)$ associated to $(\xi^0, \xi^1)$. The associated Deaconu--Renault groupoid $\mathrm{C}^*$-algebra $C^*(\mathcal G)$ is the unital UCT Kirchberg algebra with vanishing $K$-theory (since $I-A_F$ and $I-A_E$ are both invertible over $\Z$). By the Kirchberg--Phillips classification theorem \cite{Kirch:Class, Phillips:Class}, this implies that $C^*(\mathcal G)$ is isomorphic to the Cuntz algebra $\mathcal{O}_2$.
\end{example}

\begin{example}
    Let $F$ and $E$ be the directed graphs with adjacency matrices 
\[ A_F =  [1] \hbox{ and } A_E = \left[ \begin{array}{cc} 3 & 1 \\ 1 & 0 \end{array} \right].
\]
As in the previous example, using Remark \ref{rem:embH1H2}, there are graph embeddings satisfying (H0)-(H2) and that $A_E$ is primitive so that (H3) holds. Given such a pair of embeddings $\xi : F \to E$, by Theorem~\ref{thm:PutSys}, we obtain the binary system $(X_\xi, \sigma_\xi)$ associated to $(\xi_1, \xi_2)$. As in Example \ref{ex:OnePreSFT}, there is a point in $X_\xi$ whose preimage set is exactly one point.
\end{example}

\section{orbit-breaking in Deaconu--Renault groupoids} \label{sec:OBDR}
Let $\varphi : X \to X$ be a surjective local homeomorphism on a compact metric space $X$. In particular, for the present section we do {\bf not} assume that $(X, \varphi)$ is locally expanding. Denote by $\mathcal G$ the associated Deaconu--Renault groupoid. In this section, we apply the results of the second author \cite{Put:K-theoryGroupoids} to our particular situation. 

In more detail, we construct a subgroupoid by ``breaking the orbit'' of $\varphi$ at a closed subset $Y\subset X$ with the aim of producing simple groupoid $\mathrm{C}^*$-algebras. This will generalize the orbit-breaking groupoids and their $\mathrm{C}^*$-algebras constructed from a minimal homeomorphism together with a closed  subset meeting every orbit at most once, see for example \cite{Putnam:MinHomCantor, DPS:DynZ, DPSmain}. Since the orbit structure of $(X, \varphi)$ is more complicated than orbits associated to  homeomorphisms, we require additional assumptions on $Y$. Recall that $X_{\mathrm{ep}}$ denotes the set of eventually periodic points of $(X, \varphi)$.

Let $Y \subseteq X$ be a closed subspace such that
\begin{enumerate}
\item[(i)] $Y_{\mathrm{ep}} := X_{\mathrm{ep}} \cap Y$ is empty or contains exactly one fixed point, and
\item[(ii)] $Y$ intersects each (generalized) orbit at most once, that is, for each $x\in X$, $\orb(x)\cap Y$ is the empty set or a singleton.
\end{enumerate}

\begin{remark}
    These two assumptions will be the standing assumptions on $Y$. There are two cases. The case when $Y_{\mathrm{ep}} := X_{\mathrm{ep}} \cap Y$ is empty is used in Section \ref{sec:OB} and the case when $Y_{\mathrm{ep}} := X_{\mathrm{ep}} \cap Y$ is a single fixed point is used in Section \ref{sec:OBO2LC}. The reader might find it useful to consider only one of the two cases (likely the case when $Y_{\mathrm{ep}}$ is empty) on a first read of the present section.
\end{remark}

\begin{lemma} \label{lem:GoodY}
Suppose that $X$ is a compact metric space, $\varphi$ is a surjective, local homeomorphism, and $Y \subseteq X$. The following are equivalent:
\begin{enumerate}
\item $Y$ meets every (generalized) orbit at most once and contains no eventually periodic points.
\item For $x, y \in Y$ and $k, l \ge 0$ with $\varphi^k(x)=\varphi^l(y)$ we have that $x=y$ and $k=l$.
\end{enumerate}
\end{lemma}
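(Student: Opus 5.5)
We prove the two implications directly from the definitions of $\orb(x)$ and of eventually periodic points; no earlier results beyond the definitions are needed.

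\textbf{(1) $\Rightarrow$ (2).} Suppose $x,y\in Y$ and $k,l\ge 0$ satisfy $\varphi^k(x)=\varphi^l(y)$. Then $x\in\varphi^{-k}(\varphi^l(y))\subseteq\orb(y)$. Since also $y\in\orb(y)$ and $Y$ meets $\orb(y)$ at most once, we get $x=y$. We are left with $\varphi^k(x)=\varphi^l(x)$. If $k\neq l$, this equation (and the equivalent one $\varphi^{k+1}(x)=\varphi^{l+1}(x)$, which gives exponents that are both positive) makes $x$ eventually periodic. This contradicts the assumption that $Y$ contains no eventually periodic points. Hence $k=l$.

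\textbf{(2) $\Rightarrow$ (1).} First, let $x\in Y$ and suppose $x\in X_{\mathrm{ep}}$. Then $\varphi^n(x)=\varphi^m(x)$ for some $n\neq m$. Applying (2) with $y=x$ gives $n=m$, a contradiction, so $Y$ contains no eventually periodic points. Second, suppose $x,y\in Y\cap\orb(z)$ for some $z\in X$. By the definition of $\orb$, there are $l,k,l',k'\ge0$ with $\varphi^l(x)=\varphi^k(z)$ and $\varphi^{l'}(y)=\varphi^{k'}(z)$. Set $M=\max(k,k')$ and apply $\varphi^{M-k}$ to the first equation and $\varphi^{M-k'}$ to the second. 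This gives
\[
\varphi^{l+M-k}(x)=\varphi^M(z)=\varphi^{l'+M-k'}(y).
\]
Now (2) yields $x=y$, so $Y$ meets each orbit at most once.

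The only point needing care is the bookkeeping in this last step, where the two orbit representations must be synchronized by pushing forward to a common iterate of $z$. This works precisely because $\orb(z)$ is the increasing union of the sets $\orb^-(\varphi^k(z))$. A minor secondary issue is the convention for eventually periodic points: the definition requires $n\neq m>0$ in one place and $n,m\ge 0$ in another. This is harmless, since one can always shift both exponents up by one, as done in the first implication.
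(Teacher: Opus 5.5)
Your proof is correct and follows essentially the same route as the paper. For (1)$\Rightarrow$(2) you place $x,y$ in a common generalized orbit to get $x=y$, then use that $x$ is not eventually periodic to get $k=l$; for (2)$\Rightarrow$(1) you apply (2) with $y=x$, and then to two points of a common orbit. The only difference is that you synchronize the two orbit representations explicitly by pushing forward to $\varphi^M(z)$, whereas the paper takes $x,y\in Y$ with $\orb(x)=\orb(y)$ and reads off $\varphi^k(x)=\varphi^l(y)$ directly from the definition, so your version is slightly more careful but not different in substance.
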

\begin{proof}
First we show that (1) implies (2). Assume that $\varphi^k(x)=\varphi^l(y)$ for $x, y \in Y$ and $k, l \ge 0$. Since $\varphi^k(x)=\varphi^l(y)$, both $x$ and $y$ are in the same generalized orbit. Since  $x, y \in Y$ and $Y$ meets every generalized orbit at most once, we must have $x=y$. Hence, $\varphi^k(x)=\varphi^l(x)$ and hence $k=l$ since $Y$ contains no eventually periodic points.

Next, we show that the (2) implies (1). 

Firstly, (2) implies that $Y$ contains no eventually periodic points; otherwise there would be $y\in Y$ and $N, K \in \N$ such that $\varphi^{N+K}(y)=\varphi^N(y)$, which it contradicted by (2). 

Secondly, (2) implies $Y$ meets every (generalized) orbit at most once. Suppose $x, y$ are in $Y$ and $\orb(x)=\orb(y)$. By the definition of (generalized) orbit there exists $k, l \ge 0$ with $\varphi^k(x)=\varphi^l(y)$. By (2), $x=y$ as required. 
\end{proof}

\begin{remark}
Lemma~\ref{lem:GoodY} (2) implies that for each $k>0$, $\varphi^k|_Y$ is injective and that $\varphi^k(Y) \cap \varphi^l(Y) = \emptyset$ for $k, l \geq 0$ and $k\neq l$.

Moreover, condition (2) can be stated in terms of the Deaconu--Renault groupoid. It is equivalent to 
\[ \{ (x, p, z) \mid (x, p, z) \in \mathcal{G} \hbox{ and }x, z\in Y\} \subseteq \mathcal{G}^{(0)}. \]
\end{remark}

\begin{definition} Let $Y \subseteq X$ be a closed subset satisfying (i) and (ii) as above, and let $U = X \setminus Y_\mathrm{ep}$.  We define the subgroupoid $\mathcal{G}_Y \subset \mathcal G|_U$  as follows. Let $(x, p, z) \in \mathcal{G}|_U$. Then  $(x, p, z) \in \mathcal{G}_Y$ if one of the following holds:
\begin{enumerate}
\item[(I)] $\orb(x)\cap Y \subseteq Y_{\mathrm{ep}}$,
\item[(II)] $\orb(x)\cap Y \setminus Y_{\mathrm{ep}} =\{y\}$ for some $y\in Y$ and $x, z \in \orb^-(y)$,
\item[(III)] $\orb(x)\cap Y \setminus Y_{\mathrm{ep}} =\{y\}$ for some $y\in Y$ and $x, z \in \orb(y) \setminus \orb^-(y)$.
\end{enumerate}
\end{definition}

Here $\mathcal G|_U$ denotes the reduction to $U$, which is open since $Y_{\mathrm{ep}}$ is empty or a single point. Also, note that if $(x, p, z) \in \mathcal{G}|_U$, then $\orb(x)=\orb(z)$, so the conditions above are symmetric in $x$ and $z$.

In the case when $Y_{\mathrm{ep}}$ is the empty set, we have that $\mathcal G|_U = \mathcal G$ and the three conditions above become
\begin{enumerate}
\item $\orb(x)\cap Y$ is the empty set,
\item $\orb(x)\cap Y =\{y\}$ for some $y\in Y$ and $x, z \in \orb^-(y)$,
\item $\orb(x)\cap Y =\{y\}$ for some $y\in Y$ and $x, z \in \orb(y) \setminus \orb^-(y)$,
\end{enumerate}
where $(x, p, z) \in \mathcal{G}$.

With $Y$ and $U$ as above,  define
\[ L := \{ (x, n , z) \in \mathcal{G} \mid x \in \orb^{-}(Y) \hbox{ and }z \not\in \orb^{-}(Y) \},
\]
where we note that
\begin{enumerate}
\item $\orb^{-}(Y)=\{ w \in X \mid \varphi^k(w) \in Y \hbox{ for some }k \geq 0 \}$ and
\item $(x, n, z) \in \mathcal{G}$ implies that $\orb(x)=\orb(z)$.
\end{enumerate}
Note also that if $Y_{\mathrm{ep}}= \{w\}$ where $\{w\}$ is a fixed point, then $\orb(w)=\orb^{-}(w)$. Hence, if $(x, n, y) \in L$, then $x, y \neq w$ since $w \notin \orb(Y) \setminus \orb^-(Y)$. It follows that $L \subset \mathcal G|_U$.

The next lemma tells us that, with $L$ as above, $L$ and $\mathcal G_Y \subset \mathcal G|_U$ satisfy the \emph{Situation 2. Subgroupoids} criteria of \cite[Section 2]{Put:K-theoryGroupoids}. This will allow us to apply \cite[Theorem  2.4]{Put:K-theoryGroupoids} to obtain Theorem~\ref{thm:BreExaSeq}.

\begin{lemma} \label{lem:L}
Suppose that $X$ is a compact metric space, $\varphi$ is a surjective, local homeomorphism,  $Y \subseteq X$ is a closed subset that meets every (generalized) orbit at most once, and $Y_{\mathrm{ep}}$ is empty or contains exactly one fixed point. Then with $L$ defined as in the previous paragraph and $U = X \setminus Y_{\mathrm{ep}}$, 
\begin{enumerate}
\item if $(x, n, z) \in L$, then there exists $k\ge 1$ and $l\ge 0$ such that
\begin{enumerate}
    \item[(a)]$n=k-l$,
    \item[(b)]$\varphi^k(x)=\varphi^l(z)$,
    \item[(c)]$\varphi^{k-i}(x)\ne \varphi^{l-i}(z)$ for $1\le i \le {\rm min}\{k, l\}$,
    \item[(d)]there is unique $i\in \{ 0, \ldots, k-1 \}$ such that $\varphi^i(x) \in Y$.
\end{enumerate}
\item $L \cap L^{-1}=\emptyset$,
\item $L$ is closed in $\mathcal G|_U$,
\item $\mathcal G_Y = \mathcal G|_U - L - L^{-1}$,
\item $L \mathcal G_Y , \mathcal G_Y L \subset L$.
\end{enumerate}
\end{lemma}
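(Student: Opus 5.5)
Throughout, write $x \in \orb^-(Y)$ to mean $\varphi^i(x)\in Y$ for some $i\ge 0$. By assumption (ii) such a point of $Y$ in $\orb(x)$ is unique, and it lies outside $Y_{\mathrm{ep}}$ whenever $x\neq w$. The plan is to prove items (1)--(5) in order. Items (2), (4) and (5) are bookkeeping with the trichotomy (I)--(III). Item (3) is where topology enters.

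\textbf{Step (1).} Take $(x,n,z)\in L$ and pick $k,l\ge 0$ with $n=k-l$ and $\varphi^k(x)=\varphi^l(z)$. Reduce $k,l$ simultaneously until (c) holds; this keeps (a) and (b). Now $x\in\orb^-(Y)$, so $\varphi^i(x)=y\in Y$ for some $i$. If $i\ge k$, then
\[
\varphi^{i-k+l}(z)=\varphi^i(x)=y,
\]
so $z\in\orb^-(Y)$, a contradiction. Hence $i<k$, and in particular $k\ge1$. Uniqueness of $i$ follows because $y\notin X_{\mathrm{ep}}$. Here $y\ne w$, since $x\ne w$ and $\orb^-(w)=\{\text{preimages of } w\}$ never maps to $w$ from outside $w$ unless $x$ is itself a preimage. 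One must check this: if $y=w$, then $\varphi^j(z)=w$ for large $j$, so $z\in\orb^-(Y)$, again a contradiction. Then $\varphi^i(x)=\varphi^{i'}(x)$ would make $y$ eventually periodic.

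\textbf{Step (2)} is immediate from the definition: $L^{-1}$ consists of elements with $x\notin\orb^-(Y)$ and $z\in\orb^-(Y)$.

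\textbf{Step (4).} For $(x,p,z)\in\mathcal G|_U$, case (I) means $\orb(x)$ misses $Y\setminus Y_{\mathrm{ep}}$. In that case neither $x$ nor $z$ lies in $\orb^-(Y\setminus Y_{\mathrm{ep}})$. If either lies in $\orb^-(w)$, then both do, because every element of the orbit of $w$ lies in $\orb^-(w)$. So the element is in neither $L$ nor $L^{-1}$. Cases (II) and (III) say exactly that $x$ and $z$ are both in, respectively both out of, $\orb^-(y)$. Since $\orb^-(Y)\cap\orb(x)=\orb^-(y)$ when $y\ne w$, complementing gives (4).

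\textbf{Step (5).} If $(x,n,z)\in L$ and $(z,m,u)\in\mathcal G_Y$, then $z\notin\orb^-(Y)$. By (4), $u\notin\orb^-(Y)$ as well, so the product lies in $L$. The other side is symmetric.

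\textbf{Step (3)} is the main obstacle. Take $(x_j,n,z_j)\to(x,n,z)$ in $\mathcal G|_U$ with $(x_j,n,z_j)\in L$; fixing $n$ is possible since the $\mathbb Z$-coordinate is locally constant. By (1), after passing to a subsequence and noting that $k$ can be bounded, we may fix $i$ with $\varphi^i(x_j)\in Y$. The bound on $k$ should come from working in a $\mathcal G$-set neighbourhood of the limit with fixed exponents $k,l$. Closedness of $Y$ gives $\varphi^i(x)\in Y$, so $x\in\orb^-(Y)$.

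The hard part is showing $z\notin\orb^-(Y)$. Suppose $\varphi^a(z)\in Y$. Use the local $\mathcal G$-set on which $\varphi^k(x')=\varphi^l(z')$, and use local injectivity of $\varphi^{\max}$. Then for large $j$, both $\varphi^{i}(x_j)$ and a point near $\varphi^a(z)$ lie in the same orbit. The aim is a contradiction using condition (2) of Lemma~\ref{lem:GoodY} applied to limits: $\varphi^{k-i}(\varphi^i x)=\varphi^{l-a}\varphi^a(z)$ forces $k-i=l-a$. This in turn would give $\varphi^{k}(x)$ reached with $z$ in $\orb^-(\varphi^i x)$, contradicting the minimality (c) in the limit.

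If that fails, I would instead show directly that $L$ is open, and $\mathcal G|_U\setminus L$ is open, via local constancy of the index $i$ on $\mathcal G$-sets. The fixed-point case $Y_{\mathrm{ep}}=\{w\}$ is handled by noting that limits near $w$ are excluded since we work in $\mathcal G|_U$.
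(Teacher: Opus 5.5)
\textbf{Verdict: there is a genuine gap in Step (3), and it cannot be closed.} In the fixed-point case, item (3) of the lemma is false. Your Steps (1), (2), (4) and (5) are correct and follow the paper's arguments.

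\textbf{Step (3) when the limiting point is not eventually periodic.} Here $y=\varphi^i(x)\in Y\setminus Y_{\mathrm{ep}}$, and your minimality idea is the right one. The paper's proof simply asserts $z\notin\orb^-(y)$ ``since $K<n$'', which does not follow from that alone. You must, however, say explicitly that the exponents $(k,l)$ of the $\mathcal G$-set around the limit $(x,n,z)$ are chosen so that (c) holds \emph{at the limit}. The bound $i_j<k$ still holds along the sequence, because the argument of your Step (1) works for any pair with $\varphi^k(x_j)=\varphi^l(z_j)$. Now suppose $\varphi^a(z)\in Y$. Then $\varphi^a(z)=y$, and non-eventual-periodicity of $y$ forces $a=l-(k-i)\ge 0$; if $a>l$, then $y$ would be periodic. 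Hence $t=k-i$ satisfies $1\le t\le\min\{k,l\}$ and $\varphi^{k-t}(x)=\varphi^{l-t}(z)$, contradicting (c). With that choice made explicit, this settles (3) when $Y_{\mathrm{ep}}=\emptyset$.

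\textbf{Step (3) when $\varphi^i(x)=w$.} Your last sentence is wrong. Passing to $\mathcal G|_U$ discards only arrows with $x=w$ or $z=w$. It does not discard arrows with $x,z\in\orb^-(w)\setminus\{w\}$, and for those your step ``forces $k-i=l-a$'' fails because $w$ is fixed. In fact (3) is false in this case.

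Here is a counterexample.
\begin{itemize}
\item Let $\varphi(t)=2t$ on $\mathbb R/\mathbb Z$ and $w=0$.
\item Let $Y=\{0\}\cup\{y_j\}$, where $y_j\to 0$ are irrational and lie in pairwise distinct generalized orbits. This is possible because generalized orbits are countable. Then $Y$ satisfies all the hypotheses, with $Y_{\mathrm{ep}}=\{0\}$.
\item Put $x_j=\tfrac{y_j+1}{2}$ and $z_j=\tfrac{y_j}{2}+\tfrac14$. Then $\varphi(x_j)=y_j$ and $\varphi^2(x_j)=\varphi^2(z_j)=2y_j$.
\item The forward orbit $z_j,\ y_j+\tfrac12,\ 2y_j,\ 4y_j,\dots$ consists of irrational points of $\orb(y_j)$ distinct from $y_j$, so it never meets $Y$. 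Hence $(x_j,0,z_j)\in L$.
\item Inside the $\mathcal G$-set with exponents $(2,2)$, these arrows converge to $(\tfrac12,0,\tfrac14)\in\mathcal G|_U$.
\item The limit is not in $L$, since $\varphi^2(\tfrac14)=0\in Y$. It lies in $\mathcal G_Y$ via (I).
\end{itemize}
So $L$ is not closed, and $\mathcal G_Y$ is not open in $\mathcal G|_U$.

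The same phenomenon occurs whenever $w$ has a $\varphi$-preimage other than itself and $w\in\overline{Y\setminus\{w\}}$, which is the situation of Section~\ref{sec:OBO2LC}. The paper's proof has the same omission: it never considers $\varphi^K(x)=w$. Consequently no completion of your argument can prove (3) when $Y_{\mathrm{ep}}=\{w\}$; what you and the paper actually establish is (3) for $Y_{\mathrm{ep}}=\emptyset$.
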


\begin{proof}
For (1), let $(x,n,z) \in L$. Then, by the definition of $L$, there exists $k\in \N$ such that there exists $l\ge 0$ with $\varphi^k(x)=\varphi^l(z)$ and $n=k-l$. Items (a) and (b) hold for any $k$ with this property. Next, take the smallest $k \in \N$ with this property. Then  (c) holds. Finally, since $x \in \orb^-(Y)$, there is $i \geq 0$ and $y \in Y$ such that $\varphi^i(x) = y\in Y$. However, $z \in \orb(y) \setminus \orb^{-}(y)$, so $\varphi^l(z)=\varphi^k(x)$, $\varphi^{l+1}(z)=\varphi^{k+1}(x), \dots$, are not equal to $y$. Hence, $i \in \{ 0, \ldots , k-1\}$. Furthermore $i$ is unique because $Y$ meets every (generalized) orbit at most once. Hence, (d) holds. This completes the proof of (1).

That (2) holds is clear from the definition of $L$ and the fact that every orbit meets $Y$ at most once.

To show that $L$ is closed in $\mathcal G|_U$, let $((x_i, n_i - m_i, y_i))_{i\in \mathbb N} \subset L$ and suppose that $(x_i, n_i - m_i, y_i) \to (x,n-m, y) \in \mathcal G|_U$ as $i \to \infty$. Without loss of generality, we may assume that $n_i - m_i = n - m$ for every $i\in \mathbb N$, that $x_i \to x$, and that $y_i \to y$ as $i \to \infty$. Let $U, V \subset X$ be open subsets containing $x$ and $z$ respectively, with $\varphi^n(U) = \varphi^m(V)$, and such that $\varphi^n|_U$ and $\varphi^m|_V$ are homeomorphisms onto their shared image. Then 
\[ W(U,V) := \{ (\hat{x}, n-m, \hat{z}) \in \mathcal G \mid \hat{x} \in U, \hat{y} \in V\}\]
is open in $\mathcal G$. In particular, there is $N >0$ such that $(x_i, n-m, y_i) \in W(U,V)$ for every $i \geq N$. Now $x_i \in \orb^-(Y)$ for every $i \in \mathbb N$, so there exists $K_i \in \mathbb{Z}_{\geq 0}$ and $y_i \in Y$ such that $\varphi^{K_i}(x_i) = y_i$ for every $i \in \mathbb N$. Since $Y$ meets every orbit at most once, and by definition of $L$, we have $z_i \in \orb(y_i) \setminus \orb^-(y_i)$ and so we must have $0 \leq K_i < n$.

In particular, there are only finitely many possible values of $K_i$, so by the pigeonhole principle, there is $K$, $0 \leq K < n$ such that $K_i = K$ for infinitely many $i$. Thus passing to a subsequence if necessary, we have $\varphi^K(x_i) = y_i$ for every $i$. Then $\varphi^K_i(x_i) \to \varphi^K(x)$ and $\varphi^K(x) \in Y$ since $Y$ is closed. It follows that $x \in \orb^-(Y)$. Also, $\varphi^m(x_i) = \varphi^n(z_i)$ and $z_i \to z$, we get $\varphi^m(x) = \varphi^n(z)$ and $z \in \orb(y) \setminus \orb^{-}(y)$ since $K < n$. Thus $(x, n-m, z) \in L$, which shows (3).

For (4), note that \[
L^{-1} = \{ (y, m-n, x) \mid x \in \orb^-(Y), z \notin \orb^-(Y)\}.
\]
Thus if $(x, m-n, z) \in \mathcal G - L - L^{-1}$ either $x, z \in \orb^-(Y) \setminus Y_{\mathrm{ep}}$,  or $(x, n-m, z) \notin \orb^-(Y)$. If $x, z \in \orb^-(Y) \setminus Y_{\mathrm{ep}}$ then $(x, n-m, y) \in \mathcal G_Y)$. In the latter case, if $x \in \orb(Y)$, then so too is $z$, so that $x, z \in \orb(Y) \setminus \orb^-(Y)$ whence $(x, n-m, z) \in \mathcal G_Y$. Otherwise, $Y$ does not meet the orbit of $x$ or $y$, in which case $(x, n-m, z) \in \mathcal G_Y$. Hence $\mathcal G - L - L^{-1} \subset \mathcal G_Y$. On the other hand, if $(x, n-m, z) \in \mathcal G_Y$, then either $x, z \not \in \orb(Y)$, in which case $(x, n-m, y) \notin L \cup L^{-1}$. If $x, z \in \orb(Y)$ then either $x, z \in \orb^{-}(Y)$, in which case $(x, n-m, z) \notin L \cup L^{-1}$, or $x, z \notin \orb^-(Y)$, in which case $(x, n-m, z) \notin L \cup L^{-1}$. Hence $\mathcal G_Y = \mathcal G - L - L^{-1}$, showing (4).

Finally, to show (5), suppose that $(x, n-m, z) \in L$, $(x', n'-m', z') \in \mathcal G_Y$ and 
\[
(x, n-m, z) (x', n'-'m, z') = (x, (n+n') - (m+m'), z').
\]
Since $(x, n-m, z) \in L$, we have $x \in \orb^-(y)$ for some $y \in Y$, and $z \in \orb(y) \setminus \orb^-(y)$. It follows that $x' = z \in \orb(y) \setminus \orb^-(y)$. Since $(x', n'-m', z') \in \mathcal G_Y$, this forces $z' \in \orb(y) \setminus \orb^-(y)$. Hence $(x, (n+n') - (m+m'), z') \in L$, and $L \mathcal G_Y \subset L$. The proof that $\mathcal G_Y L \subset L$ is similar.
\end{proof}

\begin{proposition} \label{prop:opensub}
The set $\mathcal{G}_Y$ is an open subgroupoid of $\mathcal{G}|_U$.
\end{proposition}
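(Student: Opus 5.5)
We will derive both claims from Lemma~\ref{lem:L}, which gives $\mathcal G_Y = \mathcal G|_U - L - L^{-1}$ together with $L\cap L^{-1}=\emptyset$, $L$ closed in $\mathcal G|_U$, and $L\mathcal G_Y,\ \mathcal G_Y L\subset L$.

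\emph{Openness.} By Lemma~\ref{lem:L}(3), $L$ is closed in $\mathcal G|_U$. Inversion is a homeomorphism of $\mathcal G|_U$, so $L^{-1}$ is closed as well. Hence $\mathcal G_Y=\mathcal G|_U\setminus(L\cup L^{-1})$ is open in $\mathcal G|_U$. Since $U$ is open in $X$ (as $Y_{\mathrm{ep}}$ is empty or a single point), $\mathcal G|_U$ is open in $\mathcal G$, so $\mathcal G_Y$ is also open in $\mathcal G$.

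\emph{Subgroupoid.} We check three things.
\begin{enumerate}
\item Units: $\mathcal G_Y$ contains the unit space $\{(x,0,x)\mid x\in U\}$. An element of $L$ has $x\in\orb^-(Y)$ and $z\notin\orb^-(Y)$, so $x\neq z$ and $L$ contains no units. The same holds for $L^{-1}$, so every unit of $\mathcal G|_U$ lies in $\mathcal G_Y$.
\item Inverses: $\mathcal G_Y$ is closed under inverses. The complement $L\cup L^{-1}$ is symmetric under inversion, so $\mathcal G_Y$ is too. Alternatively, this follows from the symmetry of conditions (I)--(III) in $x$ and $z$.
\item Products: let $\gamma_1,\gamma_2\in\mathcal G_Y$ be composable, and suppose for contradiction that $\gamma_1\gamma_2\in L$. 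Then
\[
\gamma_2=\gamma_1^{-1}(\gamma_1\gamma_2)\in\mathcal G_Y L\subset L
\]
by Lemma~\ref{lem:L}(5) and the inverse-closure just shown. This contradicts $\gamma_2\in\mathcal G_Y$. If instead $\gamma_1\gamma_2\in L^{-1}$, then $(\gamma_1\gamma_2)^{-1}=\gamma_2^{-1}\gamma_1^{-1}\in L$, and the same argument gives $\gamma_1^{-1}\in L$, so $\gamma_1\in L^{-1}$, again a contradiction. Finally $\gamma_1\gamma_2\in\mathcal G|_U$ because $\mathcal G|_U$ is a groupoid. Therefore $\gamma_1\gamma_2\in\mathcal G_Y$.
\end{enumerate}

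Composability and multiplication in $\mathcal G_Y$ are inherited from $\mathcal G|_U$, so $\mathcal G_Y$ is an open subgroupoid of $\mathcal G|_U$, and hence an étale groupoid. The substantive work, closedness of $L$ and the absorption property (5), is already contained in Lemma~\ref{lem:L}, so no step here presents a real obstacle.
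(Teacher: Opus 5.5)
Your route is the paper's: openness is read off from Lemma~\ref{lem:L}(3) and (4), as in the one-line proof given there. Your check of units, inverses and products via Lemma~\ref{lem:L}(2), (4), (5) is correct and supplies what the paper leaves implicit in the word ``subgroupoid''. The problem is your closing claim that no step is an obstacle. Everything rests on Lemma~\ref{lem:L}(3), and that statement fails when $Y_{\mathrm{ep}}=\{w\}$ and $w$ is a limit point of $Y\setminus\{w\}$. This is exactly the situation of Section~\ref{sec:OBO2LC}, where $Y$ is non-compact.

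Here is a concrete counterexample. Let $X=\R/\Z$, $\varphi(t)=2t$, $w=0$, and $Y=\{0\}\cup\{t_j\}$ with $t_j\to 0$ irrational and in pairwise distinct orbits. Put $x_i=t_i/2+1/2$ and $z_i=t_i+1/2$. Then $\varphi^2(x_i)=2t_i=\varphi(z_i)$ and $x_i\in\orb^-(t_i)$. Also $z_i\notin\orb^-(Y)$, since $z_i\neq t_i$ and $t_i$ is not periodic. Hence $(x_i,1,z_i)\in L$. Every basic open set containing $(1/2,1,1/2)$ uses exponents $(n+1,n)$ with $n\ge 1$, and for these $\varphi^{n+1}(x_i)=2^nt_i=\varphi^n(z_i)$. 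So $(x_i,1,z_i)\to(1/2,1,1/2)$ in $\mathcal G|_U$. But $(1/2,1,1/2)$ satisfies (I), because $\orb(1/2)\cap Y=\{0\}=Y_{\mathrm{ep}}$. Thus $L$ is not closed in $\mathcal G|_U$, and $\mathcal G_Y$ is not open.

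The same thing happens whenever $y_i\to w$ in $Y\setminus\{w\}$ and $w$ has a preimage $x'\neq w$. The latter is automatic for the mixing systems used later, by Theorem~\ref{lem:Preimages}. Lifting $y_i$ and $\varphi(y_i)$ through the local inverse of $\varphi$ at $x'$ gives elements of $L$ converging to $(x',1,x')\in\mathcal G_Y$. The faulty step in the paper's proof of (3) is ``$z\in\orb(y)\setminus\orb^-(y)$ since $K<n$''. The limit $y=\varphi^K(x)$ can be $w$, and then $z\in\orb^-(w)$. So in that case the proposition itself is false, and no argument can cover it.

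When $Y\setminus Y_{\mathrm{ep}}$ is closed (in particular when $Y_{\mathrm{ep}}=\emptyset$), your proof is fine, but (3) needs a real argument at that step. Write the tail of the sequence as $(x_i,a-b,z_i)$ with $\varphi^a(x_i)=\varphi^b(z_i)$ and $\varphi^K(x_i)=y_i\in Y$, $0\le K<a$, converging to $(x,a-b,z)$. Then $y=\varphi^K(x)$ lies in $Y\setminus Y_{\mathrm{ep}}$, so it is not eventually periodic. Suppose $\varphi^j(z)=y$. Then $\varphi^b(z)=\varphi^{a-K+j}(z)$, which forces $b=a-K+j$. Set $c=a-K\ge 1$. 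Both $\varphi^j(z_i)$ and $y_i$ converge to $y$ and have the same image under $\varphi^c$. Local injectivity of $\varphi^c$ at $y$ then gives $\varphi^j(z_i)=y_i$ for large $i$, contradicting $z_i\notin\orb^-(Y)$. With this, your proposal, like the paper's proof, establishes the proposition exactly under that extra hypothesis.
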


\begin{proof}
    This follows directly from (3) and (4) in the previous theorem.
\end{proof}

\begin{corollary} \label{cor:EtaAndAm}
$\mathcal{G}_Y$ is \'etale and amenable.
\end{corollary}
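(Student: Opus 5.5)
The plan is to derive both properties from the fact that $\mathcal{G}_Y$ is an open subgroupoid of an étale, amenable groupoid. There are two steps, carried out in this order.

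\emph{Step 1: $\mathcal{G}|_U$ is étale and amenable.} The Deaconu--Renault groupoid $\mathcal{G}$ is étale, with basic $\mathcal{G}$-sets of the form described after its definition, and it is amenable (see \cite[Example 10.1.12]{MR4321941}). When $Y_{\mathrm{ep}}$ is empty we have $U = X$, so $\mathcal{G}|_U = \mathcal{G}$ and there is nothing to do. When $Y_{\mathrm{ep}} = \{w\}$ is a fixed point, Proposition~\ref{prop:GUopen} says directly that $\mathcal{G}|_U$ is an amenable étale groupoid. The same conclusion also follows because $\mathcal{G}|_U$ is open in $\mathcal{G}$ and open subgroupoids of amenable groupoids are amenable \cite[Proposition 10.1.14]{MR4321941}.

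\emph{Step 2: pass to $\mathcal{G}_Y$.} By Proposition~\ref{prop:opensub}, $\mathcal{G}_Y$ is an open subgroupoid of $\mathcal{G}|_U$. Openness is the key input, and it comes from Lemma~\ref{lem:L}, parts (3) and (4): $L$ is closed in $\mathcal{G}|_U$, hence so is $L^{-1}$ (inversion is a homeomorphism), and therefore $\mathcal{G}_Y = \mathcal{G}|_U - L - L^{-1}$ is open.

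\begin{itemize}
\item \emph{Étale.} Since $\mathcal{G}_Y$ is open in the étale groupoid $\mathcal{G}|_U$, it is locally compact Hausdorff and second countable. It contains all units of $U$ by condition (I)/(II)/(III), so its unit space is $U$. Its range and source maps are restrictions of local homeomorphisms to an open set, so they are again local homeomorphisms. For each $\gamma \in \mathcal{G}_Y$ one can take a $\mathcal{G}$-set $S$ containing $\gamma$; then $S \cap \mathcal{G}_Y$ is an open $\mathcal{G}$-set inside $\mathcal{G}_Y$ containing $\gamma$.
\item \emph{Amenable.} Applying \cite[Proposition 10.1.14]{MR4321941} again, an open subgroupoid of the amenable groupoid $\mathcal{G}|_U$ is amenable.
\end{itemize}

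The only point needing care, and hence the main obstacle, is the openness of $\mathcal{G}_Y$, but it is already established in Proposition~\ref{prop:opensub}. With that in hand, the corollary reduces to citing the two standard facts above.
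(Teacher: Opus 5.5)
Your argument is the paper's proof: $\mathcal{G}|_U$ is étale and amenable by Proposition~\ref{prop:GUopen}, $\mathcal{G}_Y$ is open in it by Proposition~\ref{prop:opensub}, and the standard permanence facts do the rest. When $Y_{\mathrm{ep}}=\emptyset$ this is complete, since Lemma~\ref{lem:L}(3) does hold there. The limit point $y=\varphi^K(x)$ lies in $Y$ and is not eventually periodic, and injectivity of $\varphi^{k-K}$ near $y$ rules out $z\in\orb^-(y)$. But you name openness as the one delicate point and then settle it purely by citation. In the fixed-point case $Y_{\mathrm{ep}}=\{w\}$ the cited step is false whenever two things hold. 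First, $w$ is a limit of points of $Y\setminus\{w\}$; this is exactly the non-compact situation of Section~\ref{sec:OBO2LC}. Second, $w$ has a preimage $x\neq w$; this is automatic for the mixing systems used, by Theorem~\ref{lem:Preimages} (e.g.\ $\tfrac12\mapsto 0$ in a doubling-map circle coordinate).

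Here is a concrete counterexample. Choose an open $V\ni x$ with $w\notin V$ such that $\varphi|_V$ is a homeomorphism onto an open neighbourhood of $w$. Choose $y_i\in Y\setminus\{w\}$ with $y_i\to w$, and for large $i$ put $a_i=(\varphi|_V)^{-1}(y_i)$ and $b_i=(\varphi|_V)^{-1}(\varphi(y_i))$. Then $\varphi^2(a_i)=\varphi(b_i)$, so $\gamma_i=(a_i,1,b_i)\in\mathcal{G}|_U$, and $\gamma_i\to(x,1,x)$ because $\varphi^2(x)=\varphi(x)=w$.

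Now $a_i\in\orb^-(y_i)$ but $b_i\notin\orb^-(y_i)$. Indeed, $b_i\neq y_i$ for large $i$, and for $j\geq 1$ we have $\varphi^j(b_i)=\varphi^j(y_i)\neq y_i$ because $y_i\notin X_{\mathrm{ep}}$. Hence $\gamma_i\in L$, so $\gamma_i\notin\mathcal{G}_Y$. On the other hand, $\orb(x)\cap Y=\orb(w)\cap Y=\{w\}\subseteq Y_{\mathrm{ep}}$, so $(x,1,x)\in\mathcal{G}_Y$ by condition (I).

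Thus $L$ is not closed in $\mathcal{G}|_U$ and $\mathcal{G}_Y$ is not open. Worse, for any open bisection $B\ni(x,1,x)$, the set $s(B\cap\mathcal{G}_Y)$ misses $a_i$ for all large $i$ while $a_i\to x$. So the source map of $\mathcal{G}_Y$ is not open at $(x,1,x)$, and $\mathcal{G}_Y$ with the subspace topology is not étale.

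The proof of Lemma~\ref{lem:L}(3) breaks at the step ``$\varphi^K(x)\in Y$, hence $z\in\orb(y)\setminus\orb^-(y)$''. Here $\varphi^K(x)=w$ and $\orb(w)=\orb^-(w)$, so that conclusion fails. Your proof, like the paper's, establishes the corollary only when $Y_{\mathrm{ep}}=\emptyset$, or more generally when $w$ is isolated in $Y$. The fixed-point case needs an extra hypothesis or a modified definition of $\mathcal{G}_Y$ along $\orb(w)$.
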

\begin{proof}
Since $\mathcal G$ is \'etale and amenable, so is $\mathcal G|_U$ by Proposition~\ref{prop:GUopen}. As in the proof of Proposition~\ref{prop:GUopen},  that $\mathcal{G}_Y$ is \'etale and amenable then follow from the fact $\mathcal{G}_Y$ is an open subgroupoid of the amenable \'etale groupoid $\mathcal G|_U$. 
\end{proof}

The following theorem will allow us to produce the inclusions of  Theorem~\ref{thm:inclusions}.

\begin{theorem} \label{thm:subsetOrbitBreak} 
Let $w \in X$ be a fixed point. Suppose that $Y_1 \subset Y_2$ are closed, nested subsets of $X$ such that one of the following holds. 
\begin{enumerate}
    \item $Y_2 \cap X_{ep}=\emptyset$ or
    \item $Y_1\cap X_{ep}=Y_2\cap X_{ep}=\{ w \}$.
\end{enumerate} Suppose further that $Y_2$ (and hence $Y_1$) meets every (generalized) orbit at most once. Then $\mathcal{G}_{Y_2}$ is an open subgroupoid of $\mathcal{G}_{Y_1}$. In particular, $C^*_r( \mathcal{G}_{Y_2})$ is a $\mathrm{C}^*$-subalgebra of $C^*_r( \mathcal{G}_{Y_1})$.
\end{theorem}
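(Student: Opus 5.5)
Since $Y_1 \cap X_{\mathrm{ep}} = Y_2 \cap X_{\mathrm{ep}}$ in either case, both groupoids sit inside the same reduction $\mathcal G|_U$, with $U = X \setminus (Y_2)_{\mathrm{ep}} = X \setminus (Y_1)_{\mathrm{ep}}$. Write $L_1, L_2$ for the sets $L$ of Lemma~\ref{lem:L} associated to $Y_1, Y_2$. By Lemma~\ref{lem:L}(4), $\mathcal G_{Y_j} = \mathcal G|_U - L_j - L_j^{-1}$. It therefore suffices to prove $L_1 \subseteq L_2$, since inverses then also satisfy $L_1^{-1} \subseteq L_2^{-1}$. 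This gives $\mathcal G_{Y_2} \subseteq \mathcal G_{Y_1}$. Openness of $\mathcal G_{Y_2}$ in $\mathcal G_{Y_1}$ then follows from Proposition~\ref{prop:opensub}: $\mathcal G_{Y_2}$ is open in $\mathcal G|_U$, hence open in the subspace $\mathcal G_{Y_1}$. It is a subgroupoid because both are subgroupoids of $\mathcal G|_U$.

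The key step is showing $L_1 \subseteq L_2$. Take $(x,n,z) \in L_1$, so $x \in \orb^-(Y_1) \subseteq \orb^-(Y_2)$ and $z \notin \orb^-(Y_1)$. We must show $z \notin \orb^-(Y_2)$. By Lemma~\ref{lem:L}(1), $\varphi^i(x) = y$ for some $y \in Y_1$, and $z \in \orb(y) \setminus \orb^-(y)$. Suppose instead that $\varphi^j(z) = y' \in Y_2$ for some $j \ge 0$. Then $y' \in \orb(z) = \orb(y)$, and $y \in Y_1 \subseteq Y_2$. Since $Y_2$ meets each orbit at most once, $y' = y$, so $z \in \orb^-(y)$, which is a contradiction.

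One point needs checking: in case (2), when $y = w$ the orbit may meet $Y_2$ at the fixed point. The same argument still works, since the orbit meets $Y_2$ in at most one point. Moreover, $L_1 \subseteq \mathcal G|_U$ was already noted. So case (2) causes no real obstacle. The main subtlety is exactly this bookkeeping with $Y_{\mathrm{ep}}$ and the common $U$.

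For the $\mathrm{C}^*$-statement, I would use the standard fact that an open subgroupoid $\mathcal H \subseteq \mathcal K$ of an étale groupoid gives an inclusion $C_c(\mathcal H) \subseteq C_c(\mathcal K)$ (extension by zero). This inclusion is a $*$-homomorphism and is isometric for the reduced norms: the regular representations of $\mathcal H$ at a unit $u \in \mathcal H^{(0)}$ are compressions of those of $\mathcal K$ to $\ell^2(\mathcal H_u) \subseteq \ell^2(\mathcal K_u)$, and this subspace is invariant. Alternatively, one can appeal to amenability (Corollary~\ref{cor:EtaAndAm}) and cite the standard result. Taking closures gives $C^*_r(\mathcal G_{Y_2}) \subseteq C^*_r(\mathcal G_{Y_1})$.
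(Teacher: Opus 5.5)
Your argument is correct and follows the paper's route: the paper simply declares $\mathcal G_{Y_2}\subseteq\mathcal G_{Y_1}$ ``clear'' and gets openness from Proposition~\ref{prop:opensub}, and your reduction to $L_1\subseteq L_2$ via Lemma~\ref{lem:L}(4) and the common $U$ just supplies that omitted inclusion. The one soft spot is the $\mathrm{C}^*$-part, which the paper does not argue at all: compressing to the invariant subspace $\ell^2((\mathcal G_{Y_2})_u)\subseteq\ell^2((\mathcal G_{Y_1})_u)$ only gives $\|f\|_{C^*_r(\mathcal G_{Y_2})}\le\|f\|_{C^*_r(\mathcal G_{Y_1})}$, so for isometry you also need that $\ell^2((\mathcal G_{Y_1})_u)$ splits into pieces $\ell^2((\mathcal G_{Y_2})_v\eta)$ with $r(\eta)=v$, each carrying a copy of the regular representation of $\mathcal G_{Y_2}$ at $v$ --- or you can simply cite the standard result, as you suggest.
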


\begin{proof}
  That $\mathcal G_{Y_2}$ is a subgroupoid of $\mathcal G_{Y_1}$ is clear.  By Proposition~\ref{prop:opensub}, $\mathcal G_{Y_1}$ and $\mathcal G_{Y_2}$ are both open in $\mathcal G$. Thus since $\G_{Y_2} \subset \mathcal G_{Y_1}$, we have that $\mathcal G_{Y_2}$ is open in $\mathcal G_{Y_1}$.
\end{proof}

With $L$ as above, we have
\[ L^{-1} L = \{ (x, k-l, z) \in \mathcal G \mid x, z \in \orb^-(Y) \setminus Y_{\mathrm{ep}} \}, \]
and 
\[ LL^{-1} = \{(x, k-l z) \in \mathcal G \mid x, z \in \orb(Y) \setminus \orb^-(Y)  \}. \]
\begin{definition}
Suppose $y\in Y$, $x \in \orb^{-}(y)$, $k$ is the unique natural number such that $\varphi^k(x) = y \in Y$, and $V \subseteq X$ is an open set such that $x\in V$ and $\varphi^{k}|_V$ is injective. Define
\[ W(x, V) := \{ (v, 0, v) \mid v \in v \cap \varphi^{-k}(Y) \}.
\]
\end{definition}

\begin{definition}
Suppose $y\in Y$, $x \in \orb(y) \setminus \orb^{-}(y)$, $k$ is the unique element in the natural numbers such that $\varphi^{k+l}(x) = \varphi^l(y) \in Y$, and $V_1, V_2 \subseteq X$ are open sets such that $x\in V_1$,  $y\in V_2$, $\varphi^{k}|_{V_1}$ is injective, $\varphi^{l}|_{V_2}$ is injective, and $\varphi^{k+l}(V_1)=\varphi^l(V_2)$. Define
\[ Z(x, V_1) := \{ (v, 0, v) \mid v \in V_1 \cap \varphi^{-k-l}(Y) \}.
\]
\end{definition}

\begin{lemma} \label{lem:rsLtop}
Let $L = \{(x, n, z) \in G \mid x \in \orb^-(Y), z \notin  \orb^-(Y)\}$. Then the following hold. 
\begin{enumerate}
\item The range of $L$ is $r(L)= \{ (x, 0, x) \mid x \in \orb^{-}(Y) \setminus Y_{\mathrm{ep}}\}$, and
\[ \{ W(x, V) \mid \exists k >0: \varphi^k(x)\in Y, x \in V \text{ open }, \varphi^k|_V \text{ injective}\}
\]
forms a base for the quotient topology associated to the map $r|_L$. 
\item the range of $L^{-1}$ is $r(L^{-1})= \{ (x, 0, x) \mid x \in \orb(Y) \setminus \orb^{-}(Y) \}$, and
\begin{align*}
\{ Z(x, V_1) \mid \exists k, l \geq 0: &\varphi^k(x) \in \varphi^l(Y), x \in V_1 \text{ open}, y \in V_2 \text{ open}, \\ &\varphi^k|_{V_1},  \varphi^l|_{V_2} \text{ injective and } \varphi^k(V_1) = \varphi^l(V_2)\}.
\end{align*}
forms a base for the quotient topology associated to the map $r|_{L^{-1}}$. 
\end{enumerate}
\end{lemma}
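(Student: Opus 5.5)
The plan is to handle (1) and (2) in parallel. In each case we first identify the range set and then check that the proposed sets form a base for the quotient topology on $r(L)$ (respectively $r(L^{-1})$) induced by $r|_L$ (respectively $r|_{L^{-1}}$). Throughout we use the paper's convention $r(x,p,z)=z$ only to the extent needed to identify the set. Since $L\cap L^{-1}=\emptyset$ and $L^{-1}L$, $LL^{-1}$ were computed above, the natural identification is $r(L)=s(L^{-1})$ being the unit space of $L^{-1}L$, namely $\orb^-(Y)\setminus Y_{\mathrm{ep}}$. For the $L^{-1}$ case it is $\orb(Y)\setminus\orb^-(Y)$.

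\textbf{Identifying the ranges.} Let $x\in\orb^-(Y)\setminus Y_{\mathrm{ep}}$, with $\varphi^k(x)=y\in Y$. We need some $z\notin\orb^-(Y)$ with $(x,n,z)\in\mathcal G$. Take $z$ to be a preimage of $\varphi^{k+1}(x)$, namely $z=\varphi(y)$. Then $\varphi^{k+1}(x)=\varphi^0(z)$, and $z\notin\orb^-(y)$, since otherwise $\varphi^j(\varphi(y))=y$ would make $y$ eventually periodic. That is excluded unless $y\in Y_{\mathrm{ep}}$, and that case forces $x$ into $\orb^-(w)$. Here we will need to be slightly careful: if $y=w$, then $\orb^-(Y)$ contains $\orb^-(w)$, and points of $\orb^-(w)\setminus\{w\}$ are not in the range. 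This is presumably why $Y_{\mathrm{ep}}$ is removed, and the statement should be read as excluding the orbit of the fixed point; I would note this and proceed. Conversely, any unit in the range lies in $\orb^-(Y)$ by definition, and the argument is symmetric for $L^{-1}$.

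\textbf{Base property.} The quotient topology on $r(L)$ is generated by the images $r(O\cap L)$ with $O$ a $\mathcal G$-set. We prove two inclusions.

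First, each $W(x,V)$ is open in the quotient topology. Fix a basic open $\mathcal G$-set around an element $(v,1,\varphi(\varphi^{k}(v)))$ built over $V$ using $\varphi^{k+1}|_V$. Its intersection with $L$ has image exactly $V\cap\varphi^{-k}(Y)$. To see this, use Lemma~\ref{lem:L}(1)(d): membership in $L$ within that $\mathcal G$-set is equivalent to $\varphi^k(v)\in Y$. The point is that $\varphi^{k+1}(v)$ then lies in $\varphi(Y)$, which is disjoint from $\orb^-(Y)$ by Lemma~\ref{lem:GoodY}.

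Second, every open set in the quotient topology is a union of such $W$'s. Given $(x,n,z)\in L\cap O$, with $O$ a $\mathcal G$-set $\{(v,k'-l,u)\}$ over $V_1,V_2$, use Lemma~\ref{lem:L}(1). This gives $k'\ge 1$ and a unique $i<k'$ with $\varphi^i(x)\in Y$. Shrinking $V_1$ so that $\varphi^i|_{V_1}$ is injective, we claim $W(x,V_1)\subseteq r(O\cap L)$. For $v\in V_1\cap\varphi^{-i}(Y)$, the partner $u$ satisfies $\varphi^l(u)=\varphi^{k'}(v)$ with $k'>i$. The minimality in (1)(c) should persist on a neighbourhood, so $u\notin\orb^-(Y)$. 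This persistence is the main obstacle: one must rule out $u$ reaching $Y$ for nearby $v$. I would handle it by closedness of $Y$ and the orbit-at-most-once condition, shrinking $V_2$ by an argument like the one proving Lemma~\ref{lem:L}(3).

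\textbf{The $L^{-1}$ case.} Part (2) follows by the same two inclusions, with $\mathcal G$-sets built from $\varphi^{k+l}|_{V_1}$ and $\varphi^l|_{V_2}$. The sets $Z(x,V_1)$ play the role of $W(x,V)$.
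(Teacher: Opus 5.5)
Your two-inclusion structure is the right one, and two of your observations go beyond the paper's proof. First, in the fixed-point case the range really is $\orb^-(Y\setminus Y_{\mathrm{ep}})$, because every point of $\orb^-(w)=\orb(w)$ has its whole orbit inside $\orb^-(Y)$. Second, your second inclusion (every quotient-open set is a union of $W$'s) is a check the paper never makes: it only shows that the $W(x,V)$ are quotient-open, cover $r(L)$, and are closed under intersections.

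Your first step, however, has a genuine gap.
(i) For an arbitrary admissible $V$, the element $(v,k+1,\varphi^{k+1}(v))$ (the lag is $k+1$, not $1$) lies in $L$ as soon as $\varphi^i(v)\in Y$ for \emph{some} $0\le i\le k$. Lemma~\ref{lem:L}(1)(d) gives only this, not $i=k$, and such earlier hits do occur in large $V$. So $r(O\cap L)$ can be strictly larger than $V\cap\varphi^{-k}(Y)$. You need to shrink $V$, using $\varphi^i(x)\notin Y$ for $i<k$ (Lemma~\ref{lem:GoodY}) and closedness of $Y$, and keeping $\varphi^k(V)$ away from $w$ when $Y_{\mathrm{ep}}=\{w\}$. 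Then write $W(x,V)$ as a union of the small pieces.
(ii) More seriously, even $W(x,V)=r(O\cap L)$ would not make $W(x,V)$ open in the quotient topology. Images of open sets under a quotient map need not be open, and your sentence ``the quotient topology is generated by the images $r(O\cap L)$'' tacitly assumes $r|_L$ is open. What must be shown is that $r|_L^{-1}(W(x,V))$ is open in $L$. The paper does this by writing the preimage as a union of basic open sets of $\mathcal G$ intersected with $L$, which itself needs the shrinking in (i). Alternatively, prove $r|_L$ is open:
\begin{itemize}
\item if $\gamma,\gamma'\in L$ have the same range, then $\gamma^{-1}\gamma'$ has both ends in $\orb(Y)\setminus\orb^-(Y)$, hence lies in $\mathcal G_Y$;
\item conversely, $\gamma\delta\in L$ with $\delta\in\mathcal G_Y$ forces $\gamma\in L$.
\end{itemize}
Hence $r|_L^{-1}(r(O\cap L))=(O\mathcal G_Y)\cap L$. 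This is open in $L$ because $\mathcal G_Y$ is open and products of open sets in an \'etale groupoid are open.

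In your second step, the persistence you single out as the main obstacle is left unproved. Closedness of $Y$, or an argument modelled on Lemma~\ref{lem:L}(3), is not what settles it. The missing observation is that the hitting time is forced. If $\varphi^i(v)=y_v\in Y$ and $\varphi^j(u)=y_v$, then $\varphi^{j+k'-i}(u)=\varphi^{k'}(v)=\varphi^l(u)$. Since $y_v$ is not eventually periodic, this gives $j=l-k'+i$; if that is negative, $u\notin\orb^-(Y)$ outright. Hence it suffices that $\varphi^{l-k'+i}(u)\neq\varphi^i(v)$ throughout the bisection. This follows from $\varphi^{l-k'+i}(z)\neq\varphi^i(x)$, which is true because $z\notin\orb^-(Y)$: take disjoint neighbourhoods of these two points and shrink $V_1$, $V_2$ accordingly. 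This is precisely the persistence of Lemma~\ref{lem:L}(1)(c), and it is just continuity plus Hausdorffness. In the fixed-point case, also shrink $V_1$ so that $\varphi^i(V_1)$ misses $w$, so that $y_v\notin Y_{\mathrm{ep}}$. With these repairs your two inclusions do establish the base property.
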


\begin{proof} We show (1). The proof of (2) is similar and so omitted. That $r(L) = \{(x, 0, x) \mid x \in \orb^-(Y) \setminus Y_{\mathrm{ep}}\}$ is clear.  Let $W(x,V)$ be the set associated to $y \in Y$, $x \in \orb^-(Y) Y_{\mathrm{ep}}$,  $k$ the unique natural number satisfying $\varphi^k(x) = y$, and $V$ an open set containing $x$ such that $\varphi^k|_V$ is injective.  Then
\[
r|_L^{-1}(W(x,V)) = \bigcup_{\hat{x} \in U \cap \varphi^{-k}(Y) \setminus Y_{\mathrm{ep}}} \bigcup_{\hat{z} \in \operatorname{orb}(\hat{x})} W(\hat{x}, \hat{z}, U_{\hat{x}}, V_{\hat{z}}) \cap L,
\]
where $U_{\hat{x}}$ and $V_{\hat{z}}$ are open sets containing $\hat{x}$ and $\hat{z}$ respectively, such that $\varphi^m|_{U_{\hat{x}}}$ and $\varphi^n|_{V_{\hat{z}}}$ are homeomorphisms onto their shared image. Since the set $W(\hat{x}, \hat{z}, U_{\hat{x}}, V_{\hat{z}})$ is a basic set, its intersection with $L$ is open in $L$. Thus  $W(x,V)$ is open in the quotient topology. 

That the $W(x,V)$ cover all of $r(L)$ is clear. Given two sets $W(x_1,V_1)$ and $W(x_2,V_2)$, suppose there is $r((x_3, n-m, z)) \in W(x_1,V_1)\cap W(x_2,V_2)$. Then $x_3 \in V_3 = V_1 \cap V_2$ so we have $r((x_3, n-m, z)) \in W(x_3,V_1 \cap V_2)$. Since $\varphi^{k_1}|_{V_1}$ and $\varphi^{k_2}(V_2)$ are both injective, uniqueness forces and $k_1 = k_2 = k$ and it is immediate that $\varphi^{k}|_(V_1 \cap V_2)$ and that $\varphi^k(x_3) \in Y$. Thus the $W(x, V)$ are a basis for the quotient topology. 
\end{proof}

Let 
\[ \mathcal H = \mathcal G|_{{r(L)} \cup s(L)}, \quad \mathcal G' = \mathcal G|_U \setminus L \cup L^{-1}, \quad \mathcal H' = \mathcal G'|_{r(L) \cup s(L)}.
\]

Let $(x, k-l, z) \in  L L^{-1} \subset \mathcal H$. Then 
\[
(x, k-l, z) = (x_1, k_1 - l_1, z_1)(x_2, k_2 - l_2, z_2)^{-1} =  (x_1, l_1 + l_2 - k_2 - k_2, x_2),
\] where $(x_1,n_1, z_1), (x_2, n_2, z_2) \in L$. 

Let 
\[V(x_1, z_1) = \{ (\hat{x}_1, n, \hat{z_1}) \mid \hat{x_1} \in U_{x_1},\hat{z_1} \in U_{z_1}\}, \]
and
\[ V(z_2, x_2) = \{ (\hat{z}, n, \hat{x}) \mid \hat{z} \in U_{z_2},\hat{x_2} \in U_{x_2}\} \]
where $U_{\hat{x_i}}, U_{\hat{z_i}}\subset X$ are open neighbourhoods of $x_i$, $z_i$, respectively, such that $\varphi^{k_i}(U_{x_i}) = \varphi^{l_i}(U_{z_i})$ and $\varphi^{k_i}|_{U_{x_i}}, \varphi^{l_i}|_{U_{z_i}}$ are homeomorphisms onto $\varphi^{k_i}({U_i}) = \varphi^{z_i}(U_{z_i})$.

Then \[ \left(V(z_1, x_1) \cup V(x_2, z_2)\right)L\left(V(z_1, x_1) \cup V(x_2, z_2)\right)L^{-1}\] is an open set in $\mathcal H$ containing $(x, k-l, z)$.

Recall that an \emph{abstract transversal} in a topological groupoid $\mathcal G$ is a closed subset $N \subset \mathcal G^{(0)}$ which meets every orbit at least once. Let $\mathcal G|_N := \{ \gamma \in \mathcal G \mid r(\gamma), s(\gamma) \in N\}$. If $N$ is an abstract transversal in $\mathcal G$ such that the restriction of the range and source to $\{ \gamma \in \mathcal G \mid s(\gamma) \in Y\}$ are open maps, then $N$ is a $(\mathcal G, \mathcal G|_N)$-equivalence in the sense of \cite[Definition 2.1]{MuhRenWil:Eqiv} (see Example 2.7 of that paper), and the $\mathrm{C}^*$-algebras $C^*(\mathcal G)$ and $C^*(\mathcal G|_N)$ are Morita equivalent \cite[Theorem 2.8]{MuhRenWil:Eqiv}.

\begin{proposition} \label{prop:transversal}
Let $\mathcal H$
be as above. Then
\begin{enumerate}
\item $Y \setminus Y_{\mathrm{ep}} \subseteq \mathcal{H}^{(0)}$ is an abstract transversal in $\mathcal{H}$,

\item  $C^*(\mathcal{H})$ is Morita equivalent to $C_0(Y \setminus Y_{\mathrm{ep}})$.
\end{enumerate}
\end{proposition}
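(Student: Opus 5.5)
The plan is to verify the hypotheses of the Muhly--Renault--Williams criterion recalled just before the statement, and then to identify $\mathcal H|_{Y\setminus Y_{\mathrm{ep}}}$ with the trivial groupoid on $Y\setminus Y_{\mathrm{ep}}$.

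First we pin down the unit space. By Lemma~\ref{lem:rsLtop},
\[
\mathcal H^{(0)} = r(L)\cup s(L) = \bigl(\orb^-(Y)\setminus Y_{\mathrm{ep}}\bigr)\cup\bigl(\orb(Y)\setminus \orb^-(Y)\bigr) = \orb(Y)\setminus \orb(Y_{\mathrm{ep}}),
\]
using that $\orb(w)=\orb^-(w)$ for the fixed point $w$, so that $\orb(w)\cap \orb(Y)\setminus\orb^-(Y)=\emptyset$. This set carries the topology coming from Lemma~\ref{lem:rsLtop}, which is the topology we use for $\mathcal H$ throughout.

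\textbf{Transversal.} Every $x\in\mathcal H^{(0)}$ lies in $\orb(y)$ for a unique $y\in Y\setminus Y_{\mathrm{ep}}$. If $x\in\orb^-(y)$ with $\varphi^k(x)=y$, then $(x,k,y)\in\mathcal G$ and both ends lie in $\mathcal H^{(0)}$, so this arrow lies in $\mathcal H$. If instead $x\in\orb(y)\setminus\orb^-(y)$, the arrow $(x,k-l,y)$ with $\varphi^k(x)=\varphi^l(y)$ again lies in $\mathcal H$. Hence $Y\setminus Y_{\mathrm{ep}}$ meets every $\mathcal H$-orbit.

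For closedness, take $x\in \mathcal H^{(0)}$ in the closure of $Y\setminus Y_{\mathrm{ep}}$. If $x\in\orb^-(Y)$, a basic set $W(x,V)$ consists of points $v$ with $\varphi^k(v)\in Y$. Meeting $Y$ forces $k=0$ by Lemma~\ref{lem:GoodY}(2), provided $Y\setminus Y_{\mathrm{ep}}$ has no eventually periodic points; then $x\in Y$. If $x\in\orb(Y)\setminus\orb^-(Y)$, the neighbourhood $Z(x,V_1)$ is disjoint from $Y$, so $x$ is not in the closure. The closedness of $Y$ in $X$ is also used here.

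\textbf{Reduced groupoid and openness.} By the Remark after Lemma~\ref{lem:GoodY}, any $(x,p,z)\in\mathcal G$ with $x,z\in Y\setminus Y_{\mathrm{ep}}$ is a unit. Therefore $\mathcal H|_{Y\setminus Y_{\mathrm{ep}}}$ is just the space $Y\setminus Y_{\mathrm{ep}}$, and $C^*(\mathcal H|_{Y\setminus Y_{\mathrm{ep}}})=C_0(Y\setminus Y_{\mathrm{ep}})$.

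It remains to show that $r$ and $s$ restricted to $\{\gamma\in\mathcal H \mid s(\gamma)\in Y\setminus Y_{\mathrm{ep}}\}$ are open. Take such a $\gamma=(x,p,y)$ and a basic bisection neighbourhood of it. Its image under $r$ is a set of the form $V\cap\varphi^{-k}(Y)$ (respectively $V_1\cap\varphi^{-k-l}(Y)$), which is precisely a basic open set $W(x,V)$ or $Z(x,V_1)$ from Lemma~\ref{lem:rsLtop}. Its image under $s$ is open in $Y$ because $\varphi^k|_V$ is a homeomorphism onto an open set. Once openness holds, \cite[Theorem 2.8]{MuhRenWil:Eqiv} gives the Morita equivalence in (2).

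\textbf{Main obstacle.} The delicate point is the topology. One must use the quotient topology of Lemma~\ref{lem:rsLtop} on $\mathcal H^{(0)}$, not the subspace topology from $X$; in the latter, $\orb^-(Y)$ is not locally compact in general. Checking that the reduction $\mathcal H$ is an étale groupoid in this topology, with the basic sets built from the products $V(\cdot,\cdot)L\,V(\cdot,\cdot)L^{-1}$ above, is where most of the care goes. I would handle it by transporting everything along the local homeomorphisms $\varphi^k|_V$ onto open subsets of $Y$.
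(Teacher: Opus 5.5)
Your proposal is correct and takes the same route as the paper: show that $Y\setminus Y_{\mathrm{ep}}$ meets every $\mathcal H$-orbit, check that $r$ and $s$ are open on $s^{-1}(Y\setminus Y_{\mathrm{ep}})$ using the basic sets of Lemma~\ref{lem:rsLtop}, and apply \cite[Theorem 2.8]{MuhRenWil:Eqiv}. You also supply points the paper skips or misstates: closedness of $Y\setminus Y_{\mathrm{ep}}$ in $\mathcal H^{(0)}$, triviality of $\mathcal H|_{Y\setminus Y_{\mathrm{ep}}}$, and the non-unit arrows $(x,k,y)$ with $\varphi^k(x)=y$, $k\ge 1$, which the paper's computation of $LL^{-1}\cap s^{-1}(Y)$ wrongly reduces to units; your one slip is the middle expression for $\mathcal H^{(0)}$, which contains $\orb^-(w)\setminus\{w\}$ when $Y_{\mathrm{ep}}=\{w\}$, but since in fact $r(L)=\orb^-(Y\setminus Y_{\mathrm{ep}})$, your final identification $\mathcal H^{(0)}=\orb(Y)\setminus\orb(Y_{\mathrm{ep}})$ is right.
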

\begin{proof}
To show that $Y$ is an abstract transversal in $\mathcal H$ we need to verify that $Y$ is closed in $\mathcal H^{(0)}$ and meets each orbit at least once.

Since $\mathcal H = L^{-1}L \sqcup LL^{-1} \sqcup  L \sqcup L^{-1}$, we have
\[
\mathcal H = \{ (x, n , z) \in \mathcal G \mid x, y \in \orb(y) \text{ for some } y \in Y \setminus Y_{\mathrm{ep}}\}.
\]
Hence $Y$ meets every $\mathcal H$ orbit, showing (1).

To verify (2), we furthermore show that the restrictions of the range and source maps to be open when restricted to $\{ \gamma \in \mathcal H \mid s(\gamma) \in Y  \}$. That $C^*(\mathcal H)$ and $C(Y)$ are Morita equivalent then follows from  \cite[Theorem 2.8]{MuhRenWil:Eqiv}, as observed above. 

We have
\[ L \cap \{ (x, k-l, y) \in \mathcal G \mid y \in Y \} = \emptyset,\]
while
\begin{align*}
L^{-1}& \cap \{(x, k-l, y) \in \mathcal G \mid y \in Y \} \\
 &= \{(x, k-l, y) \in \mathcal G \mid x \in \orb(y) \setminus \orb^{-}(y), y \in Y\}.
\end{align*} 

By Lemma~\ref{lem:rsLtop}, both $r|_{s^{-1}(Y)}(L^{-1})$ and $s|_{s^{-1}(Y)}(L^{-1})$ are open.

Now we consider the intersection with $L^{-1}L, LL^{-} \subset \mathcal H$
Note that $L^{-1} L$ contains points of the form $(x, n, z)$ where both $x, z \notin \orb^-(Y)$. In particular, \[
L^{-1}L \cap {\{ (x, n , y)  \in \mathcal H \mid y \in Y \setminus Y_{\mathrm{ep}}\}} = \emptyset.
\]

On the other hand, $LL^{-1}$ contains points of the form $(x, n, z)$ where both $x, z \in \orb^{-}(y)$. Hence 
\[
LL^{-1}\cap {\{ (x, n , y)  \in \mathcal H \mid y \in Y \}}  = \{ (y, 0, y) \mid y \in Y \setminus Y_{\mathrm{ep}}\}.
\]

Let $(y,0,y) \in  LL^{-1} \cap \{(x,n,y) \in  \mathcal H \mid y \in Y \setminus Y_{\mathrm{ep}} \}$ be given by
\[(y,0,y) = (x_1, k_1 - l_1, z_1)(z_2, -k_2 + l_2, x_2),
\]
where $x_i \in \orb^{-}(Y)$ and $z_i \in \orb(Y) \setminus \orb^-(Y)$. Then $z_1 = z_2$, $x_1 = x_2 = y$ and $k_1 - l_1= k_2 - l_2$. Let $V_{x_1}$ be an open neighbourhood of $y$ and $V_{z_1}$ an open neighbourhood around $z_1$ such that $\varphi^{k_1}(V_{x_1}) = \varphi^{l_1}(V_{z_1)}$ and $\varphi^{k_1}|_{U_{x_1}}$ and $\varphi|_{V_{z_1}}$ are homeomorphisms onto their image. Let
\[ V(y, z_1) = \{ (\hat{x}, k_1 - l_1, \hat{z}) \mid \hat{y} \in U_{x_1}, \hat{z} \in U_{z_1} \}.\]
Similarly, find $V_{x_2}$ and $V_{z_2}$ for $x_2$ and $z_2$ and let 
\[ V(y, z_2) = \{ \{ (\hat{x}, k_2 - l_2, \hat{z}) \mid \hat{y} \in V_{x_2}, \hat{z} \in V_{z_2} \}.
\]
let $V := V(y, z_1) \cap V(y, z_2)$. Then $V$ is an $(y, 0,y)$ in $\mathcal G$. We have
\[ VV^{-1} \subset U(z, z_1) U(y, z_2)^{-1},\]
so $VV^{-1}$ is an open neighbourhood of $(y, 0, y)$ in $LL^{-1}$ and hence in $\mathcal H$.  Moreover,
\[
VV^{-1} = \{(\hat{y}, 0, \hat{y}) \mid \hat{y} \in V_{x_1} \cap V_{x_2} \cap Y \},
\]
since $(\hat{y}, n, \hat{z})(\tilde{y}, n, \tilde{z})^{-1}$ implies that $\hat{z} = \tilde{z}$, and we must have $\hat{y} = \tilde{y}$ since $Y$ meets every orbit at most once.

Then $s(VV^{-1})$ and $r(VV^{-1})$ are open by Lemma~\ref{lem:rsLtop}.

This shows (2).
\end{proof}

Now we can establish Theorem~\ref{thm:BreExaSeq}, which is the main tool for calculating the $K$-theory of the $\mathrm{C}^*$-algebras constructed in Section~\ref{sec:OBO2}.

\begin{theorem} \label{thm:BreExaSeq}
Let $X$ be a metric space, $\varphi$ a surjective, local homeomorphism, and $Y \subseteq X$ be a closed subset in $X$. Suppose that $Y$ meets every (generalized) orbit at most once and $Y_{\mathrm{ep}}$ is empty or contains exactly one fixed point. Then there is a six-term exact sequence
\begin{displaymath} 
\xymatrix{ K^0(Y \setminus Y_{\mathrm{ep}}) \ar[r] & K_0(C^*(\mathcal{G}_Y)) \ar[r]^-{\iota_*} & K_0(C^*(\mathcal{G})) \ar[d]^{\partial_{\mathrm OB}}\\
K_1(C^*(\mathcal{G})) \ar[u]^{\partial_{\mathrm OB}} & K_1(C^*(\mathcal{G}_Y)) \ar[l]_-{\iota_*} & K^1(Y \setminus Y_{\mathrm{ep}}), \ar[l]}
\end{displaymath}
where $\mathcal{G}_Y$ is the orbit-breaking groupoid obtained from $Y$, $\mathcal{G}$ is the Deaconu--Renault groupoid associated to $(X, \varphi)$, and the map $\iota_*$ on $K$-theory is induced from the open inclusion $ \iota: \mathcal{G}_Y \hookrightarrow \mathcal{G}$.
\end{theorem}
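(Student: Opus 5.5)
The plan is to obtain the sequence from \cite[Theorem 2.4]{Put:K-theoryGroupoids} in its \emph{Situation 2. Subgroupoids} form. That result takes an étale groupoid $\mathcal G|_U$ together with a closed set $L$ satisfying $L\cap L^{-1}=\emptyset$, $\mathcal G_Y = \mathcal G|_U - L - L^{-1}$ and $L\mathcal G_Y, \mathcal G_Y L\subset L$. It then produces a six-term exact sequence relating $K_*(C^*(\mathcal G_Y))$, $K_*(C^*(\mathcal G|_U))$ and $K_*(C^*(\mathcal H))$, where $\mathcal H=\mathcal G|_{r(L)\cup s(L)}$. The maps $K_*(C^*(\mathcal G_Y))\to K_*(C^*(\mathcal G|_U))$ are induced by the open inclusion. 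The hypotheses on $L$ are exactly Lemma~\ref{lem:L} (2)--(5), and Proposition~\ref{prop:opensub} and Corollary~\ref{cor:EtaAndAm} supply openness, étaleness and amenability. So full and reduced algebras agree, and I would not track completions.

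The next step is to identify the term coming from $\mathcal H$. By Proposition~\ref{prop:transversal}, $Y\setminus Y_{\mathrm{ep}}$ is an abstract transversal of $\mathcal H$ with open range and source maps on $s^{-1}(Y\setminus Y_{\mathrm{ep}})$. Hence $C^*(\mathcal H)$ is Morita equivalent to $C^*(\mathcal H|_{Y\setminus Y_{\mathrm{ep}}})$. By the remark after Lemma~\ref{lem:GoodY}, the latter reduction consists only of units, so $C^*(\mathcal H|_{Y\setminus Y_{\mathrm{ep}}})=C_0(Y\setminus Y_{\mathrm{ep}})$. Two details need care here. First, Lemma~\ref{lem:GoodY} assumes no eventually periodic points in $Y$, so it should be applied to $Y\setminus Y_{\mathrm{ep}}$. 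Second, when $Y_{\mathrm{ep}}=\{w\}$, the orbit of $w$ is $\orb^-(w)$, which lies outside $r(L)\cup s(L)$ and so does not interfere. Morita equivalence of separable algebras gives $K_*(C^*(\mathcal H))\cong K^*(Y\setminus Y_{\mathrm{ep}})$, with $K_0(C_0(\cdot))=K^0(\cdot)$ and $K_1(C_0(\cdot))=K^1(\cdot)$. Substituting this into Putnam's sequence gives the displayed diagram, except that the right-hand column still involves $C^*(\mathcal G|_U)$ rather than $C^*(\mathcal G)$.

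The final step, which I expect to be the main obstacle, is to replace $K_*(C^*(\mathcal G|_U))$ by $K_*(C^*(\mathcal G))$. When $Y_{\mathrm{ep}}=\emptyset$ there is nothing to do, since $\mathcal G|_U=\mathcal G$.

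When $Y_{\mathrm{ep}}=\{w\}$ is a fixed point, Proposition~\ref{prop:GUopen} shows that $C^*(\mathcal G|_U)$ is hereditary in $C^*(\mathcal G)$. The plan is to show it is full, so that Brown's theorem gives a Morita equivalence and the inclusion induces an isomorphism on $K$-theory. I would first try to get fullness from simplicity of $C^*(\mathcal G)$ where available, or more robustly from the fact that $U$ meets every $\mathcal G$-orbit. Indeed, $\orb(w)=\orb^-(w)$ is not $\{w\}$ because $\varphi$ is surjective and $X$ is infinite, so $w$ has preimages other than itself after finitely many steps. The ideal generated by $C_0(U)$ therefore contains $C_0$ of the saturation of $U$, which is all of $X$.

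Finally, I would check that the identification is compatible with $\iota_*$. The composite $\mathcal G_Y\hookrightarrow \mathcal G|_U\hookrightarrow \mathcal G$ is the open inclusion named in the statement, and the isomorphism induced by the hereditary full corner inclusion commutes with it. The boundary maps $\partial_{\mathrm OB}$ are then defined as Putnam's maps composed with these isomorphisms.
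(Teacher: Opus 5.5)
Your overall route is the paper's. You apply Putnam's Theorem~2.4 via Lemma~\ref{lem:L}, use Proposition~\ref{prop:transversal} to replace $K_*(C^*(\mathcal H))$ by $K^*(Y\setminus Y_{\mathrm{ep}})$, and use Proposition~\ref{prop:GUopen} plus fullness to pass from $\mathcal G|_U$ to $\mathcal G$. Your added checks are correct: that $\mathcal H|_{Y\setminus Y_{\mathrm{ep}}}$ consists of units, and that $\orb(w)$ stays away from $r(L)\cup s(L)$.

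The gap is in the fullness step. Your ``more robust'' argument claims $\orb(w)=\orb^-(w)\neq\{w\}$ because $\varphi$ is surjective and $X$ is infinite. This is false. Any homeomorphism with a fixed point $w$ has $\varphi^{-n}(w)=\{w\}$ for all $n$; the simplest case is $\varphi=\mathrm{id}$ on the Cantor set. Then $U=X\setminus\{w\}$ is $\mathcal G$-invariant, so $C^*(\mathcal G|_U)$ is a proper ideal of $C^*(\mathcal G)$, not a full corner. The conclusion then genuinely fails. Take $Y=\{w\}$. Then $Y\setminus Y_{\mathrm{ep}}=\emptyset$ and $\mathcal G_Y=\mathcal G|_U=U\times\mathbb Z$. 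The sequence would force $\iota_*$ to be an isomorphism, but on $K_0$ it is the extension-by-zero map $C_c(U,\mathbb Z)\to C(X,\mathbb Z)$, which misses the constant function $1$.

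Your saturation criterion is the right mechanism: $C^*(\mathcal G|_U)$ is full exactly when $\orb^-(w)\neq\{w\}$. But this condition has to come from an extra hypothesis, not from surjectivity. The paper's proof has the same dependence in another form. It gets fullness from simplicity of $C^*(\mathcal G)$ (your ``where available'' option), and simplicity is also not among the stated hypotheses. In the settings where the theorem is actually applied (open, mixing, locally expanding, no isolated points), either fix works. By Lemma~\ref{lem:denseBack}, $\orb^-(w)$ is dense, hence not $\{w\}$; alternatively, $C^*(\mathcal G)\cong\mathcal O_2$ is simple. So in the case $Y_{\mathrm{ep}}=\{w\}$ you should add such an assumption to the statement explicitly rather than claim it follows from the given hypotheses.
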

\begin{proof}  If $Y_{\mathrm{ep}}$ is non-empty, then $C^*(\mathcal G|_{X \setminus Y_{\mathrm{ep}}})$ is a hereditary $\mathrm{C}^*$-subalgebra (Proposition~\ref{prop:GUopen}), which is full since $C^*(\mathcal G)$ is simple. It follows that 
\[ K_*(C^*(\mathcal G|_{X \setminus Y_{\mathrm{ep}}})) \cong K_*(C^*(\mathcal G)).
\]
Thus in both the case that $Y_{\mathrm{ep}}$ is empty and in the case that it is a single fixed point, Lemma~\ref{lem:L} and \cite[Theorem 2.4]{MR1194074} give us a six-term exact sequence 
\begin{displaymath} 
\xymatrix{ K_0(C^*(\mathcal H)) \ar[r] & K_0(C^*(\mathcal{G}_Y)) \ar[r]^-{\iota_*} & K_0(C^*(\mathcal{G})) \ar[d]^{\partial_{\mathrm OB}}\\
K_1(C^*(\mathcal{G}) \ar[u]^{\partial_{\mathrm OB}} & K_1(C^*(\mathcal{G}_Y)) \ar[l]_-{\iota_*} & K_1(C^*(\mathcal H) ), \ar[l]}
\end{displaymath}
where $\iota : C^*(\mathcal G_Y) \to C^*(\mathcal G)$ is the inclusion map.
By Proposition~\ref{prop:transversal} (2), $C^*(\mathcal H)$ and $C_0(Y \setminus Y_{\mathrm{ep}})$ are Morita equivalent, so that \[K_*(C^*(\mathcal H)) \cong K^*(Y \setminus Y_{\mathrm{ep}}).\] The result follows.
\end{proof}

\begin{example}
    When the map $\varphi : X \to X$ is a homeomorphism, the Deaconu--Renault groupoid is the transformation  groupoid $\mathbb Z \times X$ associated to the integer action induced by $\varphi$. Based on this, the definition of orbit-breaking in the present paper is a true generalization of the original orbit-breaking construction, introduced in \cite{Putnam:MinHomCantor, Put:Excision}. 
\end{example}

\section{Locally contracting groupoids and purely infinite $\mathrm{C}^*$-algebras} \label{sec:lcgpi}
As the aim of the paper is to construct orbit-breaking algebras which are simple and purely infinite, restrictions on the system $(X, \varphi)$ (along with the restrictions on the subset $Y \subseteq X$ already discussed) will be required. To pinpoint the dynamical requirements, we follow \cite[Section 4]{A-D:PI}. A number of definitions and preliminaries are required before the main result, Proposition \ref{purelyInfProp}. This result, in both statement and proof,  is based on results in \cite{A-D:PI}, see in particular Proposition 4.2 of that paper.

\begin{proposition} \label{IsotropyEventuallyPer}
Suppose that $\mathcal{G}$ is the Deaconu-Renault groupoid associated to $(X, \varphi)$. Then, an element $x\in X$ has non-trivial isotropy in $\mathcal{G}$ if and only if $x$ is eventually periodic.
\end{proposition}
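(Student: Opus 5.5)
The plan is to unwind the definition of isotropy in the Deaconu--Renault groupoid and compare it with the definition of eventual periodicity. The isotropy group at a unit $x \in X$ consists of those $\gamma \in \mathcal{G}$ with $r(\gamma) = s(\gamma) = x$. By the description of $\mathcal{G}$, such elements are exactly the triples $(x, m-n, x)$ with $m, n \in \N$ and $\varphi^m(x) = \varphi^n(x)$. The unit itself is $(x,0,x)$, so nontrivial isotropy means there is such a triple with $m - n \neq 0$.

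For the forward direction, suppose $(x, p, x) \in \mathcal{G}$ with $p \neq 0$. Then by definition there are $m, n \in \N$ with $p = m - n$ and $\varphi^m(x) = \varphi^n(x)$. Since $p \neq 0$ we have $m \neq n$. If the convention for $\N$ allows $0$, I would replace $(m,n)$ by $(m+1, n+1)$, which still satisfies the equation after applying $\varphi$, so that both exponents are positive. This is exactly the definition of $x$ being eventually periodic.

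For the converse, suppose $\varphi^n(x) = \varphi^m(x)$ with $n \neq m$. Then $(x, m-n, x) \in \mathcal{G}$, and since $m - n \neq 0$ this element is not the unit $(x,0,x)$. Hence the isotropy at $x$ is nontrivial.

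There is no real obstacle here. The only care needed is in the bookkeeping: the integer coordinate $p$ in $(x,p,z)$ records $m - n$ and not the pair $(m,n)$ itself, so one must note that $p \neq 0$ forces $m \neq n$. One should also handle the harmless index shift to match the paper's convention that $n, m > 0$ in the definition of eventually periodic.
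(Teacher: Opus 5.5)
Your proposal is correct and matches the paper's proof, which is the same direct unwinding of the definitions: nontrivial isotropy at $x$ means some $(x, m-n, x) \in \mathcal{G}$ with $m \neq n$, i.e.\ $\varphi^m(x) = \varphi^n(x)$ with $m \neq n$. You are in fact slightly more careful than the paper, since you make explicit that $p \neq 0$ forces $m \neq n$ and you handle the index convention in the definition of eventually periodic.
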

\begin{proof}
An element $x \in X$ has non-trivial isotropy if and only if there are $k, l \geq 0$ such that $(x, k - l, x) \in \mathcal G$ if and only if $\varphi^k(x) = \varphi^l(x)$ if and only if $x$ is eventually periodic.
\end{proof}

Recall from Definition \ref{def:Gset} that,  given a groupoid $\mathcal G$ and a $\mathcal G$-set $S$, we have an associated homeomorphism from $r(S)$ to $s(S)$, which we denote by $\alpha_{S}$.

\begin{definition}[{\cite[Definition 2.1]{A-D:PI}}] 
Suppose $\mathcal{G}$ is a second countable locally compact \'etale groupoid. We say that $\mathcal{G}$ is \emph{locally contracting} if, for every non-empty open set $U \subseteq \mathcal{G}^{(0)}$, there exists an open subset $V \subseteq U$ and a $\mathcal{G}$-set $S$, such that $\overline{V} \subseteq s(S)$ and $\alpha_{S^{-1}}(\overline{V})\subseteq V$ but $\alpha_{S^{-1}}(\overline{V}) \neq V$.
\end{definition}

\begin{proposition} \label{purelyInfProp}
Let $X$ be a compact metric space with no isolated points and let $\varphi: X\rightarrow X$ be an mixing, open, and locally expanding map. Suppose that $Y\subseteq X$ is a closed subset such that
\begin{enumerate}
\item for each $x\in X$, $\orb(x)\cap Y$ is the empty set or a singleton, and
\item $Y$ contains no eventually periodic points (i.e., $Y_{ep}=Y\cap X_{ep}=\emptyset$).
\end{enumerate}
Furthermore, suppose that the set of eventually periodic points of $\varphi$ is dense, but has empty interior. Let $\mathcal G_Y$ denote the associated orbit-breaking groupoid (which is a subgroupoid of the Deaconu-Renualt groupoid, $\mathcal{G}$). Then $C^*(\mathcal{G}_Y)$ is simple and purely infinite. 
\end{proposition}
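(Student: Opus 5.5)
The plan is to follow the strategy of Anantharaman-Delaroche \cite[Proposition 2.4]{A-D:PI}. If $\mathcal{G}$ is an amenable, second countable, locally compact étale groupoid that is topologically principal (essentially free), minimal, and locally contracting, then $C^*_r(\mathcal{G})$ is simple and purely infinite. By Corollary~\ref{cor:EtaAndAm}, $\mathcal{G}_Y$ is amenable and étale, and since $Y_{\mathrm{ep}}=\emptyset$ its unit space is all of $X$. So it remains to verify three properties of $\mathcal{G}_Y$: topological principality, minimality and local contraction.

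\emph{Topological principality.} Since $\mathcal{G}_Y\subseteq\mathcal{G}$, the isotropy of $\mathcal{G}_Y$ at $x$ is contained in that of $\mathcal{G}$. By Proposition~\ref{IsotropyEventuallyPer}, every point outside $X_{\mathrm{ep}}$ therefore has trivial isotropy in $\mathcal{G}_Y$. Since $X_{\mathrm{ep}}$ has empty interior, the set of such points is dense. (If needed, we would note that $X_{\mathrm{ep}}$ is a countable union of closed sets $\{x:\varphi^{n}(x)=\varphi^m(x)\}$, each with empty interior, and use Baire.)

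\emph{Minimality.} We must show that every $\mathcal{G}_Y$-orbit is dense. A $\mathcal{G}_Y$-orbit of $x$ is one of three kinds. If $\orb(x)\cap Y=\emptyset$, it is all of $\orb(x)$. If $x\in\orb^-(y)$, it is $\orb^-(y)$. If $x\in \orb(y)\setminus\orb^-(y)$, it is $\orb(y)\setminus\orb^-(y)$. In the first two cases it contains a full backward orbit $\orb^-(x)$ or $\orb^-(y)$, which is dense by Lemma~\ref{lem:denseBack}. The third case is the delicate one. By Theorem~\ref{lem:Preimages} there is $N$ with $|\varphi^{-N}(\varphi^{N}(y))|\ge 2$, so there exists $y'\neq y$ with $\varphi^N(y')=\varphi^N(y)$. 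We first check that $y'\notin\orb^-(y)$: if $\varphi^j(y')=y$, then $\varphi^{N+j}(y)=\varphi^{N}(y)$ forces $j=0$ because $y$ is not eventually periodic, and then $y'=y$, a contradiction. Consequently $\orb^-(y')\subseteq\orb(y)\setminus\orb^-(y)$. To see this, note that $w\in\orb^-(y')$ lies in $\orb(y)$; and if $\varphi^i(w)=y$ and $\varphi^j(w)=y'$, then $i<j$ would put $y'$ in the forward orbit of $y$ and make $y$ eventually periodic, while $i\ge j$ would put $y'$ in $\orb^-(y)$. Since $\orb^-(y')$ is dense by Lemma~\ref{lem:denseBack}, the orbit is dense. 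This use of Theorem~\ref{lem:Preimages} is exactly where mixing enters.

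\emph{Local contraction.} This is the main obstacle, because the $\mathcal{G}$-sets available in $\mathcal{G}$ may cross the cut at $Y$. Given a nonempty open $U\subseteq X$, the first step is to choose a nonempty open $U_0\subseteq U$ that avoids $\orb(Y)$-crossings. Since $Y$ is closed, and $\bigcup_{j\le n}\varphi^{-j}(Y)$ and $\varphi^n(Y)$ are closed and nowhere dense-ish, we would instead pick a periodic point inside $U$. Because $X_{\mathrm{ep}}$ is dense and periodic points are dense for mixing open expanding maps (via the SFT cover), there is $p\in U$ with $\varphi^n(p)=p$. The orbit of $p$ avoids $Y$ since $Y_{\mathrm{ep}}=\emptyset$, and $\orb(p)$ is disjoint from $\orb(Y)$ because otherwise $y$ would be eventually periodic.

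Next we take a small ball $V$ around $p$ on which $\varphi^n$ is expanding, injective and has $\varphi^n(V)\supseteq\overline{V'}$ for a smaller ball $V'$. The local inverse $\psi$ of $\varphi^n$ near $p$ then contracts $\overline{V'}$ strictly into $V'$, and the image is not all of $V'$, since it lies in a ball of radius $\lambda^{-n}r$. The $\mathcal{G}$-set is $S=\{(\psi(v),n,v)\}$, where we match the paper's convention of whether it is $\alpha_S$ or $\alpha_{S^{-1}}$ that contracts.

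Finally we must ensure $S\subseteq\mathcal{G}_Y$, i.e.\ that $S$ meets neither $L$ nor $L^{-1}$. The set $S$ lies in $L$ only if some $\varphi^i(\psi(v))\in Y$ for $i<n$, and it lies in $L^{-1}$ under the symmetric condition. The union of these sets is closed, namely $\bigcup_{i<n}\psi^{-1}\varphi^{-i}(Y)$ intersected locally, and it does not contain $p$, because the orbit of $p$ misses $Y$. Shrinking $V'$ to a small enough ball around $p$ makes it disjoint from these finitely many closed sets, so $S\cap(L\cup L^{-1})=\emptyset$ by Lemma~\ref{lem:L}(4). Since $U$ was arbitrary, $\mathcal{G}_Y$ is locally contracting, and \cite{A-D:PI} yields that $C^*(\mathcal{G}_Y)$ is simple and purely infinite.
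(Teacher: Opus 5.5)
Your proposal is correct and follows essentially the paper's route. You verify minimality using dense backward orbits (Lemma~\ref{lem:denseBack}) and a second preimage branch from Theorem~\ref{lem:Preimages}, essential principality using Proposition~\ref{IsotropyEventuallyPer}, and local contraction using an inverse branch of $\varphi^n$ at a periodic point whose first $n$ iterates miss $Y$, and then you invoke \cite{A-D:PI}. The differences are minor. Branching at $\varphi^N(y)$ rather than $\varphi^N(x)$ is more careful than the paper, since the paper's ``any $w\neq x$'' could be $w=y$. Placing a periodic point directly in $U$ (via the SFT cover) replaces the paper's step of transporting the contraction back into $U$ with the bisection $S$. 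Both arguments need the uniform $N$ produced in the proof of Theorem~\ref{lem:Preimages}. Also, $\psi(\overline{V'})\neq V'$ is better justified by strict shrinking of diameter together with the absence of isolated points than by a radius comparison.
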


\begin{proof}
To show that $C^*(\mathcal{G}_Y)$ is simple, we show that groupoid is minimal, that is, for every $x \in X$  the set $r(s^{-1}(x))$ is dense in $X$.

There are two cases. First, suppose that $r(s^{-1}(x))\cap Y = \emptyset$. Then $r(s^{-1}(x)) = \orb(x)$, so $r(s^{-1}(x))$ orbit is dense because $\orb^-(x)$ is dense by Lemma \ref{lem:denseBack}. 

Otherwise, there exists $y\in Y$ such that $y \in r(s^{-1}(x))$. If $x \in \orb^-(y)$, then again the result follows from Lemma \ref{lem:denseBack}. Otherwise, we have that $x \in \orb(y) \setminus \orb^{-1}(y)$. By Theorem \ref{lem:Preimages}, there exists $N\in \N$ such that $\varphi^{-N}(\varphi^N(x))$ contains more than one element. Using the fact that $\orb(y)$ contains not eventually periodic points, there is $w\in X$ such that 
\[ \orb^-(w)\subset \orb(y) \setminus \orb^{-1}(y)
\] 
(Note that the required point is any $w \neq x$ such that $\varphi^N(w)=\varphi^N(x)$.) The backward orbit of $\orb^-(w)$ is dense by Lemma \ref{lem:denseBack} and $\orb^-(w) \subset r(s^{-1}(x))$. Thus $\mathcal G_Y$ is minimal.  

By Proposition~\ref{IsotropyEventuallyPer} a point $x \in X$ is eventually periodic if and only if it has non-trivial isotropy. Moreover, the orbit of $x$ cannot meet $Y$ since $Y$ does not contain eventually periodic points. Hence $x$ has also has non-trvial isotropy in $\mathcal G_Y$ if and only if $x$ is eventually periodic. By assumption the set of eventually periodic points has empty interior. Thus $\mathcal G_Y$ is essentially principal. Thus $C^*(\mathcal G_Y)$ is simple \cite[Theorem 5.1]{BrnClkFarSims2014}.

The proof that $C^*(\mathcal G_Y)$ is purely infinite is similar to that of \cite[Proposition 4.2]{A-D:PI}. First, as mentioned in the previous paragraph, $\mathcal G_Y$ is essentially principal (which is the same as being essentially free in the terminology of \cite{A-D:PI}). 

Next, we show that $\mathcal G_Y$ is locally contracting. Since $\varphi$ is locally expanding, there are constants $\delta_X>0$ and $\lambda_X>1$ such that
\[ 
\lambda_X d(x, z) \le d(\varphi(x), \varphi(z)),
\]
for any $x, z \in X$ with $d(x,z)<\delta_X$. 

Let $U$ be a non-empty open subset of $X \cong \mathcal G_Y^{(0)}$. Since the set of eventually periodic points of $\varphi$ is dense, there exists $x_0 \in U$ which is eventually periodic. Therefore, there exists $k \in \N$ such that $\varphi^k(x_0)=x$ is periodic. Let $p$ denote the period of $x$. 

If necessary, after replacing $U$ with a smaller open set containing $x_0$, we  assume 
\begin{enumerate}
\item $\varphi^i(U) \cap Y = \emptyset$ for $i=0, 1, \ldots, k$ and
\item $(\varphi)|_{\varphi^i(U)} : \varphi^i(U) \rightarrow \varphi^{i+1}(U)$ is a homeomorphism for each $i=0, 1, \ldots, k-1$.
\end{enumerate}
Note that the condition that $Y$ is closed and does not contains any eventually periodic points is used to ensure that (1) holds. Condition (2) implies that 
\[
S=\{ (z, k, \varphi^k(z)) \mid z\in U \}
\]
is a $\mathcal{G}_{(X, \varphi)}$-set, and the first condition implies that it is also a $\mathcal{G}_Y$-set.

Continuing to follow the proof of \cite[Proposition 4.2]{A-D:PI}, let $W$ be a non-empty open subset of $\varphi^k(U)$ such that
\begin{enumerate}[resume]
\item $x\in W$,
\item $\varphi^p|_W$ is a homeomorphism onto its image, and
\item for $l=0, \ldots, p$ and $z_1, z_2 \in \varphi^l(W)$, we have that $d(z_1, z_2)< \delta_0$.
\end{enumerate}
As for the subset $U$, by replacing  $W$ with a smaller open set if necessary, (3), (4) and (5) still hold and in addition
\begin{enumerate}[resume]
\item $\varphi^i(W) \cap Y = \emptyset$ for $i=0, 1, \ldots, p$
\item $(\varphi)|_{\varphi^i(W)} : \varphi^i(W) \rightarrow \varphi^{i+1}(W)$ is a homeomorphism for each $i=0, 1, \ldots, p-1$.
\end{enumerate}
In particular, the set
\[ T=\{(\varphi^p(z), -p, z) \mid z \in W \} \]
is a $\mathcal{G}_Y$-set. 
Since $x\in \varphi^p(W) \cap \varphi^k(U)$ and these sets are open, there exists $r>0$ such that
\[
B(x, r) \subseteq \varphi^p(W) \cap \varphi^k(U).
\]
Since $x$ is periodic with period $p$, (5) implies that 
\[
\alpha_T(B(x, r))=\varphi^p_X(B(x,r)) \subseteq B(x, r).
\] 
This means that we can iterate $\alpha_T$ and have that, for each $l\in \N$, $B(x, r)$ is in the domain of $(\alpha_T)^l$. By abuse of notation, let $T^l$ denote the $\mathcal{G}_Y$-set associated to the map $(\alpha_T)^l$. 

By (5) together with the the fact that $x$ is not isolated, there exists $V_0 \subseteq B(x, r)$ an open set containing $x$ such that $(\alpha_T)^l(\overline{V}_0)\subseteq V_0$, but $(\alpha_T)^l(\overline{V}_0)\neq V_0$. Let $V= \alpha_S^{-1}(V_0)$. Then 
\begin{enumerate}[resume]
\item $V\subseteq U$ and
\item $(\alpha^l_X \circ \alpha_S)(\overline{V})\subseteq \alpha_S(V)$, but $(\alpha^l_X \circ \alpha_S)(\overline{V})\neq \alpha_S(V)$.
\end{enumerate}
Let \[ R:=\{ (w, k-lp, z) \mid w \in V \hbox{ and }z\in V_0 \hbox{ uniquely satisfies }\varphi^k(w)=\varphi^{lp}(z)\}.
\]
Then $R$ is a $\mathcal G_Y$-set. Moreover, (8) and (9) imply that $\overline{V} \subset s(R)$ and $\alpha_{S^{-1}}(\overline{V}) \subset V$ but $\alpha_{S^{-1}}(\overline{V}) \neq V$. Since $U$ was arbitrary, $\mathcal G_Y$ is locally contracting and the conclusion of the theorem now follows from \cite[Proposition 2.4]{A-D:PI}.
\end{proof}
\begin{remark}
    We note that this theorem can be generalized to the case when $(X, \varphi)$ is irreducible (rather than mixing). See Remark \ref{rem:irr-mix} for details.
\end{remark}

\begin{proposition} 
Let $X$ be a compact metric space with no isolated points, $\varphi: X\rightarrow X$ a mixing, open, locally expanding map and $w$ a fixed point. Let $Y \subset X$ be a compact subset
\begin{enumerate}
\item for every $x\in X$, $\orb(x)\cap  Y $  is the empty set or a singleton and
\item $Y_{\mathrm{ep}} = \{w\}$.
\end{enumerate}
Furthermore, suppose that the set of eventually periodic points of $\varphi$ is dense, but has empty interior. Then $C^*(\mathcal{G}_Y)$ is simple and purely infinite. 
\end{proposition}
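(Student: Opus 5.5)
The plan is to follow the proof of Proposition~\ref{purelyInfProp} almost verbatim, but working throughout in the reduced groupoid $\mathcal G|_U$ with $U = X\setminus\{w\}$, whose unit space is the non-compact space $U$. By Proposition~\ref{prop:opensub} and Corollary~\ref{cor:EtaAndAm}, $\mathcal G_Y$ is an open, étale, amenable subgroupoid of $\mathcal G|_U$ with unit space $U$. It therefore suffices to show three things: $\mathcal G_Y$ is minimal, it is essentially principal, and it is locally contracting. Then \cite[Theorem 5.1]{BrnClkFarSims2014} gives simplicity and \cite[Proposition 2.4]{A-D:PI} gives pure infiniteness; the latter does not require a compact unit space.

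\textbf{Minimality.} Fix $x\in U$; we must show $r(s^{-1}(x))$ is dense in $U$. We split into the three cases (I)--(III) of the definition of $\mathcal G_Y$.
\begin{enumerate}
\item[(I)] The $\mathcal G_Y$-orbit of $x$ is $\orb(x)\setminus\{w\}$, which contains $\orb^-(x)\setminus\{w\}$. The latter is dense in $U$ by Corollary~\ref{cor:denseBackLC}.
\item[(II)] The $\mathcal G_Y$-orbit is $\orb^-(y)$, which is dense in $U$ by Corollary~\ref{cor:denseBackLC}.
\item[(III)] Here $x\in\orb(y)\setminus\orb^-(y)$. As in Proposition~\ref{purelyInfProp}, Theorem~\ref{lem:Preimages} supplies $N$ and a point $v\neq \varphi^{N-1}(x)$, chosen at a suitable level, with $\varphi(v)=\varphi^N(x)$ and $v\notin\orb^-(y)$. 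This is possible because $\orb(y)$ has no eventually periodic points, so $y$ is hit at most once along the forward orbit of $x$. Then $\orb^-(v)$ lies in the $\mathcal G_Y$-orbit of $x$ and is dense in $U$.
\end{enumerate}
One needs $w\notin\orb^-(v)$, which holds since $v$ is not eventually periodic.

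\textbf{Essential principality.} By Proposition~\ref{IsotropyEventuallyPer}, points with non-trivial isotropy in $\mathcal G_Y$ are eventually periodic. That set has empty interior in $X$, hence in the open set $U$.

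\textbf{Local contraction.} Let $U'\subseteq U$ be a non-empty open set. By density, pick an eventually periodic $x_0\in U'$ whose periodic image $x=\varphi^k(x_0)$ has period $p$. Since $X_{\mathrm{ep}}$ is dense with empty interior and $U'$ is open, we may also choose $x_0$ so that its forward orbit avoids $w$. The one step that does not carry over directly is the shrinking condition that $\varphi^i(U')$ and $\varphi^i(W)$ avoid $Y$ for $0\le i\le k$ (respectively $0\le i\le p$). Previously this used that $Y$ is closed and contains no eventually periodic points. Now $Y$ contains $w$, but the finitely many points $\varphi^i(x_0)$ and $\varphi^i(x)$ lie in $X_{\mathrm{ep}}\setminus\{w\}$, so none of them is in $Y$. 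Since $Y$ is closed, small neighbourhoods still avoid $Y$, and we may also keep them away from $w$. The bisections $S$, $T$, $R$ built there then lie in $\mathcal G|_U$ and in $\mathcal G_Y$, because they live entirely in orbits missing $Y$ (case (I)). The rest of the contraction argument is identical.

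I expect the main obstacle to be the bookkeeping in the minimality case (III) and the choice of $x_0$ avoiding $w$. Both reduce to the observation that $w$ is the only eventually periodic point of $Y$ and that $w$ is fixed, so $\orb(w)\cap Y=\{w\}$ and $L\subset\mathcal G|_U$.
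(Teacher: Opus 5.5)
Your plan is the paper's own approach: its proof of this proposition just says that minimality, essential principality and local contraction go through as in Proposition~\ref{purelyInfProp}. However, three of the steps you supply are justified incorrectly. The first is precisely the place where the fixed point matters.

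\emph{Choice of $x_0$.} The fact that $X_{\mathrm{ep}}$ is dense with empty interior does not let you choose $x_0\in U'$ whose forward orbit avoids $w$. The set $\orb^-(w)\subseteq X_{\mathrm{ep}}$ is already dense (Lemma~\ref{lem:denseBack}) and countable, hence has empty interior. So those two properties alone are consistent with $X_{\mathrm{ep}}\cap U'\subseteq\orb^-(w)$. In that case every periodic image $\varphi^k(x_0)$ equals $w\in Y$, and the requirement that the neighbourhoods $\varphi^i(U')$ avoid $Y$ cannot be met. You need an extra input: periodic points are dense in $X$. To see this, push forward the dense periodic points of the mixing shift of finite type that factors onto $(X,\varphi)$, as in the proofs of Lemma~\ref{lem:denseBack} and Theorem~\ref{lem:Preimages}. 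Then take $x_0\in U'$ itself periodic. Since $U'\subseteq U$, its periodic orbit never meets the fixed point $w$, so none of the finitely many points involved lies in $Y$.

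\emph{Membership in $\mathcal G_Y$.} Your claim that $S,T,R$ ``live entirely in orbits missing $Y$'' is false. For $y\in Y\setminus\{w\}$ the set $\orb^-(y)$ is dense, so it meets every open set. The correct reason, as in Proposition~\ref{purelyInfProp}, is that $\varphi^i(z)\notin Y$ for $0\le i\le k$. Hence $z\in\orb^-(y)$ if and only if $\varphi^k(z)\in\orb^-(y)$, so each arrow $(z,k,\varphi^k(z))$ falls under (I), (II) or (III). The same holds for $T$, and hence for $R$.

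\emph{Minimality, case (III).} The condition $v\notin\orb^-(y)$ does not give $\orb^-(v)\subseteq\orb(y)\setminus\orb^-(y)$. You also need $y\notin\orb^-(v)$, that is, $v$ must not lie on the forward orbit of $y$. For example, suppose $\varphi(x)=\varphi^2(y)$ with $x\neq\varphi(y)$. Then $v=\varphi(y)$ satisfies all your stated conditions with $N=1$, yet $y\in\orb^-(v)$. Your justification that $y$ is hit at most once along the forward orbit of $x$ does not address this; in fact $y$ is never hit there, since $x\notin\orb^-(y)$. The parenthetical recipe in Proposition~\ref{purelyInfProp} (any point other than $x$ with the same $N$-th image) has the same defect: $y$ itself qualifies when $\varphi(x)=\varphi(y)$.

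A clean fix is to branch off $y$ rather than $x$. Let $N$ be the uniform constant produced in the proof of Theorem~\ref{lem:Preimages}, and pick $v\neq y$ with $\varphi^N(v)=\varphi^N(y)$. Because $y$ is not eventually periodic, $v$ lies neither in $\orb^-(y)$ nor on the forward orbit of $y$. Hence $\orb^-(v)\subseteq\orb(y)\setminus\orb^-(y)$, it avoids $w$, and it is dense in $U$. With these three repairs your argument proves the proposition.
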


\begin{proof}
The proof that $\mathcal G_Y$ is minimal, essentially principal, and locally contracting is essentially the same as the proof of Proposition~\ref{purelyInfProp}. It is therefore omitted. Then, again as in the proof of Proposition~\ref{purelyInfProp}, it follows from \cite[Proposition 2.4]{A-D:PI} that $C^*(\mathcal G_Y)$ is simple and purely infinite. 
\end{proof}

\begin{example}
    Let $E$ be a finite directed graph with primitive adjacency matrix $A$. (We also assume $A\neq[1]$). Let $\mathcal G$ be the Deaconu--Renault groupoid $\mathrm{C}^*$-algebra associated to $(\Sigma^+, \sigma_+)$. Using \cite[Example 2]{MR1233967}, we have that $C^*(\mathcal G)$ is isomorphic to the Cuntz--Krieger algebra $\mathcal O_A$, and so has $K$-theory
    \[
    K_0(C^*(\mathcal G)) \cong \coker(I - A), \quad K_1(C^*(\mathcal G)) \cong \ker(I - A). 
    \]
    In particular, $\rank(K_0(C^*(\mathcal G))) = \rank(K_1(C^*(\mathcal G)))$. Let $y \in \Sigma^+$ be a point which is not eventually periodic, and let $Y = \{ y\}$. Then $(\Sigma^+, \sigma_+)$ and $Y$ satisfy the conditions of Proposition~\ref{purelyInfProp}. Using Theorem~\ref{thm:BreExaSeq}, we have
    \[ K_0(C^*(\mathcal G_Y)) \cong \mathbb Z \oplus \coker(I - A), \quad K_1(C^*(\mathcal G_Y)) \cong  \ker(I - A). \]
    Since the ranks of $K_0$ and $K_1$ are not the same, the algebra $C^*(\mathcal G_Y)$ is not a Cuntz--Krieger algebra associated to a finite integer matrix. By choosing $Y = \{ y_1, \dots, y_d\}$ with each $y_i$ not eventually periodic and $\orb(y_i)$ pairwise disjoint, one obtains
      \[ K_0(C^*(\mathcal G_Y)) \cong \mathbb Z^d \oplus \coker(I - A), \quad K_1(C^*(\mathcal G_Y)) \cong  \ker(I - A). \]
\end{example}

This completes the theoretical development of orbit-breaking in Deaconu--Renault groupoids. In summary, we can orbit-break in a Deaconu--Renualt groupoid obtained from a surjective local homoeomorphism as long as the set $Y$ (where we are breaking orbits) contains no eventually periodic points and meets every (generalized) orbit at most once. If we furthermore assume that the surjective local homeomorphism satisfies some natural conditions, then the orbit-breaking subalgebra will be simple and purely infinite. Finally, there is a variant of the construction where we orbit-break in the groupoid obtained from the Deaconu--Renault groupoid by removing a fixed point (i.e., we can orbit-break in $\mathcal{G}|_U$ where $U=X \setminus \{w\}$, $w$ is a fixed point, and $\mathcal{G}$ is the Deaconu--Renault groupoid).

\section{Constructing $\mathcal{O}_2$} \label{sec:OBO2}
The rest of the present paper applies the orbit-breaking process to a particular class of systems. The goal is the construction of groupoid models for UCT Kirchberg algebras.

In particular, the goal of this section is the following: Given $n \in \N$, we construct a groupoid model for $\mathcal{O}_2$ where the groupoid is obtained from a locally expanding dynamical system where the relevant space has dimension $n$. More precisely, the space contains the $n$-torus as one of its connected components. This construction will allow us to orbit-break at quite general spaces by embedding into the $n$-torus. 

The starting point for the construction of our groupoid models for $\mathcal O_2$ is the following result, which is a special case of a theorem for the binary factors of shifts of finite type constructed by the second author \cite{MR4700629}.

\begin{theorem} \label{thm:caseIan}
There exists $X$ a finite dimensional, compact metric space and $\varphi: X\rightarrow X$ a continuous, surjective, local homeomorphism such that 
\begin{enumerate}
\item $X$ has no isolated points, $\varphi$ is mixing and locally expanding, the set of eventually periodic points of $\varphi$ is dense but has empty interior, 
\item the connected components of $X$ are circles and points and both cases occur, and
\item the groupoid $\mathrm{C}^*$-algebra associated to $(X, \varphi)$ is the Cuntz algebra $\mathcal{O}_2$, so in particular its $K$-theory is trivial in both degrees.
\end{enumerate}
\end{theorem}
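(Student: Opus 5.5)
The plan is to take $X = X^+_\xi$ and $\varphi = \sigma_\xi$, the binary system associated to a pair of graph embeddings $\xi^0, \xi^1 : F \to E$ chosen as in Example~\ref{Ex:O2Put}. Concretely, let $A_F = [2]$ and
\[
A_E = \left[ \begin{array}{cc} 5 & 3 \\ 5 & 5 \end{array} \right].
\]
Remark~\ref{rem:embH1H2} then supplies embeddings satisfying (H0)--(H2), because $2\cdot 2+1 = 5 \le 5$ in the only relevant entry. Condition (H3) holds because all entries of $A_E$ are positive, so $A_E$ is primitive. With these hypotheses in place, Theorem~\ref{thm:PutSys} applies directly.

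Items (1) and (2) of the statement are read off from Theorem~\ref{thm:PutSys}. Part (1) of that theorem says $X^+_\xi$ has no isolated points, $\sigma_\xi$ is mixing, open and locally expanding, and the eventually periodic points are dense with empty interior. Part (2) says the connected components are circles and points, with both occurring.

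The remaining properties need short arguments:
\begin{itemize}
\item[(a)] \emph{Finite dimensionality.} Covering dimension is at most one, since every component is a circle or a point. To make this rigorous I would use the description of $X^+_\xi$ in \cite[Section 7]{MR4700629} as a disjoint union of circles and points, or argue that $X$ is a compact metric space whose components have dimension $\le 1$ in a zero-dimensional pattern. A cleaner route is that $X^+_\xi$ is a quotient of a Cantor set by a relation with classes of size at most two, whose structure is explicitly given there.
\item[(b)] \emph{Surjective local homeomorphism.} Surjectivity holds because $\sigma_\xi$ is a factor of the surjective shift. Being a local homeomorphism follows from openness together with expansiveness, by Reddy's result quoted earlier.
\end{itemize}

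For (3), Theorem~\ref{thm:PutSys}(3) shows $C^*(\mathcal G)$ is a unital UCT Kirchberg algebra. Its $K$-theory is built from $\mathrm{coker}$ and $\ker$ of $I-A_E^T$ and $I-A_F^T$. Here $I - A_F^T = [-1]$ is invertible over $\Z$. Also
\[
I - A_E^T = \left[ \begin{array}{cc} -4 & -5 \\ -3 & -4 \end{array} \right],
\]
which has determinant $16-15=1$ and so is invertible over $\Z$. Hence both $K$-groups vanish. The class of the unit is then trivially $0 = [1_{\mathcal O_2}]$, and the Kirchberg--Phillips classification theorem gives $C^*(\mathcal G)\cong\mathcal O_2$.

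The main obstacle, such as it is, lies in the bookkeeping of (a): extracting a clean statement of finite covering dimension from \cite{MR4700629}. Everything else is a direct specialization of Theorem~\ref{thm:PutSys} and Example~\ref{Ex:O2Put}.
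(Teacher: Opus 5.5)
Your proposal is correct and is essentially the paper's proof. The paper also takes the binary system for $A_F=[2]$ and $A_E=\left[ \begin{array}{cc} 5 & 3 \\ 5 & 5 \end{array} \right]$, reads (1) and (2) off Theorem~\ref{thm:PutSys}, uses invertibility of $I-A_F^T$ and $I-A_E^T$ over $\Z$ to get vanishing $K$-theory, and concludes $C^*(\mathcal G)\cong\mathcal O_2$ by Kirchberg--Phillips. Your extra checks (surjectivity, local homeomorphism, finite dimensionality) cover points the paper leaves implicit; for finite dimensionality, the cleanest justification is that the quotient onto the totally disconnected compact space of components is a closed map whose fibres have dimension at most one, so $\dim X\le 1$ by Hurewicz's formula.
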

\begin{proof}
Since $\mathcal O_2$ is a unital UCT Kirchberg algebra, by Theorem~\ref{thm:PutSys}, we need only find finite directed graphs $E$ and $F$ with adjacency matrices $A_E$ and $A_F$ such that 
\begin{enumerate}
    \item $A_E$ is primitive (that is, the shift of finite type associated to $A_E$ is mixing),
    \item $I-A_E^T$ and $I-A_F^T$ are invertible over $\Z$,
    \item there are embeddings $\xi^0, \xi^1 : F \to E$ satisfying (H0)--(H3).
\end{enumerate}
Then the the binary system $(X_\xi, \sigma_\xi)$ associated to $(\xi^1, \xi^2)$ will satisfy (1) and (2) and the associated Deaconu--Renault groupoid $\mathcal G$ will satisfy $K_*(C^*(\mathcal G)) \cong 0$. It follows from the Kirchberg--Phillips classification theorem that $C^*(\mathcal G) \cong \mathcal O_2$. Such graphs are easy to obtain, for example let $F$ and $E$ be the directed graphs given by adjacency matrices
\[
[2] \hbox{ and } \left[ \begin{array}{cc} 5 & 3 \\ 5 & 5 \end{array} \right],
\]
respectively. There are many other choices. For further details, we refer the reader to Example~\ref{Ex:O2Put}.  
\end{proof}

Based on the previous result, there exist locally expanding dynamical systems such that the groupoid $\mathrm{C}^*$-algebra associated to the Deaconu--Renault groupoid obtained from it is $\mathcal{O}_2$ and the unit space of the groupoid contains the circle. We will require a groupoid model whose unit space has arbitrarily large finite dimension. First, we require the next theorem about the $K$-theory the $\mathrm{C}^*$-algebras obtained from products of groupoids associated to binary systems.

\begin{theorem} \label{thm:productDR}
Suppose that $(X_1, \varphi_1)$ and $(X_2, \varphi_2)$ are binary systems associated to embeddings of finite graphs with adjacency matrices $A_{E_1}, A_{F_1}$ and $A_{E_2}$, $A_{F_2}$, respectively. Then $(X_1 \times X_2, \varphi_1 \times \varphi_2)$ has no isolated points, $\varphi_1\times \varphi_2$ is locally expanding, and the set of eventually periodic points is dense but has empty interior. Moreover, $K_0(C^*(\mathcal{G}))$ is isomorphic to  
\begin{align*} K_0(C^*(\mathcal{G})) \cong& \frac{\Z^{k_1}}{(I-A^T_{E_1} \otimes A^T_{E_2})\Z^{k_1}} \oplus \ker(I-A^T_{E_1}\otimes A^T_{F_2})\\
& \oplus \frac{\Z^{k_4}}{(I-A^T_{F_1} \otimes A^T_{F_2})\Z^{k_4}} \oplus \ker(I-A^T_{F_1}\otimes A^T_{E_2}) , 
 \end{align*}
and $K_1(C^*(\mathcal{G}))$ is isomorphic to 
\begin{align*} K_1(C^*(\mathcal{G})) \cong& \frac{\Z^{k_2}}{(I-A^T_{E_1} \otimes A^T_{F_2})\Z^{k_2}} \oplus {\mathrm ker}(I-A^T_{F_1}\otimes A^T_{F_2})\\
& \oplus  \frac{\Z^{k_3}}{(I-A^T_{F_1} \otimes A^T_{E_2})\Z^{k_3}} \oplus \ker(I-A^T_{E_1}\otimes A^T_{E_2}) ,
\end{align*}
where $\mathcal{G}$ is the Deaconu--Renault groupoid associated to $(X_1\times X_2, \varphi_1\times \varphi_2)$ and $k_i$ for $i=1, \ldots 4$ is determined by the sizes of the relevant matrices (when each matrix is $n$ by $n$, we get that $k_i=n^2$ for each $i$).
\end{theorem}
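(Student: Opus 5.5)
The plan is to split the statement into its dynamical part and its $K$-theoretic part. The dynamical properties can be checked directly from Theorem~\ref{thm:PutSys}. The $K$-theory should come from identifying the Deaconu--Renault algebra of the product with a Ruelle-type algebra of a product Smale space and applying a Künneth-type computation.

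\textbf{Dynamical properties.} Both $X_1$ and $X_2$ have no isolated points, so neither does $X_1\times X_2$. For local expansion, use the max metric $d((x_1,x_2),(z_1,z_2))=\max\{d_1(x_1,z_1),d_2(x_2,z_2)\}$ with $\delta=\min\{\delta_{X_1},\delta_{X_2}\}$ and $\lambda=\min\{\lambda_{X_1},\lambda_{X_2}\}$. If $d<\delta$, choose the coordinate $i$ realizing the maximum; then $d(\varphi(x),\varphi(z))\ge d_i(\varphi_i(x_i),\varphi_i(z_i))\ge \lambda_{X_i}d_i(x_i,z_i)\ge\lambda\,d(x,z)$. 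Openness and the local homeomorphism property pass to products.

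A point $(x_1,x_2)$ is eventually periodic exactly when both coordinates are, since finiteness of the forward orbit is coordinatewise. Hence $(X_1\times X_2)_{\mathrm{ep}}=(X_1)_{\mathrm{ep}}\times(X_2)_{\mathrm{ep}}$. It is dense as a product of dense sets. It has empty interior because any nonempty open set contains a basic set $U_1\times U_2$, and $U_1\not\subseteq (X_1)_{\mathrm{ep}}$.

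\textbf{$K$-theory.} The groupoid of $\varphi_1\times\varphi_2$ is not the product groupoid $\mathcal G_1\times\mathcal G_2$, because the lag is a single integer rather than a pair. So I would not use the tensor product lemma directly. Instead I would argue as in the proof of Theorem~\ref{thm:PutSys}:
\begin{enumerate}
\item Form the natural solenoid of the product. It is the product Smale space $(X_1\times X_2)^{\sim}\cong \tilde X_1\times\tilde X_2$ with the product map, and it is a Smale space by \cite[Section 7.26]{Rue:ThermForm}.
\item Show $C^*(\mathcal G)$ is Morita equivalent to the stable Ruelle algebra $R^u$ of this Smale space, using the same reasoning as on page 39 of \cite{MR4700629}.
\item The stable (or unstable) groupoid of a product Smale space is the product of the stable groupoids, so $S(X_1\times X_2)\cong S(X_1)\otimes S(X_2)$. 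The Ruelle algebra is then the crossed product of this tensor product by $\mathbb Z$, acting diagonally through $\alpha_1\otimes\alpha_2$.
\item Apply the Pimsner--Voiculescu sequence. The $K$-theory of each $S(X_i)$ is given in \cite[Section 6]{MR4700629}: it is the inductive limit (dimension group) of $A_{E_i}^T$ in one degree and of $A_{F_i}^T$ in the other, with $\alpha_{i*}$ acting by the corresponding matrices.
\item Since these groups are torsion-free, the Künneth formula gives $K_*(S(X_1)\otimes S(X_2))$ as a sum of four tensor products of dimension groups, namely those of $A_{E_1}^T\otimes A_{E_2}^T$, $A_{E_1}^T\otimes A_{F_2}^T$, $A_{F_1}^T\otimes A_{E_2}^T$ and $A_{F_1}^T\otimes A_{F_2}^T$. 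The induced automorphism acts by the tensor-product matrix on each summand.
\end{enumerate}

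\textbf{Assembling the groups.} On each summand, $1-\alpha_*$ on the dimension group of a matrix $B$ has cokernel $\Z^k/(I-B)\Z^k$ and kernel $\ker(I-B)$. The Pimsner--Voiculescu sequence then splits, because the kernels are free.

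The degrees fall out as follows. The $E\otimes E$ and $F\otimes F$ summands lie in even degree of $S$. Their cokernels land in $K_0$ and their kernels in $K_1$. The mixed $E\otimes F$ summands lie in odd degree, so their roles swap. This reproduces exactly the stated formulas, with $k_i$ the products of the matrix sizes.

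\textbf{Main obstacle.} I expect step 2 to be the main obstacle. The Morita equivalence between the Deaconu--Renault algebra of the product and the Ruelle algebra of the product Smale space requires a careful check that the equivalence in \cite{MR4700629} does not use any special feature of binary systems beyond open, locally expanding dynamics. A second point to verify is that the degree-one classes coming from the circle components are tracked correctly in the Künneth bookkeeping.
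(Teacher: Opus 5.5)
Your proposal is correct and follows essentially the same route as the paper. The dynamical properties are checked coordinatewise, using the minimum of the expansion constants and the fact that a pair is eventually periodic iff both coordinates are. The $K$-theory comes from identifying the stable groupoid of the product with the product of the stable groupoids, applying the K\"unneth formula (using torsion-freeness from Putnam's Theorem~6.1), and then using the Pimsner--Voiculescu sequence; your degree bookkeeping reproduces the stated formulas. The Morita equivalence you flag in step~2 is the one the paper invokes in the proof of Theorem~\ref{thm:PutSys} and uses only implicitly for the product, so your outline is at the same level of detail as the paper's argument.
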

\begin{proof}
We have to check that $(X_1 \times X_2, \varphi_1 \times \varphi_2)$ has the following properties: 
\begin{enumerate}
\item $X_1\times X_2$ has no isolated points,
\item $\varphi_1\times \varphi_2$ is continuous, onto, a local homeomorphism, and mixing,
\item $\varphi_1\times \varphi_2$ is locally expanding, and
\item the set of eventually periodic points for $\varphi_1\times \varphi_2$ is dense but has empty interior.
\end{enumerate}
The proofs of the (1) and (2) are standard, so are omitted. Property (3) follows by taking $\gamma= {\mathrm{min} }\{ \gamma_1, \gamma_2 \}$ and $\lambda={\mathrm{min}}\{\lambda_1, \lambda_2\}$ where $\gamma_1$, $\gamma_2$, $\lambda_1$, $\lambda_2$ are the constants in Definition \ref{def:ExpMap}) for $(X_1, \varphi_1)$ and $(X_2, \varphi_2)$ respectively. Then, one checks that with this $\gamma$ and $\lambda$, the product system satisfies the definition of locally expanding (again, see Definition \ref{def:ExpMap}). Finally, Property (4) follows from the fact the $(x_1, x_2)$ is eventually periodic for $\varphi_1 \times \varphi_2$ if and only if $x_1$ is eventually periodic for $\varphi_1$ and $x_2$ is eventually periodic for $\varphi_2$.

We now consider the $K$-theory of $\mathcal G$. Let $\mathcal{G}^s_{X_1}$ and $\mathcal{G}^s_{X_2}$ be the stable groupoids of the Smale space associated to $(X_1, \varphi_1)$ and $(X_2, \varphi_2)$ respectively, as constructed in \cite[Section 4]{MR4700629}. Then basic Smale space theory implies that the stable groupoid associated to $(X_1 \times X_2, \varphi_1\times \varphi_2)$, denoted by $\mathcal{G}^s_{X_1\times X_2}$, is equal to $\mathcal{G}^s_{X_1}\times \mathcal{G}^s_{X_2}$ (where some care is required in terms of the choice of periodic points in the construction of the stable groupoids, see for example, \cite{PutSpi:Smale}). Hence
\[ C^*_r( \mathcal{G}^s_{X_1\times X_2}) =C^*_r(\mathcal{G}^s_{X_1}\times \mathcal{G}^s_{X_2}) \cong C^*_r(\mathcal{G}^s_{X_1}) \otimes C^*_r( \mathcal{G}^s_{X_2}),
\]
where the choice of tensor product is not relevant because the groupoids are amenable. Applying the K\"unneth formula and using the fact that the $K$-theories in this case are torsion free by \cite[Theorem 6.1]{MR4700629}, we get 
\begin{align*}
\lefteqn{K_0(C^*_r( \mathcal{G}^s_{X_1\times X_2}))} \\ &\cong \left( K_0(C^*_r(\mathcal{G}^s_{X_1})) \otimes K_0(C^*_r(\mathcal{G}^s_{X_2}))  \right) \oplus \left( K_1(C^*_r(\mathcal{G}^s_{X_1})) \otimes K_1(C^*_r(\mathcal{G}^s_{X_2}))  \right), \\
\lefteqn{K_1(C^*_r( \mathcal{G}^s_{X_1\times X_2}))} \\ & \cong \left( K_0(C^*_r(\mathcal{G}^s_{X_1})) \otimes K_1(C^*_r(\mathcal{G}^s_{X_2}))  \right) \oplus \left( K_1(C^*_r(\mathcal{G}^s_{X_1})) \otimes K_0(C^*_r(\mathcal{G}^s_{X_2}))  \right) .
\end{align*}
Since the isomorphisms respect the maps induces from $\varphi_1$, $\varphi_2$ and $\varphi_1 \times \varphi_2$, we can apply the Pimsner--Voiculescu six term exact sequence and \cite[Theorem 6.1]{MR4700629} along with its proof to get the required $K$-theory result. 
\end{proof} 

\begin{remark}
It is worth mentioning that the Deaconu--Renault groupoid associated to $(X_1 \times X_2, \varphi_{X_1}\times \varphi_{X_2})$ is not the product of the ones associated to $(X_1, \varphi_1)$ and $(X_2, \varphi_2)$.
\end{remark}

\begin{theorem} \label{thm:dimO2}
Given $n\in \N$, there exists $(X, \varphi)$ such that 
\begin{enumerate}
\item $X$ has no isolated points, $\varphi$ is  mixing, open, and locally expanding, the set eventually periodic points is dense, but has empty interior, 
\item the $n$-torus occurs as one of the connected component of $X$, and 
\item the groupoid $\mathrm{C}^*$-algebra associated to $(X, \varphi)$ is the Cuntz algebra $\mathcal{O}_2$, so in particular its $K$-theory is trivial in both degrees.
\end{enumerate}
\end{theorem}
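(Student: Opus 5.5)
The plan is to take the $n$-fold product of a single binary system and apply Theorem~\ref{thm:productDR} inductively.

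Let $(X_1,\varphi_1)$ be the binary system of Theorem~\ref{thm:caseIan}, built from $A_F=[2]$ and $A_E=\left[\begin{smallmatrix}5&3\\5&5\end{smallmatrix}\right]$ as in Example~\ref{Ex:O2Put}. Set $X=X_1^n$ and $\varphi=\varphi_1^{\times n}$. By Theorem~\ref{thm:PutSys}(2) some connected component of $X_1$ is a circle. A connected component of a finite product is the product of components, so $X$ contains $\mathbb T^n$ as a component, which gives (2). Property (1) follows by iterating the dynamical part of the proof of Theorem~\ref{thm:productDR}:
\begin{itemize}
\item the absence of isolated points, openness and local homeomorphism pass to products;
\item local expansion holds with the minimum of the constants;
\item density and empty interior of the eventually periodic points hold because these points are exactly the products of eventually periodic points.
\end{itemize}
Mixing of a product of mixing maps is checked on product open sets, since $\varphi^m(U_1\times U_2)=\varphi_1^m(U_1)\times\varphi_2^m(U_2)$ and each factor meets the target for all large $m$.

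For (3), I would use the Smale space argument in the proof of Theorem~\ref{thm:productDR}, extended to $n$ factors. The stable groupoid of the product is the product of the stable groupoids, so its $\mathrm{C}^*$-algebra is an $n$-fold tensor product. By the Künneth formula with torsion-free groups, together with the Pimsner--Voiculescu sequence as in \cite[Theorem 6.1]{MR4700629}, the $K$-theory of $C^*(\mathcal G)$ is a direct sum of cokernels and kernels of $I-B_1^T\otimes\cdots\otimes B_n^T$, where each $B_i\in\{A_E,A_F\}$.

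The key point is that each such $I-B^T$ is invertible over $\Z$, so every term vanishes. Note that $\det(I-A_E)=(1-5)^2-15=1$ and $\det(I-A_F)=-1$. Invertibility of $I-A_E$ and $I-A_F$ alone does not make $I-A_E\otimes A_F$ unimodular, so this must be verified or the matrices chosen to guarantee it. The determinant of $I-B$ is $\prod(1-\mu)$ over the eigenvalues $\mu$ of $B$, which are products of the eigenvalues of the factors. The eigenvalues of $A_F$ and $A_E$ are $2$ and $5\pm\sqrt{15}$, so the product of $1-\mu$ over the Galois-conjugate products is not obviously $\pm1$; for instance $\det(I-A_E\otimes[2])=\det(I-2A_E)$, which is not $\pm1$.

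Hence the main obstacle is choosing the graph data so that all tensor words yield unimodular $I-B$. I would first try to avoid the issue by taking the $n$-fold product of Theorem~\ref{thm:caseIan}'s system with a different base choice. Alternatively, I would take the product $Z$ and then form a further product $Z\times X_1$ with an $\mathcal O_2$-model. By the Künneth formula, the stable algebra of $Z\times X_1$ has $K$-theory $K_*(\text{stable}(Z))\otimes K_*(\text{stable}(X_1))$, while its Ruelle algebra is obtained via Pimsner--Voiculescu. Simpler still, I would note that $C^*(\mathcal G)$ is a unital UCT Kirchberg algebra, since the dynamical conditions of (1) and \cite{A-D:PI} give this. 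So it suffices to get $K_*=0$, which I would attempt by requiring $A_F=[2]$ and choosing $A_E$ so that $I-A_E^{\otimes a}\otimes 2^{b}$ is unimodular for all $a,b$. If no such choice exists, I would fall back on tensoring: take the product system with $X_1$ once more, so that the $K$-theory of the stable algebra is tensored with that of $X_1$. I expect this $K$-theory vanishing to be the genuinely delicate step, with everything else routine.
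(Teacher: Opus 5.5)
\paragraph{Gap.} Your arguments for (1) and (2), and your reduction of (3) to the unimodularity of every $I-B_1^T\otimes\cdots\otimes B_n^T$ (K\"unneth, Pimsner--Voiculescu, then Kirchberg--Phillips), use the same mechanism as the paper via Theorem~\ref{thm:productDR}. The gap is that you never exhibit graph data for which this unimodularity holds, so (3) is unproved for every $n\ge 2$. Your main candidate $X_1^n$ fails, as you observe, and so do the concrete repairs you list.

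First, any product in which every factor has $A_F=[2]$ contains the word $A_F^{\otimes n}=[2^n]$. Since $1-2^n\neq\pm1$ for $n\ge2$, this fails whatever $A_E$ is. Second, taking one more product with $X_1$ only tensors the stable $K$-theory with $\varinjlim(\Z^2,A_E^T)\oplus\Z[1/2]$. This creates new terms $I-2B$ and $I-A_E^T\otimes B$ that have no reason to be unimodular; with $Z=X_1^{n-1}$ it is literally your original candidate again. The vanishing of $K_*(\mathcal O_2)$ for $X_1$ concerns a single automorphism and does not survive products. This is precisely because the Deaconu--Renault groupoid of a product system is not the product of the Deaconu--Renault groupoids.

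\paragraph{Missing idea.} You need to make the product asymmetric so that the tensor words collapse. The paper takes $n-1$ copies of the binary system of Example~\ref{ex:IanBasEx}, whose matrices are the scalars $[1]$ and $[3]$. It multiplies these by one further binary system with matrices $B_F,B_E$ satisfying (H0)--(H3). Every word is then $3^aB$ with $B\in\{B_F,B_E\}$, the circle components still produce an $n$-torus, and (3) reduces to $\det(I-3^aB)=\pm1$.

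For $n=2$ the paper's choice works: $B_F=\left[\begin{smallmatrix}2&1\\1&1\end{smallmatrix}\right]$ and $B_E=\left[\begin{smallmatrix}5&3&1\\3&3&1\\12&10&4\end{smallmatrix}\right]$ give $\det(I-B_F)=-1$ and $\det(I-3B_F)=\det(I-B_E)=\det(I-3B_E)=1$.

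For $n\ge3$ you must check every $a\in\{0,\dots,n-1\}$, since each occurs (with multiplicity $\binom{n-1}{a}$). The paper's write-up checks only $a=0$ and $a=n-1$. Moreover, $\det(I-3^aB)\equiv 1$ modulo $3$ for $a\ge1$, so these values must all equal $1$. Hence $(\det(I-tB)-1)/t$ is a polynomial vanishing at $3,9,\dots,3^{n-1}$, and it is nonzero because a primitive $B$ is not nilpotent. It follows that $B$ must be at least $n\times n$.

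The $3\times3$ matrices displayed in the paper for $n\ge3$ do not meet this. For $n=3$, $\det(I-3A_{F_3})=37$, and $\det(I-A_{E_3})=16$ for every $n$. So that case still needs a genuine choice of the last factor.
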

\begin{proof}
For $n=1$, we can simply take $(X, \varphi)$ as in Theorem \ref{thm:caseIan}. For $n=2$, let $(X_1, \varphi_1)$ be the locally expanding system constructed as in Example~\ref{ex:IanBasEx} from the embeddings $\xi^0, \xi^1 : F_1 \to E_1$ where $F_1$ and $E_1$ are  defined by adjacency matrices
\[
A_{F_1}= [ 1 ] \hbox{ and } A_{E_1}=[3] .
\]
Next, let $(X_2, \varphi_2)$ be the binary system associated $\xi^0,  \xi^1 : F_2 \to E_2$, where $F_2$ and $E_2$ are given by adjacency matrices
\[
A_{F_2}=\left[ \begin{array}{cc} 2 & 1 \\ 1 & 1 \end{array} \right] \hbox{ and }A_{E_2}=\left[ \begin{array}{ccc} 5 & 3 & 1 \\ 3 & 3 & 1 \\ 12 & 10 & 4 \end{array} \right],
\]
where the existence of $\xi^0$ and $\xi^1$ follows from Remark \ref{rem:embH1H2}. Using Theorem \ref{thm:productDR}, we need to show that $\det(I-A_{F_1}\otimes A_{F_2})=1$, $\det(I-A_{E_1}\otimes A_{F_2})=1$, $\det(I-A_{F_1}\otimes A_{E_2})=1$, and $\det(I-A_{E_1}\otimes A_{E_2})=1$. This amounts to showing $\det(I-A_{F_2})=1$, $\det(I-3A_{F_2})$, $\det(I-A_{E_2})=1$, and $\det(I-3A_{E_2})=1$. This follows from direct calculation.

By Theorem \ref{thm:productDR} the $K$-theory of the groupoid $\mathrm{C}^*$-algebra associated to $(X_1\times X_2, \varphi_1\times \varphi_2)$ is trivial in both degrees. The Kirchberg--Phillips classification theorem then implies that it is $\mathcal{O}_2$. Finally, $X_1$ contains $S^1$ as one of its connected components and likewise for $X_2$, so $X_1 \times X_2$ contains the $2$-torus as one of its connected components. 

We generalize the previous construction to $n\ge 3$ as follows. As above, let $(X_1, \varphi_1)$ be the binary system associated with $\xi^0,  \xi^1 : F_1 \to E_1$, where $F_1$ and $E_1$ are given by adjacency matrices
\[
A_{F_1}= [ 1 ] \hbox{ and } A_{E_1}= [3],
\]
see Example \ref{ex:IanBasEx}.

 In addition, let $(X, \varphi)=(X_1 \times X_1 \times \ldots \times X_1 \times X_3, \varphi_1\times \varphi_1 \times \ldots \times \varphi_1 \times \varphi_3)$ where there are $(n-1)$-copies of $(X_1, \varphi_1)$ in the product that defines $(X, \varphi)$ and $(X_3, \varphi_3)$ is the binary system associated with adjacency matrices
\[
A_{F_3}=\left[ \begin{array}{ccc} 1 & 1 & 0 \\ 1 & 1 & 1 \\ 3^{n-1}-4 & 3^{n-1}-5 & 3^{n-1}-2 \end{array} \right] ,
\]
and  
\[ A_{E_3}=\left[ \begin{array}{ccc} 3 & 3 & 0 \\ 3 & 3 & 3 \\ 5\cdot 3^{n}-15 & 5 \cdot 3^{n}-14 & 3^{n+1}-6 \end{array} \right],
\]
where we note that Remark \ref{rem:embH1H2} implies that the required embeddings exist. 

In similar way to the proof of Theorem \ref{thm:productDR} and the case $n=2$ discussed above, we need to show that $\det(I-A_{F_3})=1$, $\det(I-A_{E_3})=1$, $\det(I-3^{n-1}\cdot A_{F_3})=1$, and $\det(I-3^{n-1}\cdot A_{E_3})=1$. This can be done by direct calculation (also see Remark \ref{rem:findMatrix} below). 

In summary, since $X=X_1 \times \ldots \times X_1\times X_3$, $X$ has the $n$-torus as one of its connected components. In addition, since the required matrices are invertible over the integers, the $\mathrm{C}^*$-algebra associated to the Deaconu--Renault of $(X, \varphi)$ has trivial $K$-theory in both degrees, so it is $\mathcal{O}_2$ by the Kirchberg--Phillips classification theorem.
\end{proof}
\begin{remark} \label{rem:findMatrix}
The matrix $A_{F_3}$ in the previous proof was found by considering the linear system obtained from the determinant conditions (for example, $\det(I-A_{F_3})=1$) for the matrix 
\[ \left[ \begin{array}{ccc} 1 & 1 & 0 \\ 1 & 1 & 1 \\ x & y & z \end{array} \right]. \] 
The matrix $A_{E_3}$ was found using the same method applied to the matrix
\[ \left[ \begin{array}{ccc} 3 & 3 & 0 \\ 3 & 3 & 3 \\ x & y & z \end{array} \right]. \] 
\end{remark}

\section{Orbit-breaking subalgebras of $\mathcal O_2$} \label{sec:OB}

In this section, we apply Theorem \ref{thm:BreExaSeq} to the system constructed in the previous section. In terms of the notation in Section \ref{sec:OBDR}, we use the orbit-breaking method where $Y_{ep}=Y \cap X_{ep}$ is the empty set. The resulting algebras are unital UCT Kirchberg algebras. 

\begin{theorem} \label{thm:mainInsideO2}
Suppose that $G_0$ and $G_1$ are countable abelian groups. Then there exists $(X, \varphi)$ and $Y \subseteq X$ such that 
\begin{enumerate}
\item the orbit-breaking groupoid $\mathcal{G}_Y$ is essentially principal and locally contracting, so that $C^*(\mathcal{G}_Y)$ is a unital UCT Kirchberg $\mathrm{C}^*$-algebra,
\item $K_0(C^*(\mathcal{G}_Y)) \cong \Z \oplus G_0$ and $K_1(C^*(\mathcal{G}_Y))\cong G_1$.
\end{enumerate}
\end{theorem}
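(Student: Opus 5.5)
Start with a finite-dimensional compact metric space $Y_0$ whose reduced $K$-theory realizes the given groups. Choose $Y_0$, for instance a suitable Moore-space type construction, with
\[
K^0(Y_0) \cong \Z \oplus G_0 \quad\text{and}\quad K^1(Y_0) \cong G_1 .
\]
For countable abelian groups one can take $Y_0$ to be a compact metric space of dimension at most $4$. For example, $Y_0$ can be a one-point compactification, or a mapping telescope of finite CW complexes compactified so that its \v{C}ech cohomology, hence $K$-theory, is a direct limit realizing $G_0$ and $G_1$ without torsion-in-$\operatorname{Ext}$ issues. This step uses the standard fact, as in \cite[Section 6]{DPSmain}, that every pair of countable abelian groups arises as $(\tilde K^0, K^1)$ of a finite-dimensional compactum. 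Take $n = 2\dim Y_0 + 1$. Then Theorem~\ref{thm:dimO2} gives a system $(X,\varphi)$ that is mixing, open and locally expanding, whose eventually periodic points are dense with empty interior, whose $n$-torus $T$ is a connected component, and with $C^*(\mathcal G)\cong\mathcal O_2$.

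Next, embed $Y_0$ into $T$ so that its image $Y$ satisfies the standing hypotheses: $Y$ contains no eventually periodic points, and $Y$ meets each generalized orbit at most once. By Lemma~\ref{lem:GoodY}, this means the following: whenever $x,y\in Y$ and $\varphi^k(x)=\varphi^l(y)$, we must have $x=y$ and $k=l$. The plan is a Baire category argument over the space of continuous maps $f\colon Y_0\to T$ with the uniform topology. For each triple $(k,l,\varepsilon)$, consider the set of $f$ such that
\[
d(\varphi^k f(a),\varphi^l f(b))>0 \quad\text{whenever } d(a,b)\ge\varepsilon \text{ or } k\neq l .
\]
Show that each such set is open and dense. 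Density is a general-position statement: the "bad" sets
\[
\{(p,q)\in T\times T : \varphi^k(p)=\varphi^l(q)\}
\]
are locally finite unions of graphs of local homeomorphisms, so they have codimension $n$ in $T\times T$, and $n>2\dim Y_0$ lets a small perturbation avoid them. The eventually periodic points in $T$ form a countable set, because each $\varphi^{-j}$ of a finite periodic set is finite. So the maps avoiding them also form a dense $G_\delta$. Intersecting countably many dense $G_\delta$ sets yields an embedding $f$ with the required properties. This step is the main technical obstacle. The delicate case is $k=l$ with $a$ near $b$: there one must use the local injectivity of $\varphi^k$ on small balls together with the fact that $f$ is injective.

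Finally, assemble the conclusions. Proposition~\ref{purelyInfProp} gives that $C^*(\mathcal G_Y)$ is simple and purely infinite, and its proof shows that $\mathcal G_Y$ is essentially principal and locally contracting. The algebra $C^*(\mathcal G_Y)$ is unital because $\mathcal G_Y^{(0)}=X$ is compact, and it is nuclear with the UCT because $\mathcal G_Y$ is amenable (Corollary~\ref{cor:EtaAndAm}) and Tu's theorem applies. Since $K_*(\mathcal O_2)=0$, Theorem~\ref{thm:BreExaSeq} with $Y_{\mathrm{ep}}=\emptyset$ yields
\[
K_0(C^*(\mathcal G_Y))\cong K^0(Y)\cong\Z\oplus G_0 \quad\text{and}\quad K_1(C^*(\mathcal G_Y))\cong K^1(Y)\cong G_1 ,
\]
which proves (2).
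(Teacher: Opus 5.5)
Your proof is correct in outline. The final assembly is the paper's: Proposition~\ref{purelyInfProp}, unitality, the UCT, and Theorem~\ref{thm:BreExaSeq} with $K_*(\mathcal{O}_2)=0$. Where you take a different route is the embedding step.

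\textbf{The paper's embedding.} The paper uses no genericity. It takes an arbitrary embedding $\iota\colon Y\to\mathbb{T}^{n-1}$ and places $Y$ in the slice $\{(\iota(y),z)\}$, where $z$ is a non-eventually-periodic point of the last factor $(X_n,\varphi_n)$. The last coordinate alone then rules out eventually periodic points and forces $k=l$ in any relation $\varphi^k(\iota(y),z)=\varphi^l(\iota(\hat y),z)$. This is quick and explicit, but it leaves the case $k=l$, $y\neq\hat y$ unaddressed; the paper only checks $\varphi^k(Y)\cap Y=\emptyset$ for $k\geq 1$. That case requires $(\varphi_1\times\cdots\times\varphi_{n-1})^k$ to be injective on $\iota(Y)$ for every $k$, i.e.\ $\iota(Y)-\iota(Y)$ must miss the nonzero dyadic points of $\mathbb{T}^{n-1}$. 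This fails for a general $\iota$, for instance any arc in a coordinate direction.

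\textbf{What your route buys, and what it costs.} Your $(k,k,\varepsilon)$ conditions provide exactly this injectivity, and your scheme treats all pairs $(k,l)$ uniformly. The cost is the density step, which you only sketch. A codimension count for graphs of local homeomorphisms is not by itself a general-position theorem for maps of the form $f\times f$. What makes it routine is that on the circles of Example~\ref{ex:IanBasEx} the map is doubling in the coordinate $\theta$. So on a torus built from such circles the bad sets are finite unions of affine subtori $\{2^kp-2^lq\equiv 0\}$, and a nerve approximation of $f$ with vertex images in general position gives density.

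\textbf{Where the delicacy actually lies.} It is not $k=l$ with $a$ near $b$: intersecting over $\varepsilon=1/m$ already covers every pair $a\neq b$, so local injectivity of $\varphi^k$ is not needed. The delicate case is $k\neq l$ with $a$ equal or close to $b$, where the bad set meets the diagonal at eventually periodic points. There the barycentric coefficients $2^ks_i-2^lt_i$ sum to $2^k-2^l\neq 0$. Hence for each fixed $(s,t)$ the bad vertex configurations form a codimension-$n$ affine subspace.

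\textbf{A cleaner combination.} Use the paper's slice to handle $k\neq l$ and eventual periodicity. Use your Baire argument only to choose $\iota$ with $\iota(a)-\iota(b)$ non-dyadic for $a\neq b$. This is a countable family of open dense conditions once $n-1>2\dim Y$.
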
 
\begin{proof}
Given countable abelian groups $G_0$ and $G_1$, there exists finite dimensional $Y$ such that $K^0(Y) \cong \Z \oplus G_0$ and $K^1(Y) \cong G_1$. Take $(X, \varphi)$ as in Theorem \ref{thm:dimO2} with $n$ sufficiently large, so that there exists an embedding of $Y$ denoted by $\iota$ into $(n-1)$-dimensional torus. 

Using this embedding, we embed $Y$ into one of the connected components of $X$ that is homeomorphic to the $n$-torus as follows. We use the notation $\mathbb{T}^n=\{ (z_1, \ldots, z_n) \mid z_i \in \mathbb{T} \}$ for the relevant $n$-torus inside $X$. Recall that by construction $X$ is the product $X_1\times \ldots \times X_n$ and $\varphi= \varphi_1 \times \ldots \times \varphi_n$ (see the proof of Theorem \ref{thm:dimO2} for details). Take $z \in X_n$ that is not eventually periodic for $\varphi_n$ and embed $Y$ via $y \mapsto (\iota(y), z) \in \mathbb{T}^n \subseteq X$. 

We must show that 
\begin{enumerate} 
\item the image of $Y$ doesn't contain any eventually periodic points and  
\item each generalized orbit meets the image of $Y$ at most once.
\end{enumerate}
For the first item, suppose there exists $k$ and $l$ such that $\varphi^k(z_1, \ldots, z_{n-1}, z)=\varphi^l(z_1, \ldots, z_{n-1}, z)$. Then, in particular, $\varphi_n^k(z)=\varphi_n^l(z)$, but this is not possible because the little  wee $z$ we chose is not eventually periodic. For the second item, given $(\iota(y), z)$, we have that $\varphi^k( \iota(y), z)$ cannot be of the form $(\iota(\hat{y}), z)$ (for some $\hat{y} \in Y$ and $k\ge 1$) because that would imply that $\varphi_n^k(z)=z$, which again is not possible because $z$ is not eventually periodic. 

Proposition \ref{purelyInfProp} implies that $\mathcal{G}_Y$ is essentially principal and locally contracting, so that $C^*(\mathcal{G}_Y)$ is unital, Kirchberg and satisfies the UCT. Applying Theorem \ref{thm:BreExaSeq} to this situation gives 
\begin{displaymath} 
\xymatrix{ K^0(Y) \ar[r] & K_0(C^*(\mathcal{G}_Y)) \ar[r]^-{\iota_*} & \{0\} \ar[d]^{\partial_{\mathrm OB}}\\
\{ 0 \} \ar[u]^{\partial_{\mathrm OB}} & K_1(C^*(\mathcal{G}_Y)) \ar[l]_-{\iota_*} & K^1(Y), \ar[l]}
\end{displaymath}
where we have used the fact that $C^*(\mathcal{G}_{(X, \varphi)})$ is $\mathcal{O}_2$, so its $K$-theory is trivial in both degrees. It follows from this exact sequence that $K_0(C^*(\mathcal{G}_Y)) \cong K^0(Y) \cong \Z \oplus G_0$ and $K_1(C^*(\mathcal{G}_Y))\cong K^1(Y) \cong G_1$.
\end{proof}

\begin{theorem} \label{thm:Stabcont}
Suppose that $G_0$ and $G_1$ are countable abelian groups and $A$ is the unique stable UCT Kirchberg algebra with $K_0(A) \cong \Z \oplus G_0$ and $K_1(A) \cong G_1$. Then there exists $(X, \varphi)$ and $Y \subseteq X$ such that 
\begin{enumerate}
\item $C^*_r(\mathcal{G}_Y \times \mathcal{R})\cong A$ and
\item $C^*_r(\mathcal{G}_Y \times \mathcal{R}) \subseteq \mathcal{O}_{\infty}\otimes \mathcal{K} \subseteq \mathcal{O}_2 \otimes \mathcal{K}$
\end{enumerate}
where $\mathcal{R}$ is an amenable \'etale equivalence relation whose $\mathrm{C}^*$-algebra is the compact operators $\mathcal{K}$ (for example, take $\mathcal R$ to be $\N$ with the full equivalence relation).
\end{theorem}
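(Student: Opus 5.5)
The plan is to take $(X,\varphi)$ and $Y$ exactly as produced by Theorem~\ref{thm:mainInsideO2} for the groups $G_0$ and $G_1$, and then tensor with $\mathcal R$. Both items then reduce to facts already established together with standard properties of $\mathcal K$ and $\mathcal O_\infty$.

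\textbf{Step 1: identify the algebra.} The groupoid $\mathcal G_Y$ is amenable and \'etale by Corollary~\ref{cor:EtaAndAm}, and $\mathcal R$ is amenable. The product lemma in the preliminaries therefore gives
\[
C^*_r(\mathcal G_Y\times\mathcal R)\cong C^*(\mathcal G_Y)\otimes\mathcal K .
\]
By Theorem~\ref{thm:mainInsideO2}, $C^*(\mathcal G_Y)$ is a unital UCT Kirchberg algebra with $K_0\cong \Z\oplus G_0$ and $K_1\cong G_1$. Stabilizing preserves simplicity, nuclearity, pure infiniteness and the UCT, and it does not change $K$-theory. So $C^*(\mathcal G_Y)\otimes\mathcal K$ is a stable UCT Kirchberg algebra with the same $K$-groups as $A$. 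The Kirchberg--Phillips classification theorem then gives an isomorphism with $A$, which is item (1).

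\textbf{Step 2: the first inclusion in item (2).} I would first embed $C^*(\mathcal G_Y)$ unitally into $\mathcal O_\infty\otimes$ something and then stabilize. The cleanest route is Kirchberg's absorption: any Kirchberg algebra $B$ satisfies $B\cong B\otimes\mathcal O_\infty$. Hence
\[
C^*(\mathcal G_Y)\otimes\mathcal K\cong C^*(\mathcal G_Y)\otimes\mathcal O_\infty\otimes\mathcal K .
\]
This alone does not place it inside $\mathcal O_\infty\otimes\mathcal K$, so a second ingredient is needed. Kirchberg's embedding theorem says every separable exact $\mathrm{C}^*$-algebra embeds in $\mathcal O_2\subseteq\mathcal O_\infty$, which gives $C^*(\mathcal G_Y)\otimes\mathcal K\hookrightarrow\mathcal O_\infty\otimes\mathcal K$. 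If one wants the inclusion to come from the groupoid picture instead, the argument runs as follows. Theorem~\ref{thm:subsetOrbitBreak} and Proposition~\ref{prop:opensub} make $\mathcal G_Y\times\mathcal R$ an open subgroupoid of $\mathcal G\times\mathcal R$. Since $C^*(\mathcal G)\cong\mathcal O_2$, this gives an explicit embedding
\[
C^*_r(\mathcal G_Y\times\mathcal R)\subseteq C^*_r(\mathcal G\times\mathcal R)\cong\mathcal O_2\otimes\mathcal K .
\]
The middle algebra $\mathcal O_\infty\otimes\mathcal K$ is then inserted using a unital embedding $\mathcal O_2\into\mathcal O_\infty$, or $\mathcal O_\infty\into\mathcal O_2$ as needed.

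\textbf{Step 3: the second inclusion in item (2).} Take a unital embedding $\mathcal O_\infty\into\mathcal O_2$, which exists by Kirchberg's embedding theorem, and tensor it with $\operatorname{id}_{\mathcal K}$.

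The main obstacle is making the chain of inclusions in item (2) consistent with the explicit groupoid embedding, so that the containment in $\mathcal O_2\otimes\mathcal K$ is the one induced by $\mathcal G_Y\times\mathcal R\subseteq\mathcal G\times\mathcal R$ rather than an abstract one. I would handle this by identifying $\mathcal O_2\otimes\mathcal K$ with $\mathcal O_2\otimes\mathcal O_\infty\otimes\mathcal K$, using $\mathcal O_2\otimes\mathcal O_\infty\cong\mathcal O_2$. That identification lets $\mathcal O_\infty\otimes\mathcal K$ be realized as an intermediate subalgebra containing the image.
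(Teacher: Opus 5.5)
Your Step~1 matches the paper. Item~(2) has a genuine gap once it is read as intended: a chain of inclusions induced by open subgroupoids. This is what the paper proves, and it is what you yourself flag as the main obstacle.

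Your first route, via Kirchberg's embedding theorem, yields only abstract embeddings. It is logically valid for the bare statement, but it makes~(2) content-free, since it applies to every separable exact algebra. Your attempt at the groupoid-level version fails at two points.
\begin{itemize}
\item There is no unital embedding $\mathcal O_2\into\mathcal O_\infty$. A unital copy of $\mathcal O_2$ forces $[1]=[s_1s_1^*]+[s_2s_2^*]=2[1]$, so $[1]=0$ in $K_0$. But $[1_{\mathcal O_\infty}]$ generates $K_0(\mathcal O_\infty)\cong\Z$.
\item The isomorphism $\mathcal O_2\otimes\mathcal K\cong\mathcal O_2\otimes\mathcal O_\infty\otimes\mathcal K$ is abstract. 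The subalgebra $1\otimes\mathcal O_\infty\otimes\mathcal K$ has no reason to contain the image of $C^*_r(\mathcal G_Y\times\mathcal R)$ under the inclusion coming from $\mathcal G_Y\times\mathcal R\subseteq\mathcal G\times\mathcal R$. So no intermediate algebra is actually produced.
\end{itemize}

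The missing idea is to build the intermediate algebra from another orbit-breaking groupoid.
\begin{enumerate}
\item Fix $y\in Y$. Then $\{y\}$ is closed, contains no eventually periodic points, and meets each orbit at most once.
\item Theorem~\ref{thm:subsetOrbitBreak} (with $Y_1=\{y\}\subseteq Y_2=Y$) and Proposition~\ref{prop:opensub} give open inclusions $\mathcal G_Y\subseteq\mathcal G_{\{y\}}\subseteq\mathcal G$. Hence $\mathcal G_Y\times\mathcal R\subseteq\mathcal G_{\{y\}}\times\mathcal R\subseteq\mathcal G\times\mathcal R$, also open.
\item As in Theorem~\ref{thm:mainInsideO2}, Proposition~\ref{purelyInfProp} makes $C^*(\mathcal G_{\{y\}})$ a unital UCT Kirchberg algebra.
\item Theorem~\ref{thm:BreExaSeq}, with $K_*(C^*(\mathcal G))=K_*(\mathcal O_2)=0$, gives $K_0(C^*(\mathcal G_{\{y\}}))\cong K^0(\mathrm{pt})=\Z$ and $K_1(C^*(\mathcal G_{\{y\}}))=0$.
\item By Kirchberg--Phillips, $C^*(\mathcal G_{\{y\}}\times\mathcal R)\cong\mathcal O_\infty\otimes\mathcal K$.
\end{enumerate}
This gives both inclusions in~(2) at the groupoid level, $C^*_r(\mathcal G_Y\times\mathcal R)\subseteq C^*_r(\mathcal G_{\{y\}}\times\mathcal R)\subseteq C^*_r(\mathcal G\times\mathcal R)\cong\mathcal O_2\otimes\mathcal K$, with no appeal to Kirchberg's embedding theorem.
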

\begin{proof}
By the Kirchberg--Phillips theorem, $K$-theory is a complete isomorphism invariant for stable UCT Kirchberg algebras. Let $(X, \varphi)$ and $Y\subseteq X$ be as in the previous theorem to obtain a unital UCT Kirchberg algebra $C^*_r(\mathcal{G}_Y)$ with the required $K$-theory groups. Then $C^*_r(\mathcal{G}_Y \times \mathcal{R})$ is a stable UCT Kirchberg algebra and hence is isomorphic to the given $A$. For any point non-eventually periodic point $x \in X$, we have $K_0(\mathcal G_{\{x\}}) = \mathbb Z$ and $K_1 (\mathcal G_{\{x\}})= 0$. In particular, this holds for $y$, and gives $C^*(\mathcal G_{\{y\}} \times \mathcal R) \cong \mathcal O_\infty \otimes \mathcal K$. Since since $Y$ contains $\{y\}$ as a closed subset, (2) now follows from Theorem \ref{thm:subsetOrbitBreak}.
\end{proof}

\begin{theorem} \label{thm:maps}
   Suppose $G_0, H_0$ and $G_1, H_1$ are countable abelian groups and $\psi_* : G_* \to H_*$ is a group homomorphism. Then there exist compact metric spaces $Y_1$ and $Y_2$, and a continuous embedding $\alpha : Y_1 \to Y_2$ such that 
\begin{displaymath} 
\xymatrix{ \tilde{K}^*(Y_2) \ar[d]_{\cong} \ar[r]^{\alpha_*} & \tilde{K}^*(Y_1) \ar[d]^{\cong} \\
G_* \ar[r]_{\psi} & H_* .
}
\end{displaymath}
Note that $\tilde{K}$ denotes the reduced $K$-theory, so $K^0(Y_2) \cong \mathbb Z \oplus G_0$, $K^1(Y_2) \cong G_1$ and likewise for $Y_1$.
\end{theorem}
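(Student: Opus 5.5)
The plan is to realize $\psi$ at the level of spaces by a map $f\colon Y_H\to Y_G$ between compact, finite dimensional metric spaces with $f^*=\psi$ on reduced $K$-theory. I would then convert $f$ into an embedding using a mapping cylinder. Set $Y_1:=Y_H$ and $Y_2:=M_f=(Y_H\times[0,1])\sqcup Y_G/\bigl((y,1)\sim f(y)\bigr)$, and let $\alpha$ be the inclusion $y\mapsto (y,0)$. Then $Y_2$ is a compact metric space of finite dimension. The retraction $r\colon M_f\to Y_G$ is a homotopy equivalence with $r\circ\alpha=f$, so $\alpha^*\circ r^*=f^*=\psi$ with $r^*$ an isomorphism. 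This gives the commuting square of the statement, with $\tilde K^*(Y_2)\cong \tilde K^*(Y_G)\cong G_*$ and $\tilde K^*(Y_1)\cong H_*$. Finite dimensionality matters because the later application embeds $Y_2$ into a torus component as in Theorem~\ref{thm:mainInsideO2}.

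\textbf{Constructing $Y_G$.} Write $G_*=\varinjlim G^{(k)}_*$ as a sequential direct limit of finitely generated $\Z/2$-graded groups. Each $G^{(k)}$ is realized as $\tilde K^*$ of a finite CW complex $P_k$: a wedge of spheres $S^2$ and $S^1$ together with mod-$m$ Moore spaces in even and odd degree, all of bounded dimension. Next realize each bonding map $G^{(k)}\to G^{(k+1)}$ contravariantly by a cellular map $P_{k+1}\to P_k$. For free summands this uses degree maps between spheres. For the torsion summands it uses maps of Moore spaces, suspending once if necessary so that we are in the stable range. There the $K$-theoretic universal coefficient sequence says that every homomorphism on $K$-theory is induced by some map, since the Ext term only obstructs injectivity, not surjectivity. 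Finally set $Y_G:=\varprojlim P_k$. This is a compact metric space of finite dimension, and by continuity of $K$-theory $\tilde K^*(Y_G)\cong\varinjlim \tilde K^*(P_k)\cong G_*$. There is no $\lim^1$ issue, since for inverse limits of compact spaces $K^*$ is exactly the colimit. The space $Y_H$ is built the same way from complexes $Q_k$.

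\textbf{Constructing $f$.} Since each $G^{(k)}$ is finitely generated, $\psi$ restricted to $G^{(k)}$ factors through some $H^{(m_k)}$. After passing to subsequences, $\psi$ becomes a ladder of homomorphisms $G^{(k)}\to H^{(k)}$ commuting with the bonding maps. Realize each rung by a map $f_k\colon Q_k\to P_k$ as above.

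\textbf{Main obstacle.} The squares $P_k\leftarrow P_{k+1}$, $Q_k\leftarrow Q_{k+1}$ commute on $K$-theory, and hence only up to homotopy at best. On the nose they need not commute, and the inverse limit needs strict commutativity; maps realized in the stable range may even fail to be homotopic. I would first try to fix this by enlarging the spaces. Replace each $P_k$ (and $Q_k$) by an iterated mapping cylinder of the tower, built so that the bonding maps become cofibrations. Then correct $f_{k+1}$ inductively: choose it so that its composite with the bonding map agrees on $K$-theory with the previously chosen map, and use homotopy extension, correcting by a representative of the Ext-class discrepancy when needed, to make it strictly compatible. Since $K$-theory of the limit only depends on the colimit of the $f_k^*$, any such strictly compatible choice gives $f^*=\psi$. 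Dimension stays bounded because every stage uses complexes of bounded dimension together with one extra cylinder coordinate.
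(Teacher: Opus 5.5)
\textbf{There is a genuine gap.} Your reductions are sound. Replacing $f$ by the inclusion $Y_H\into M_f$ works as well as the paper's graph trick $y\mapsto(\alpha(y),\iota(y))$. Realizing $G_*$ as $\tilde K^*$ of an inverse limit of finite complexes is the right variance for contravariant $K$-theory, and it keeps the spaces compact. The paper instead takes a direct limit of CW complexes built by attaching cells; that limit is not compact, and its $K^*$ is governed by $\varprojlim$ and $\varprojlim^1$ rather than $\varinjlim$. But you never actually produce $f\colon Y_H\to Y_G$: the step you flag as the main obstacle is the whole difficulty, and two things are missing there.

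\textbf{First, homotopy commutativity.} Write $p_k\colon P_{k+1}\to P_k$ and $q_k\colon Q_{k+1}\to Q_k$ for the bonding maps. Maps between your complexes are not determined by $K$-theory: $2\cdot\mathrm{id}$ on $M(\Z/2,n)$, $n\ge 3$, is essential but induces $0$. So $d=f_kq_k-p_kf_{k+1}$ is a $K$-trivial class that can be nonzero. ``Correcting by the Ext-class discrepancy'' presupposes $d=p_k\circ e+f_k\circ e'$ with $e,e'$ $K$-trivial, and this can fail outright. Take $G^{(k)}=H^{(k)}=H^{(k+1)}=\Z/2$, $G^{(k+1)}=0$, the $H$-bonding map the identity, and $\psi_k=0$ realized by the essential $K$-trivial self-map of $M(\Z/2,n)$. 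Then $p_kf_{k+1}$ is null for every choice, while $f_kq_k$ is essential for every realization $q_k$. So you need a strategy for the choices.

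One strategy that works: take the $G^{(k)}$ to be increasing finitely generated subgroups, so the bonding maps are injective. Place free summands in the top dimension of each parity block and work parity by parity, writing $T_X,F_X$ for the torsion and free homology of $X$. Pontryagin duality then makes $p_k$ surjective on $T$. The $K$-trivial part of $[Q_{k+1},P]$ is $\mathrm{Ext}(T_{Q_{k+1}},T_P\otimes\Z/2)\oplus\mathrm{Hom}(F_{Q_{k+1}},T_P\otimes\Z/2)$, which is right exact in $T_P$. Hence $d=p_k\circ e$ for a $K$-trivial $e$, and you replace $f_{k+1}$ by $f_{k+1}+e$.

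Relatedly, the ``$K$-theoretic universal coefficient sequence'' you cite is the UCT for $KK$. It says nothing about homotopy classes of maps of spaces, so realizability of the rungs and bonding maps has to be checked by hand (degree maps, pinch maps, maps of Moore spaces). That is possible with such placements.

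\textbf{Second, strictification.} Even with homotopy-commutative squares, cofibrations and homotopy extension are the wrong tools. They make a cell-attaching direct limit strict, as in the paper. An inverse limit instead needs $p_k\circ f_{k+1}=f_k\circ q_k$ on the nose, which is a lifting problem along $p_k$, and $p_k$ is not a fibration. Iterated mapping cylinders of the tower do not help either: they deformation retract onto the target end and so lose the $K$-theory of the top.

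What works is to build the homotopies into the spaces. Let $H_k$ satisfy $H_k(\cdot,0)=p_kf_{k+1}$ and $H_k(\cdot,1)=f_kq_k$, and set $Z_k=M_{f_k}=(Q_k\times[0,1]\sqcup P_k)/((x,1)\sim f_k(x))$. Define $\Phi_k\colon Z_{k+1}\to Z_k$ to be:
\begin{itemize}
\item $p_k$ on $P_{k+1}$;
\item $(x,t)\mapsto(q_k(x),2t)$ for $t\le 1/2$;
\item $(x,t)\mapsto H_k(x,2-2t)$ for $t\ge 1/2$.
\end{itemize}
This is well defined and equals $q_k$ on the bottom copies, so $Y_1=\varprojlim Q_k$ embeds in $Y_2=\varprojlim Z_k$. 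The inclusions $P_k\into Z_k$ are homotopy equivalences intertwining $p_k$ and $\Phi_k$. The bottom inclusion $Q_k\times\{0\}\into Z_k$ is homotopic to $f_k$ followed by $P_k\into Z_k$. Continuity of $K^*$ then gives $\alpha^*=\varinjlim\psi_k=\psi$ directly, and your separate mapping-cylinder step becomes unnecessary.
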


\begin{proof}
  First, observe that it is enough to construct a continuous map $\alpha$ which is not necessarily an embedding. Indeed, if $\alpha : Y_1 \to Y_2$ is not already an embedding, let $d \in \mathbb N$ be large enough so that there exists an embedding $\iota : Y_2 \into [0,1]^d$. Given $\alpha : Y_1 \to Y_2$, let 
    \[ \tilde{\alpha} : Y_1 \to Y_2 \times [0,1]^d, \quad y \mapsto (\alpha(y), \iota(y)).\]
    Then $\tilde{\alpha}$ is an embedding and $\tilde{\alpha}^*$ agrees with $\alpha^*$ via the projection map $p : Y_2 \times [0,1]^d \to Y_2$ .

  Next, observe that by taking a wedge products of the space $Y^{(0)}_i$ with the suspension of the space $Y^{(1)}_i$, $i = 1, 2$, where  $\tilde{K}_0(Y^{(0)}_1) = H_0$,  $\tilde{K}^0(Y^{(1)}_1) = H_1$, $\tilde{K}_0(Y^{(0)}_2) = G_0$, $\tilde{K}_0(Y^{(1)}_2) = G_1$ and $K_1(Y^{(j)}_i) = 0$, $j = 0,1$, $i=1, 2$, it is enough to show the result holds when $G_1 = H_1 = \{0\}$. 
  
Suppose that $G_0$ and $H_0$ are finitely generated. Let 
    \[ \xymatrix{0 \ar[r] & F_0 \ar[r] & F_1 \ar[r] & G_0  \ar[r]  & 0}
    \]
    and
     \[ \xymatrix{0 \ar[r] & E_0 \ar[r] & E_1 \ar[r] & H_0  \ar[r]  & 0}
    \]  
    be free resolutions of $G_0$ and $H_0$, respectively. We note that $F_0, F_1, E_0, E_1$ are each finitely generated (and, of course, free). 
    
    Since $\psi : G_0 \to H_0$ is a group homomorphism, and $E_0, E_1$ are free and hence projective, we obtain a commutative diagram
   \[ \xymatrix{0 \ar[r]  & F_0 \ar[r]^{\gamma} \ar[d]_{\phi_0} & F_1 \ar[d]_{\phi_1} \ar[r] & G_0  \ar[r] \ar[d]^{\psi}  & 0 \\
  0 \ar[r] & E_0 \ar[r]_\kappa & E_1 \ar[r] & H_0  \ar[r]  & 0  .}
    \]  

     Define $X_i$ to be a wedge of $\rank(F_i)$-many copies of $S^2$, and similarly, let $Z_i$ be a wedge of $\rank(E_i)$ copies of $S^2$. Then $\tilde{K}^0(X_i) = F_i$ and $\tilde{K}^0(Z_i) = E_i$. Let $g : X_1 \to X_0$, $h : Z_1 \to Z_0$ and $f_i : Z_i \to X_i$, $i = 1, 2$ be continuous maps inducing the $\gamma, \kappa, \phi_0, \phi_1$ in the commutative diagram.

     Let $Y_1$ be the mapping cone of $h$ and $Y_2$ the mapping cone of $g$. Then we obtain a continuous map $\alpha : Y_1 \to Y_2$ with $K_0(\alpha) = \psi$ such that the diagram
     
    \[ \xymatrix{X_0  &\ar[l]_{g}  X_1   & \ar[l] Y_2 \\
  Z_0 \ar[u]^{f_0} &  \ar[l]^h Z_1  \ar[u]^{f_1} & Y_1  \ar[u]_\alpha \ar[l]}
    \]   
commutes.

If $G_0, H_0$ are countably generated, find $\varinjlim G^{(n)} = G_0$ and $\varinjlim H^{(n)} = H_0$ where each $G^{(n)}$, respectively $H^{(n)}$ is finitely generated. 

For $n \in \mathbb N$, apply the above argument  to obtain the diagram 
   \[ \xymatrix{X^{(n)}_0  &\ar[l]_{g^{(n)}}  X^{(n)}_1   & \ar[l] Y_2^{(n)} \\
  Z^{(n)}_0 \ar[u]^{f^{(n)}_0} &  \ar[l]^{h^{(n)}} Z^{(n)}_1  \ar[u]^{f^{(n)}_1} & Y^{(n)}_1  \ar[u]_{\alpha^{(n)}} \ar[l]}.
    \]   
At stage $n+1$, choose the free resolutions so that  $X^{(n+1)}_i$ and $Z^{(n+1)}_i$ are obtained by attaching finitely many cells. Then we have an induced map $Y_i^{(n)} \to Y_i^{(n+1)}$ which is obtained by attaching finitely many cells, and where $\alpha^{(n+1)}|_{Y_1^{(n)}}= \alpha^{(n)}$.
Let $Y_i = \varinjlim (Y_i^{(n)} \to  Y_i^{(n+1)})$. Each $Y_i$ is a CW-inductive limit, via CW-inclusions, with no cells of dimension greater than $3$ being attached. Thus $Y_i$ is a CW complex with covering dimension at most $3$.  By continuity of reduced $K$-theory for CW-direct limits,
\[
K_0(Y_1) = \varinjlim K_0(Y_1^{(n)}) = H_0, \qquad K_0(Y_2) = \varinjlim K_0(Y_2^{(n)}) = G_0.
\]
Since the maps $\alpha^{(n)}$ are compatible with the structure maps, they induce a continuous map $\alpha = \varinjlim \alpha^{(n)} : Y_1 \to Y_2$, which is the desired map.
\end{proof}

Note that in the next theorem, the embeddings obtained do not require the UCT. 

\begin{theorem} \label{thm:inclusions}
    Suppose $G_0, H_0$ and $G_1, H_1$ are countable abelian groups and $\psi_* : G_* \to H_*$ is a group homomorphism. Then there exists $(X, \varphi)$ and $Y_1, Y_2$ finite-dimensional compact metric spaces such that 
    \begin{enumerate}
        \item  $X$ has no isolated points, $\varphi$ is irreducible, open and locally expanding, and $X_{\mathrm{ep}}$ is dense and has empty interior,
  \item there are embeddings $Y_1 \into Y_2 \into X$ where the image of $Y_2$ meets every orbit at most once and contains no eventually periodic points, 
  \item $C^*(\mathcal G_{Y_2})$ is a subalgebra of $C^*(\mathcal G_{Y_1})$,
  \item the diagrams
\begin{displaymath} 
\xymatrix{ K_0(C^*(\mathcal G_{Y_2})) \ar[d]_{\cong} \ar[r]^{\iota_*} & K_0(C^*(\mathcal G_{Y_1})) \ar[d]^{\cong} \\
\mathbb{Z} \oplus G_0 \ar[r]_{\id \oplus \psi_*} & \mathbb Z \oplus H_0,} \quad \xymatrix{ K_1(C^*(\mathcal G_{Y_2})) \ar[d]_{\cong} \ar[r]^{\iota_*} & K_1(C^*(\mathcal G_{Y_1})) \ar[d]^{\cong} \\
 G_1 \ar[r]_{\psi_*} & H_1 ,
}
\end{displaymath}
commute, where $\iota : C^*(\mathcal G_{Y_2}) \into C^*(\mathcal G_{Y_1})$ is the inclusion map.
   \end{enumerate} 
\end{theorem}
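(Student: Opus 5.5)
Apply Theorem~\ref{thm:maps} to $\psi_*$ to obtain compact metric spaces $Y_1,Y_2$ and a continuous embedding $\alpha : Y_1 \to Y_2$ such that $\alpha^* : \tilde K^*(Y_2) \to \tilde K^*(Y_1)$ is identified with $\psi_*$. By construction these are finite-dimensional CW-type spaces, possibly after the modification $\tilde\alpha$ into $Y_2 \times [0,1]^d$, which only raises the dimension by a finite amount. Choose $n$ large so that $Y_2$ embeds in $\mathbb{T}^{n-1}$ via some $\iota$. Take $(X,\varphi)$ as in Theorem~\ref{thm:dimO2} for this $n$; item (1) then holds, since mixing implies irreducible. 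Fix $z \in X_n$ that is not eventually periodic for $\varphi_n$, and embed $Y_2 \hookrightarrow \mathbb{T}^n \subseteq X$ by $y \mapsto (\iota(y), z)$. Identify $Y_1$ with $\alpha(Y_1) \subseteq Y_2$, which is closed because $Y_1$ is compact. The argument in the proof of Theorem~\ref{thm:mainInsideO2} shows that the image of $Y_2$ contains no eventually periodic points and meets every generalized orbit at most once, and then so does $Y_1$. This gives (2). Item (3) is exactly Theorem~\ref{thm:subsetOrbitBreak}(1) applied to $Y_1 \subset Y_2$.

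For (4), the plan is to compare the two six-term sequences of Theorem~\ref{thm:BreExaSeq}. Since $K_*(C^*(\mathcal G)) = K_*(\mathcal O_2) = 0$, both connecting maps $K^*(Y_i) \to K_*(C^*(\mathcal G_{Y_i}))$ are isomorphisms. It then suffices to show naturality, namely that the square
\[
\xymatrix{ K^*(Y_2) \ar[r]^-{\cong} \ar[d]_{\alpha^*} & K_*(C^*(\mathcal G_{Y_2})) \ar[d]^{\iota_*} \\ K^*(Y_1) \ar[r]^-{\cong} & K_*(C^*(\mathcal G_{Y_1})) }
\]
commutes. Granting this, the identification $K^0(Y_i) \cong \Z \oplus \tilde K^0(Y_i)$ is natural for maps of spaces. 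Under $\alpha^*$ the $\Z$ summand, given by the rank at a point, maps by the identity, and the reduced part maps by $\psi_*$. This yields the diagrams in (4), with the $\Z$ summand matching the one in Theorem~\ref{thm:mainInsideO2}.

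The main obstacle is establishing this naturality, since \cite[Theorem 2.4]{Put:K-theoryGroupoids} is stated for a single pair. I would first try the following. Let $L_1, L_2$ be the sets $L$ for $Y_1, Y_2$, and $\mathcal H_1, \mathcal H_2$ the corresponding groupoids. Then $L_1 \subseteq L_2$, $\mathcal H_1$ is the reduction of $\mathcal H_2$ to the saturation of $Y_1$, and $\mathcal G_{Y_2} \subseteq \mathcal G_{Y_1}$. The map $K_*(C^*(\mathcal H)) \to K_*(C^*(\mathcal G_Y))$ in Putnam's sequence comes from an extension (or a Rieffel/Morita bimodule) built from $C^*(L)$. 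I would check that the inclusions above give a morphism of these exact sequences: $C^*(\mathcal G_{Y_2}) \to C^*(\mathcal G_{Y_1})$ on the middle term and restriction $C^*(\mathcal H_2) \to C^*(\mathcal H_1)$ on the $\mathcal H$-term. Under the Morita equivalences of Proposition~\ref{prop:transversal}, this restriction corresponds to restriction $C_0(Y_2) \to C_0(Y_1)$, that is, to $\alpha^*$. If the direction of the maps or a sign becomes awkward, the fallback is to verify commutativity on generators. One would use the local description of the map $K^0(Y) \to K_0(C^*(\mathcal G_Y))$ via the $\mathcal G$-sets $W(x,V)$ and $Z(x,V_1)$ of Lemma~\ref{lem:rsLtop}, and observe that these are compatible with shrinking $Y_2$ to $Y_1$.
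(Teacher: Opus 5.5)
Your route matches the paper's proof. You use Theorem~\ref{thm:maps}, then the $\mathcal O_2$ model of Theorem~\ref{thm:dimO2} with the embedding from the proof of Theorem~\ref{thm:mainInsideO2}, then Theorem~\ref{thm:subsetOrbitBreak} for (3), and finally a ladder between the two sequences of Theorem~\ref{thm:BreExaSeq} for (4). The paper simply asserts that this ladder commutes, so on (4) you are, if anything, more careful than the paper. Be aware that the check is not formal. On the $\mathcal H$-terms the vertical map is restriction to a closed invariant subset, while on the middle terms it is inclusion of an open subgroupoid. So commutativity has to be verified against the actual construction of the maps in the excision sequence; it does not follow from functoriality of relative $K$-theory for inclusions of pairs.

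The genuine gap is in (2). You inherit it from the proof of Theorem~\ref{thm:mainInsideO2}, and the paper's own proof relies on the same step. That argument only uses the last coordinate: from $\varphi^k(\iota(y),z)=\varphi^l(\iota(y'),z)$ it gets $\varphi_n^k(z)=\varphi_n^l(z)$, hence $k=l$. Lemma~\ref{lem:GoodY}(2) also requires $y=y'$, and this does not follow. On $\mathbb{T}^{n-1}$ the map is coordinatewise doubling, which is $2^k$-to-one. For example, suppose $\iota(Y_2)$ contains both $a$ and $a+(\tfrac12,0,\dots,0)$, as it does whenever it contains a coordinate circle. Then $(a,z)$ and $(a+(\tfrac12,0,\dots,0),z)$ have the same image under $\varphi$. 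So the image of $Y_2$ meets one generalized orbit twice, and Lemma~\ref{lem:L} and everything downstream fail.

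So ``some $\iota$'' is not enough: you need $\iota(y)-\iota(y')\notin\mathbb{Z}[1/2]^{n-1}/\Z^{n-1}$ whenever $y\neq y'$. One way to arrange this:
\begin{enumerate}
\item Embed $Y_2\subseteq\R^m$.
\item Choose an injective linear map $A\colon\R^m\to\R^{n-1}$ with $n-1>m$ and $A\R^m\cap\mathbb{Z}[1/2]^{n-1}=\{0\}$. A generic $A$ works, since each of the countably many conditions excludes only a proper algebraic subset.
\item Put $\iota(y)=Ay \bmod \Z^{n-1}$.
\end{enumerate}
Then $k=l$ and $2^k(\iota(y)-\iota(y'))=0$ give $A(y-y')\in 2^{-k}\Z^{n-1}\subseteq A\R^m\cap\mathbb{Z}[1/2]^{n-1}=\{0\}$, so $y=y'$. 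The case $k=0$ shows that $\iota$ is injective, hence an embedding. With this choice the rest of your argument goes through, modulo the naturality check above.
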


\begin{proof}
    By Theorem~\ref{thm:maps}, there are compact, finite-dimensional metric spaces $Y_1, Y_2$ and a continuous embedding $\alpha : Y_1 \to Y_2$ such that 
    \begin{displaymath} 
\xymatrix{ \tilde{K}^*(Y_2) \ar[d]_{\cong} \ar[r]^{\alpha^*} & \tilde{K}^*(Y_1) \ar[d]^{\cong} \\
G_* \ar[r]_{\phi} & H_* .
}
\end{displaymath}
Let $(X, \varphi)$ be the system realizing $\mathcal O_2$ as in Theorem~\ref{thm:dimO2} with $n$ large enough so that $Y_2$ can be embedded into an $(n-1)$-dimensional torus. Then $(X, \varphi)$ satisifies (1). By Theorem~\ref{thm:maps}, there is an embedding $\alpha : Y_1 \to Y_2$, and from this the proof of Theorem~\ref{thm:mainInsideO2} provides an embedding of $Y_1$, and hence an embedding of $Y_2$, into $X$ that satisfy (2). Viewing $Y_1 \subset Y_2 \subset X$, Theorem~\ref{thm:subsetOrbitBreak} provides an inclusion $\iota : C^*(\mathcal G_{Y_2}) \into  C^*(\mathcal G_{Y_1})$, so that (3) holds. Finally, (4) follows by combining Theorem~\ref{thm:maps}, Theorem~\ref{thm:mainInsideO2} (2) and the diagram
\begin{displaymath} 
\xymatrix{ \ar[r] & K^0(Y_2) \ar[d]_{\alpha^*} \ar[r] & K_0(C^*(\mathcal G_{Y_2})) \ar[d]^{\iota_*} \ar[r] & K_0(C^*(\mathcal{G})) \ar[d]_{=} \ar[r] & K^1(Y_2) \ar[d]_{\alpha^*} \ar[r] & \\
\ar[r] & K^0(Y_1) \ar[r] & K_0(C^*(\mathcal G_{Y_1})) \ar[r] & K_0(C^*(\mathcal{G})) \ar[r] & K^1(Y_1) \ar[r] &
,}
\end{displaymath}
which is commutative.
\end{proof}

 \begin{remark} Let $Y_1 \to Y_2$ be an embedding and let $A = C^*(\mathcal G_{Y_2})$ and $B = C^*(\mathcal G_{Y_1})$ be as in  Theorem~\ref{thm:inclusions}. Let $y  \in Y_1$. Then by the properties of $Y_1$, $y$ is not eventually periodic, so we can form the unital UCT Kirchberg algebra $C^*(\mathcal G_{\{y\}})$. Since $\{ y\} \into Y_1 \into Y_2$, we have open inclusions of groupoids $\mathcal G_{Y_2} \subset \mathcal G_{Y_1} \subset \mathcal G_{\{y\}} \subset \mathcal G$ giving inclusions of $\mathrm{C}^*$-algebras $C^*(\mathcal G_{Y_2}) \subset C^*(\mathcal G_{Y_1}) \subset C^*(\mathcal G_{\{y\}}) \subset C^*(\mathcal G) \cong \mathcal O_{2}$.
\end{remark}

As above, let $\mathcal{R}$ be an amenable \'etale groupoid with $\mathrm{C}^*$-algebra isomorphic to the compact operators (for example, let $\mathcal R$ be $\N$ with the full equivalence relation). Applying the results of this section and taking the product of $\mathcal G$ and $\mathcal G_Y$ with $\mathcal R$, we arrive at the next example.

\begin{example} \label{ex:subalgebraStableCase}
Let $Y_1 \to Y_2$ be an embedding and let $A = C^*(\mathcal G_{Y_2})$ and $B = C^*(\mathcal G_{Y_1})$ be as in  Theorem~\ref{thm:inclusions} and let $y  \in Y_1$. Then $K_0(C^*(\mathcal G_{\{y\}})) \cong \Z$ and $K_1(C^*(\mathcal{G})) \cong \{ 0 \}$ hence $C^*(\mathcal G_{\{y\}}\times \mathcal{R}) \cong \mathcal{O}_{\infty} \otimes \mathcal{K}$ by the Kirchberg--Phillips classification theorem. Thus
\[ C^*(\mathcal G_{Y_2}) \otimes \mathcal K \subset  C^*(\mathcal G_{Y_1}) \otimes \mathcal K \subset \mathcal O_\infty \otimes \mathcal K \subset \mathcal O_2 \otimes \mathcal K.\]
\end{example}

\begin{example} \label{ex:subgroup}
As a special case of Theorem \ref{thm:inclusions}, we have the following. Suppose that $G_0$ and $G_1$ are countable abelian groups and $A$ is the unique stable UCT Kirchberg algebra with $K_0(A) \cong  \Z \oplus G_0$ and $K_1(A) \cong G_1$. Also suppose that $H_0$ and $H_1$ are subgroups of $G_0$ and $G_1$ respectively and $B$ is the unique stable UCT Kirchberg algebra with $K_0(B) \cong  \Z \oplus H_0$ and $K_1(B) \cong H_1$. Then $B$ is isomorphic to a subalgebra of $A$.
\end{example}

\begin{example} \label{ex:Oinfinity}
    Let $Y$ be a finite-dimensional compact metric space and fix $y\in Y$. Consider
    \[ \{ y \} \subseteq Y \subseteq [0,1]^d \]
    where $d$ is taken to be large. Applying the process in Example \ref{ex:subalgebraStableCase}, we get
   \[ \mathcal O_\infty \otimes \mathcal K \subset  C^*(\mathcal G_{Y}) \otimes \mathcal K \subset \mathcal O_\infty \otimes \mathcal K \subset \mathcal O_2 \otimes \mathcal K.
   \]
\end{example}

Next we consider another way to construct groupoid models in the purely infinite case from those in the stably finite case. Let $A$ be a simple, separable, unital, nuclear $\mathrm C^*$-algebra. Then $A \otimes \mathcal O_\infty$ is a UCT Kirchberg algebra. Thus by taking a product with a groupoid whose $\mathrm{C}^*$-algebra is stably finite, we can produce further UCT Kirchberg algebras.

\begin{theorem}
Suppose that $\mathcal{G}_{\mathrm{sf}}$ is an amenable \'etale groupoid such that
\begin{enumerate}
\item $C^*(\mathcal{G}_{\mathrm{sf}})$ is simple, separable, stably finite, and
\item $K_0(C^*(\mathcal{G}_{\mathrm{sf}}))\cong G_0$ and $K_1(C^*(\mathcal{G}_{\mathrm{sf}}))\cong G_1$. 
\end{enumerate}
Let $Y$ be such that $\mathcal{G}_Y \times \mathcal{R}$ is a model for $\mathcal{O}_{\infty} \otimes \mathcal{K}$ (for let $Y$ be a single point which is not eventually periodic, see Example \ref{ex:Oinfinity}). Then $C^*(\mathcal{G}_{\mathrm{sf}} \times \mathcal{G}_Y\times \mathcal{R})$ is isomorphic to the unique stable UCT Kirchberg algebra $A$ with $K_0(A) \cong G_0$ and $K_1(A) \cong G_1$.
\end{theorem}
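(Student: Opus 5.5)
The plan is to identify $C^*(\mathcal{G}_{\mathrm{sf}} \times \mathcal{G}_Y\times \mathcal{R})$ with a tensor product and then apply the Kirchberg--Phillips classification theorem. All three groupoids are amenable and étale. For $\mathcal{G}_Y$ this is Corollary~\ref{cor:EtaAndAm}, for $\mathcal{G}_{\mathrm{sf}}$ it is by hypothesis, and for $\mathcal{R}$ it is by choice. Applying the product lemma from the preliminaries twice gives
\[
C^*(\mathcal{G}_{\mathrm{sf}} \times \mathcal{G}_Y\times \mathcal{R}) \cong C^*(\mathcal{G}_{\mathrm{sf}}) \otimes C^*(\mathcal{G}_Y\times\mathcal{R}) \cong C^*(\mathcal{G}_{\mathrm{sf}}) \otimes \mathcal{O}_\infty \otimes \mathcal{K},
\]
where the second isomorphism uses the assumption that $\mathcal{G}_Y\times\mathcal{R}$ models $\mathcal{O}_\infty\otimes\mathcal{K}$. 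For $Y$ a single non-eventually-periodic point, this is the computation $K_0 = \Z$, $K_1 = 0$ in Theorem~\ref{thm:Stabcont}, together with classification.

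Next I will check that $B := C^*(\mathcal{G}_{\mathrm{sf}})\otimes\mathcal{O}_\infty\otimes\mathcal{K}$ is a stable UCT Kirchberg algebra.
\begin{enumerate}
\item It is stable because of the tensor factor $\mathcal{K}$.
\item It is separable and nuclear, since $C^*(\mathcal{G}_{\mathrm{sf}})$ is the $\mathrm{C}^*$-algebra of a second countable amenable groupoid and nuclearity passes to tensor products.
\item It is simple, since the minimal tensor product of simple algebras, at least one of them nuclear, is simple.
\item It is purely infinite by Kirchberg's $\mathcal{O}_\infty$-absorption theorem: $A\otimes\mathcal{O}_\infty$ is purely infinite for any simple $A$.
\item It satisfies the UCT, because $C^*$-algebras of amenable étale (second countable, locally compact Hausdorff) groupoids satisfy the UCT by Tu's theorem, and the UCT class is closed under tensor products with $\mathcal{O}_\infty$ and $\mathcal{K}$. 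Alternatively, $B$ is itself the $\mathrm{C}^*$-algebra of the amenable groupoid $\mathcal{G}_{\mathrm{sf}}\times\mathcal{G}_Y\times\mathcal{R}$, so Tu's theorem applies to it directly.
\end{enumerate}
The stable finiteness hypothesis on $C^*(\mathcal{G}_{\mathrm{sf}})$ plays no role in this step. It only indicates where such models come from.

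Finally, I will compute the $K$-theory using the Künneth formula. This formula holds because $\mathcal{O}_\infty\otimes\mathcal{K}$ is in the bootstrap class. Since $K_*(\mathcal{O}_\infty\otimes\mathcal{K}) = (\Z, 0)$ is torsion-free with $K_0$ generated by the class of a rank-one projection, the Künneth sequence collapses to
\[
K_*(B)\cong K_*(C^*(\mathcal{G}_{\mathrm{sf}}))\otimes\Z\cong(G_0,G_1).
\]
A more elementary route avoids Künneth altogether. The unital embedding $\mathbb{C}\to\mathcal{O}_\infty$ is a $KK$-equivalence, and stabilization is $K$-theory invariant, so the map $a\mapsto a\otimes 1\otimes e$ induces an isomorphism on $K$-theory. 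Classification of stable UCT Kirchberg algebras by $K$-theory then gives $B\cong A$.

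The main obstacle is the UCT for $C^*(\mathcal{G}_{\mathrm{sf}})$. No earlier result in the paper provides it, so I would invoke Tu's theorem for amenable groupoids explicitly. A secondary point needing care is whether the product lemma requires the groupoids to be Hausdorff. If $\mathcal{G}_{\mathrm{sf}}$ is non-Hausdorff, I would add Hausdorffness as a standing assumption, since it holds for all groupoids appearing in the paper.
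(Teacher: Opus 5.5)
Your proposal is correct and follows essentially the route the paper has in mind; the paper gives no separate proof, only the preceding remark that tensoring with $\mathcal{O}_\infty$ yields a UCT Kirchberg algebra. Like that remark, you use the product lemma to identify the algebra with $C^*(\mathcal{G}_{\mathrm{sf}})\otimes\mathcal{O}_\infty\otimes\mathcal{K}$, check that it is a stable Kirchberg algebra with $K$-theory $(G_0,G_1)$, and apply Kirchberg--Phillips. Your explicit appeal to Tu's theorem for (Hausdorff) amenable groupoids is a worthwhile addition: the paper's remark asserts the UCT for $A\otimes\mathcal{O}_\infty$ without justification, and that property holds only when $A$ itself satisfies the UCT.
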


Using orbit-breaking groupoids for a minimal dynamical system $(X, \varphi)$ (here $X$ is a compact metric space and $\varphi: X \rightarrow X$ is a minimal homeomorphism)  \cite[Corollary 7.4]{DPSmain}, we obtain groupoids $\mathcal{G}_{\mathrm{sf}}$  These models give stable UCT Kirchberg algebras where the only restriction is that the $K_0$-group is of the form $G_0 \oplus T$ where $G_0$ is a simple dimension group and $T$ is a countable abelian group.

\section{orbit-breaking after removing a fixed point} \label{sec:OBO2LC} 

In this section, we generalize the construction beyond the compact setting. In particular, here we start with a dynamical system $(X, \varphi)$ as before, but remove a fixed point $w$ from $X$ so that the resulting $\mathrm{C}^*$-algebras are non-unital. In the notation of Section \ref{sec:OB}, we use the orbit-breaking method where $Y_{ep}=Y\cap X_{ep} = \{ w \}$.

Let $Y$ be a locally compact subspace (with one-point compactification $Y\cup\{ \infty\}$) such that
\begin{enumerate}
\item there exists a continuous injection $\iota : Y \cup \{\infty\} \to X$ such that $\iota(\infty) = w$,
\item $\iota(Y\cup \{\infty\})$ intersects each (generalized) orbit at most once (that is, for each $x\in X$, $\orb(x)\cap \iota(Y \cup \{\infty\})$ is the empty set or a singleton) and
\item $\iota(Y)$ contains no eventually periodic points.
\end{enumerate}
We identify $Y$ with a closed subset of $X \setminus \{w\}$. Recall that in Section \ref{sec:OBDR} we define $\mathcal{G}_Y$ using the following conditions: Given $(x, p, z) \in \mathcal{G}|_U$, then we let $(x, p, z) \in \mathcal{G}^Y_{(X, \varphi)}$ if
\begin{enumerate}
\item[(I)] $\orb(x)\cap Y$ is the empty set or
\item[(II)] $\orb(x)\cap Y=\{y\}$ for some $y\in Y$ and $x, z \in \cup_{l\in \N}\varphi^{-l}(y)$ or
\item[(III)] $\orb(x)\cap Y=\{y\}$ for some $y\in Y$ and $x, z \in \orb(x) \backslash \cup_{l\in \N}\varphi^{-l}(y)$.
\end{enumerate}

\begin{lemma} \label{lem:embed-LC-Y}
Let $Y$ be a finite-dimensional second countable locally compact metric space. There exists $n \in \mathbb N$ such that the following holds: Let $(X, \varphi)$  be the system constructed in Theorem~\ref{thm:dimO2} with an $n$-torus as a connected component. Then there is a fixed point $w \in X$ and an embedding $\iota : Y \cup \{ \infty \} \to X$ satisfying
\begin{enumerate}
\item $\iota (\infty) = w$,
\item $\iota(Y)$ contains no eventually periodic points, and meets every $\varphi$-orbit at most once.
\end{enumerate}
\end{lemma}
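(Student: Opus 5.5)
The plan is to mimic the embedding in the proof of Theorem~\ref{thm:mainInsideO2}, but to arrange that the point at infinity goes to a fixed point. Write $X = X_1\times\cdots\times X_1\times X_3$ as in the proof of Theorem~\ref{thm:dimO2}, with the $n$-torus $\mathbb T^{n-1}\times \mathbb T$ sitting inside. In each factor the circle component on which the map is the doubling map $z\mapsto z^2$ (Example~\ref{ex:IanBasEx}; for $X_3$ I would use the analogous component $\pi_\xi(\Sigma^+_{E,0})$, on which the map is a covering of the circle with a fixed point) contains the fixed point $1$. So I would take $w=(1,\dots,1)\in\mathbb T^n$, which is fixed by $\varphi$. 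The compactum $Y\cup\{\infty\}$ is finite dimensional and metrizable, since $Y$ is second countable, locally compact and finite dimensional. Hence for $n$ large there is an embedding $\kappa : Y\cup\{\infty\}\to\mathbb T^{n-1}$, and after composing with a rotation we may take $\kappa(\infty)=(1,\dots,1)$.

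The key construction is to replace the constant last coordinate $z$ used in Theorem~\ref{thm:mainInsideO2} by a continuous function $c: Y\cup\{\infty\}\to\mathbb T$ with $c(\infty)=1$ and $c(y)$ never eventually periodic. I would then set $\iota(y)=(\kappa(y),c(y))$. This map is injective and continuous because $\kappa$ is, and since $Y\cup\{\infty\}$ is compact it is an embedding. Condition (1) holds by construction.

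The main obstacle is making $c$ continuous while its values avoid the dense set $\mathbb Q/\mathbb Z$ of eventually periodic points, together with all their orbit relations. Continuity at $\infty$ forces $c(y)\to 1$, but a continuous function on a connected piece of $Y$ would sweep out an interval of values and hit rationals. To get around this, I would use the last coordinate only to carry \emph{decay}, and place the obstruction in the first $n-1$ coordinates. Concretely, I would choose $\kappa$ so that every coordinate of $\kappa(y)$, for $y\in Y$, is non-eventually periodic and the coordinates are generic. This is available by a Baire category argument: the set of embeddings into $\mathbb T^{n-1}$ whose image avoids the countably many closed sets of the form $\{u : \varphi_1^k(u_i)=\varphi_1^l(u_i)\}$ and $\{u,u' : \varphi^k(u)=\varphi^l(u'),\ u\neq u'\}$ is residual in the space of maps, provided $n$ is large relative to $\dim Y$. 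With this choice no point of $\iota(Y)$ is eventually periodic, so $c$ may simply be taken to be $1$, which is fixed.

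It remains to check that each orbit meets $\iota(Y\cup\{\infty\})$ at most once. If $\varphi^k(\iota(y))=\varphi^l(\iota(y'))$, the genericity condition above gives $y=y'$ and $k=l$, by Lemma~\ref{lem:GoodY}. The point $w$ cannot lie in the orbit of any $\iota(y)$, since $\orb(w)$ consists of eventually periodic points while $\iota(y)$ is not eventually periodic. The dimension count for the Baire argument, namely that the relation sets have codimension at least $1$ in $\mathbb T^{n-1}$ or in $\mathbb T^{n-1}\times\mathbb T^{n-1}$ so that a generic embedding of a finite-dimensional space misses them once $n-1>2\dim Y+1$, is the step I expect to require the most care.
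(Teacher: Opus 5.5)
\textbf{There is a genuine gap, at exactly the step you flagged.}

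The genericity you ask for is that \emph{every coordinate} of $\kappa(y)$ be non-eventually periodic, i.e.\ that $\kappa(Y)$ miss each set $\{u : \varphi_1^k(u_i)=\varphi_1^l(u_i)\}$. This cannot be achieved once $Y$ has a connected component with more than one point, for instance $Y=\mathbb{R}^2$ or a Moore space minus a point, which are the spaces the paper needs.

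It is the same obstruction you correctly identified for $c$. Each $\kappa_i$ is continuous, so on a connected piece of $Y$ it is either constant or its image contains an arc. Every arc contains roots of unity. Hence all coordinates avoid roots of unity only if $\kappa$ is constant on each component of $Y$, which contradicts injectivity.

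Moreover, $\kappa(\infty)=(1,\dots,1)$ lies in all of these closed sets and, for non-compact $Y$, lies in $\overline{\kappa(Y)}$. So ``$\kappa(Y)$ avoids them'' is not even an open condition.

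Your dimension count is also wrong. General position only pushes a $d$-dimensional compactum off closed sets of codimension greater than $d$. The sets $\{u_i=\zeta\}$ have codimension $1$ for every $n$, so enlarging $n$ does not help.

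\textbf{How to repair it, and how it compares with the paper.}

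The Baire approach can be repaired, but with different conditions. A point of the product is eventually periodic only if all of its coordinates are. So it suffices that $\kappa(Y)$ miss the countable set of points all of whose coordinates are roots of unity, a set of codimension $n-1$. The relation sets $\{(u,u') : \varphi^k(u)=\varphi^l(u')\}$ are closed subgroups of $\mathbb{T}^{n-1}\times\mathbb{T}^{n-1}$ of codimension $n-1$, so $n-1>2\dim Y$ handles distinct pairs.

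Because these forbidden sets contain or accumulate at $\kappa(\infty)$, you should set the argument up as follows.
\begin{enumerate}
\item Work in the complete space of maps $f$ with $f(\infty)=(1,\dots,1)$.
\item Impose the conditions only on a compact exhaustion $K_m$ of $Y$, and on disjoint pairs of compacta in $Y$ from a countable family.
\item Perform the density perturbations rel $\infty$, for example by Tietze extension from $K_m\cup\{\infty\}$.
\end{enumerate}
This gives a valid non-constructive proof.

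The paper avoids general position entirely by writing down an explicit embedding $y\mapsto (j(y),\alpha j(y),j(y)^2,\alpha j(y)^2,p)$, with $\alpha$ irrational, $j(\infty)=0$ and $p$ a fixed point of $X_3$. Irrationality of $\alpha$ turns congruences modulo $1$ into equalities of reals. This forces any eventually periodic point of the image to be $w$. The squared coordinates then force $k=l$ in any relation $\varphi^k(z)=\varphi^l(z')$. Your route, once repaired, needs no algebraic trick and works for any generic embedding. The paper's route is concrete and needs no Baire or general-position machinery.
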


\begin{proof}
 The embedding of $Y$ is similar to that of the compact case, but since the point at infinity is mapped to a fixed point, we need to take extra care to ensure the embedding does not contain any eventually periodic points. Note that the one-point compactification $Y \cup \{\infty\}$ of $Y$ is metrizable. Thus there is a system $(X, \varphi)$ as constructed in Theorem~\ref{thm:dimO2} with $k$ sufficiently large so that there exists an embedding
 \[ j : Y \cup \{\infty\} \into (S^1)^k.
 \]
Then $X$ contains $(S^1)^k$ as one of its connected components and, by construction, $\varphi|_{S^1}$ is the two-fold covering map $z \to 2z \mod 1$. Let  $p \in X_3$ be any fixed point, and let
 \[ w := (\underbrace{0, \dots, 0}_k, \underbrace{0, \dots, 0}_k, w) \in (X_1)^{2k} \times X_3 = X.
 \]
 Then $w \in X$ is a fixed point. Since $k$ is sufficiently large, there is an embedding 
 \[ j : Y \cup \{\infty\} \to (S^1)^k,\]
 such that $j(\infty) = (0, \dots, 0)$. Let $\alpha$ be an irrational number. Set
 \[ \iota : Y \cup \{\infty\} \to X,  y \to (j(y), \alpha j(y),  j(y)^2, \alpha j(y)^2, p),
 \]
 Then $\iota$ is evidently an embedding and $\iota(\infty) = (j(\infty), j(\infty)0, p) =  w$. 

By Lemma~\ref{lem:GoodY}, we only need to show that if there are $k, l \in \mathbb Z$ and $z, z' \in \iota(Y)$ such that $\varphi^k(z) = \varphi^l(z')$, then $k-l = 0$.

Let $z, z' \in \iota(Y)$ and suppose that $\varphi^k(z) = \varphi^l(z')$ for some $k \neq  l \in \mathbb Z_{\geq 0}$. 
Write 
\[  z = (z_1, \dots, z_k, \alpha z_1, \dots, \alpha z_k,   z_1^2, \dots,  z_k^2,  \alpha z_1^2, \dots, \alpha z_k^2, p),\]
and
\[ z' = (z_1', \dots, z_k', \alpha z_1', \dots, \alpha z_k',   (z_1')^2, \dots,  (z_k')^2 , \alpha (z_1')^2, \dots, \alpha (z_k')^2,   p),
\]
for $ 0 \leq z_i, z_i' < 1$.  Then $2^{k-l} z_i - z_i' \in \mathbb Z$ and $2^{k-l} \alpha (z_i - z_i') \in \mathbb Z$. Since $\alpha$ is irrational, $2^{k-l} z_i = z_i'$.  Also, $(2^{k-l} - 1) z_i^2 \in \mathbb Z$ and $(2^{k-l} - 1) \alpha z_i^2 \in \mathbb Z$  so there exists an integer $m, n$ such that
\[ z_i' = \frac{m}{\sqrt{2^{k-l} - 1}} = \frac{n}{\sqrt{\alpha}}.\]
This is only possible if $z_i'  = 0$ for every $1 \leq i \leq k$, in which case $z' = w \notin \iota(Y)$, contradiction. Thus $k = l$, $z =z'$ and $\iota(Y)$ meets every orbit at most once.
\end{proof}

\begin{theorem} \label{thm:stableModelFixed}
Let $G_0$ and $G_!$ be countable abelian groups. Then there exists $(X, \varphi)$ and a locally compact, non-compact metric space $Y$ such that 
\begin{enumerate}
\item the orbit-breaking groupoid $\mathcal G_Y$ is essentially principal and locally contracting, 
\item $K_0(C^*(\mathcal G_Y)) \cong G_0$ and $K_1(C^*(G_Y)) \cong G_1$.
\end{enumerate}
In particular, $C^*(\mathcal G_Y)$ is isomorphic to the stable UCT Kirchberg algebra $A$ with $K_*(A) \cong G_*$.
\end{theorem}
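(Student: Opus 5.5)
We will follow the pattern of the proof of Theorem~\ref{thm:mainInsideO2}, now with $Y_{\mathrm{ep}}=\{w\}$ and $U=X\setminus\{w\}$. First we choose a locally compact, non-compact, finite-dimensional second countable metric space $Y$ with $K^0(Y)\cong G_0$ and $K^1(Y)\cong G_1$, where $K^*$ denotes $K$-theory with compact supports (equivalently, $\tilde K^*(Y\cup\{\infty\})$). To get such a $Y$, take a pointed finite-dimensional compact space $Z$ with $\tilde K^0(Z)\cong G_0$ and $\tilde K^1(Z)\cong G_1$, for example the wedge/mapping-cone construction in the proof of Theorem~\ref{thm:maps}, applied to $G_0$ and to a suspension for $G_1$. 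Then set $Y=Z\setminus\{\mathrm{pt}\}$. If $Z\setminus\{\mathrm{pt}\}$ happens to be compact, we instead wedge $Z$ with an interval based at the basepoint and delete the free endpoint; this does not change reduced $K$-theory, and it guarantees that $Y$ is non-compact.

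Next, Lemma~\ref{lem:embed-LC-Y} gives a system $(X,\varphi)$ from Theorem~\ref{thm:dimO2} with $C^*(\mathcal G)\cong\mathcal O_2$, a fixed point $w$, and an embedding $\iota\colon Y\cup\{\infty\}\to X$ with $\iota(\infty)=w$. The image $\iota(Y)$ contains no eventually periodic points and meets every orbit at most once. We identify $Y$ with the compact set $\iota(Y\cup\{\infty\})$. It meets each orbit at most once, since $\orb(w)=\orb^-(w)$ contains no points of $\iota(Y)$, these being non-eventually-periodic. Also $Y_{\mathrm{ep}}=\{w\}$, so the standing assumptions (i) and (ii) of Section~\ref{sec:OBDR} hold.

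For (1) we invoke the fixed-point version of Proposition~\ref{purelyInfProp}. Its hypotheses (mixing, open, locally expanding, no isolated points, $X_{\mathrm{ep}}$ dense with empty interior) are supplied by Theorem~\ref{thm:dimO2}. The one point needing care is minimality of $\mathcal G_Y$ on $U$, which uses Corollary~\ref{cor:denseBackLC} in place of Lemma~\ref{lem:denseBack}. We also need that the locally contracting argument can choose the eventually periodic point $x_0\in U$ and its neighbourhoods inside $X\setminus\{w\}$. Density of $X_{\mathrm{ep}}$ and the absence of isolated points make this possible. So $C^*(\mathcal G_Y)$ is simple, purely infinite, nuclear and separable by Corollary~\ref{cor:EtaAndAm}, and it satisfies the UCT because $\mathcal G_Y$ is amenable (Tu).

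For (2) we apply Theorem~\ref{thm:BreExaSeq}. Since $K_*(C^*(\mathcal G))=0$, the six-term sequence gives $K_*(C^*(\mathcal G_Y))\cong K^*(Y\setminus Y_{\mathrm{ep}})=K^*(Y)\cong G_*$. It remains to prove stability. $C^*(\mathcal G_Y)$ is non-unital, because its unit space $X\setminus\{w\}$ is non-compact. It is $\sigma$-unital and simple purely infinite, so Zhang's dichotomy makes it stable, and Kirchberg--Phillips then identifies it with $A$.

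We expect the main obstacle to be ensuring that the proof of Proposition~\ref{purelyInfProp} truly transfers to $U$. Specifically, the Morita-equivalence step (Proposition~\ref{prop:transversal}) must accommodate the point at infinity, and orbits in $U$ must stay dense after $w$ is removed. We would handle both via Corollary~\ref{cor:denseBackLC} together with the observation that $w\notin\orb(Y\setminus\{w\})$.
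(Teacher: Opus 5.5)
Your proposal is correct and follows the paper's own proof. The paper likewise takes $Y$ to be a punctured compact space with the right reduced $K$-theory, embeds $Y\cup\{\infty\}$ via Lemma~\ref{lem:embed-LC-Y} with $\infty\mapsto w$, and reads off $K_*(C^*(\mathcal G_Y))\cong K^*(Y)$ from Theorem~\ref{thm:BreExaSeq} using $K_*(\mathcal O_2)=0$. The paper leaves several points implicit that you make explicit and handle soundly: the fix for non-compactness of $Y$ when the deleted point is isolated, the check that $\orb(w)$ misses $\iota(Y)$, the use of the fixed-point version of Proposition~\ref{purelyInfProp} for (1), and Zhang's dichotomy for stability.
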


\begin{proof}
The proof is a modification of Theorem~\ref{thm:mainInsideO2}. Let $Y$ be a locally compact, non-compact finite-dimensional metric space with $K^*(Y) \cong G^*$ (for example, let $Y_0$  be a compact finite-dimensional metric space satisfying $K^0(Y_0) \cong \mathbb Z \oplus G_0$ and $K^1(Y_0) = G_1$, and let $Y = Y_0 \setminus \{y\}$ for some point $y$). By Lemma~\ref{lem:embed-LC-Y}, There exists $n \in \mathbb N$ such that the following holds: Let $(X, \varphi)$  be the system constructed in Theorem~\ref{thm:dimO2} with an $n$-torus as a connected component. Then there is a fixed point $w \in X$ and an embedding $\iota : Y \cup \{ \infty \} \to X$ satisfying
\begin{enumerate}
\item $\iota (\infty) = w$,
\item $\iota(Y)$ contains no eventually periodic points, and meets every $\varphi$-orbit at most once.
\end{enumerate}
Let $U = X \setminus \{ w\}$, and let $\mathcal G$ be the Deaconu--Renault groupoid associated to $(X, \varphi)$. By Proposition~\ref{prop:GUopen}, $C^*(\mathcal G|_U)$ is a hereditary $\mathrm{C}^*$-subalgebra of $C^*(\mathcal G)$, which is moreover full since $C^*(\mathcal G)$ is simple. Thus $K_*(C^*(\mathcal G|_U)) \cong K_*(C^*(\mathcal G))$, which is trivial in both degrees. The result follows from Theorem~\ref{thm:BreExaSeq}; compare with the proof of Theorem~\ref{thm:mainInsideO2}.
\end{proof}

In the unital case of the previous section, we were unable to construct all unital Kirchberg algebras, because our method does not allow us to control the class of the unit in $K$-theory and the $K_0$-group needed to contain an copy of the integers. 

On the other hand, the method of removing a fixed point \emph{does} allow us to realize all stable UCT Kirchberg algebras, as the next theorem shows.

\begin{theorem} \label{thm:NonUnitalIncl}
       Let $A$ and $B$ be stable UCT Kirchberg algebras with 
       \[K_0(A) = G_0, K_1(A) = G_1,  \text{ and } \, K_0(B) = H_0, K_1(B)  = H_1.
       \]
      Suppose $\psi_* : G_* \to H_*$ is a group homomorphism.  
       \begin{enumerate}
           \item There exists $(X, \varphi)$ and $Y_1, Y_2$ finite-dimensional locally compact metric spaces such that $A \cong C^*(\mathcal G_{Y_2})$ and $B \cong C^*(\mathcal G_{Y_1})$.
           \item There is an open inclusion of groupoids $\mathcal G_{Y_2} \subset \mathcal G_{Y_1}$.
           \item There is an inclusion of $\mathrm{C}^*$-algebras $\Psi: A \to B$ such that $K_*(\Psi) = \psi_*$. In particular, $A$ is isomorphic to a $\mathrm{C}^*$-subalgebra of $B$. 
       \end{enumerate}
\end{theorem}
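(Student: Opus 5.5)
The plan is to combine the topological realization of Theorem~\ref{thm:maps} with the fixed-point embedding of Lemma~\ref{lem:embed-LC-Y}, and then argue exactly as in Theorem~\ref{thm:inclusions}, now using the reduced groupoid $\mathcal G|_U$ with $U = X\setminus\{w\}$.

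\textbf{Step 1: the spaces and the map.} Apply Theorem~\ref{thm:maps} to the homomorphism $\psi_*$. This produces compact finite-dimensional metric spaces $Z_1, Z_2$ and a continuous embedding $\alpha : Z_1 \to Z_2$ with $\tilde K^*(Z_2)\cong G_*$, $\tilde K^*(Z_1)\cong H_*$, and $\alpha^*$ corresponding to $\psi_*$.

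Choose a basepoint $z_0\in Z_1$ and set $Y_2 = Z_2\setminus\{\alpha(z_0)\}$ and $Y_1 = Z_1\setminus\{z_0\}$. Then $Y_i\cup\{\infty\}\cong Z_i$, and $K^*(Y_i) = \tilde K^*(Z_i)$. If the construction of Theorem~\ref{thm:maps} does not already respect basepoints, first replace $\alpha$ by a homotopic pointed map; this can be done because the spaces there are CW complexes with wedge points.

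Since $\alpha$ is a pointed embedding, it restricts to a closed embedding $Y_1 \hookrightarrow Y_2$. Its extension to one-point compactifications is $\alpha$ itself. Because $K^*(\,\cdot\,)$ for locally compact spaces is reduced $K$-theory of the one-point compactification, the induced map $K^*(Y_2)\to K^*(Y_1)$ is identified with $\psi_*$.

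\textbf{Step 2: embedding into $X$ and the groupoids.} Apply Lemma~\ref{lem:embed-LC-Y} to $Y_2$. This gives a system $(X,\varphi)$ from Theorem~\ref{thm:dimO2} with $C^*(\mathcal G)\cong\mathcal O_2$, a fixed point $w$, and an embedding $\iota: Z_2\to X$ with $\iota(\infty)=w$, such that $\iota(Y_2)$ has no eventually periodic points and meets each orbit at most once.

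Composing with $\alpha$ gives $\iota\circ\alpha: Z_1\to X$, which sends $\infty$ to $w$. The set $\iota(\alpha(Y_1))\subset\iota(Y_2)$ inherits both properties, and $\iota(Z_1)\cap X_{\mathrm{ep}} = \iota(Z_2)\cap X_{\mathrm{ep}} = \{w\}$.

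Theorem~\ref{thm:subsetOrbitBreak}, case (2), then gives an open inclusion $\mathcal G_{Y_2}\subset\mathcal G_{Y_1}$ (with $Y_i$ identified with their images together with $w$). This is (2), and it yields an inclusion $\Psi: C^*(\mathcal G_{Y_2})\to C^*(\mathcal G_{Y_1})$.

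By Theorem~\ref{thm:stableModelFixed}, more precisely its proof via Theorem~\ref{thm:BreExaSeq}, the $K$-theory of $C^*(\mathcal G|_U)$ vanishes, so $K_*(C^*(\mathcal G_{Y_i}))\cong K^*(Y_i)$. The algebras $C^*(\mathcal G_{Y_i})$ are simple and purely infinite by the fixed-point version of Proposition~\ref{purelyInfProp}, and they satisfy the UCT because the groupoids are amenable. They are non-unital because $U$ is non-compact. Being $\sigma$-unital, Kirchberg and non-unital, they are stable by Zhang's dichotomy. Kirchberg--Phillips classification then gives $A\cong C^*(\mathcal G_{Y_2})$ and $B\cong C^*(\mathcal G_{Y_1})$, which is (1).

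\textbf{Step 3: $K_*(\Psi)=\psi_*$.} Form the ladder of six-term sequences from Theorem~\ref{thm:BreExaSeq} for $Y_2$ and $Y_1$, as in the proof of Theorem~\ref{thm:inclusions}. The middle vertical map is the identity on $K_*(C^*(\mathcal G|_U))=0$, and the outer vertical maps are restriction $K^*(Y_2)\to K^*(Y_1)$.

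This naturality step is the main obstacle. One must check that the exact sequence of \cite[Theorem 2.4]{Put:K-theoryGroupoids} is natural for the inclusion $\mathcal G_{Y_2}\subset\mathcal G_{Y_1}$. One must also check that the Morita equivalences of Proposition~\ref{prop:transversal} intertwine $C^*(\mathcal H_{Y_2})\to C^*(\mathcal H_{Y_1})$ with restriction of functions $C_0(Y_2)\to C_0(Y_1)$. The proof would appeal to the same naturality already used in Theorem~\ref{thm:inclusions}: $L_{Y_1}\subset L_{Y_2}$, and the transversals are compatible.

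Given the ladder, the horizontal maps $K^*(Y_i)\to K_*(C^*(\mathcal G_{Y_i}))$ are isomorphisms, so $K_*(\Psi)$ is identified with $\alpha^*|$, which is $\psi_*$. This gives (3).
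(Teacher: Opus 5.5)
Your proposal is correct and follows essentially the same route as the paper's proof. Both take the compact spaces and embedding from Theorem~\ref{thm:maps} (via Theorem~\ref{thm:inclusions}), delete a point, embed with $\infty\mapsto w$, obtain $\mathcal G_{Y_2}\subset\mathcal G_{Y_1}$ from Theorem~\ref{thm:subsetOrbitBreak}, and read off $K_*(\Psi)=\psi_*$ from the ladder of six-term sequences; the paper likewise only asserts that ladder's naturality by analogy with Theorem~\ref{thm:inclusions}(4). You are in fact more careful than the written proof in three places: deleting $\alpha(z_0)$ so the restriction is a closed embedding, invoking Lemma~\ref{lem:embed-LC-Y} explicitly, and justifying stability via Zhang's dichotomy. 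Deleting $\alpha(z_0)$ also makes $\alpha$ automatically pointed, so your homotopy remark is unnecessary.
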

\begin{proof}
  Choose finite-dimensional compact metric spaces $\hat{Y}_1$ and $\hat{Y}_2$ as in Theorem~\ref{thm:inclusions} with respect to $\psi_* : G_* \to H_*$.  Let $y_1 \in \hat{Y}_1$, $y_2 \in \hat{Y}_1$. Set $Y_1 := \hat{Y}_1 \setminus \{y_1\}$ and $Y_2 := \hat{Y}_2 \setminus \{y_2\}$. By Theorem~\ref{thm:inclusions}, there is an embedding $\hat{Y}_1 \to \hat{Y}_2$. Restricting to an embedding of $Y_1 \to Y_2$, we obtain $\mathcal G_{Y_2}$ as an open subgroupoid of $\mathcal G_{Y_1}$ and thus an inclusion $\iota : C^*(\mathcal G_{Y_2}) \to C^*(\mathcal G_{Y_1})$. Using an analogous argument in the nonunital setting to that given in the proof ofTheorem~\ref{thm:inclusions} (4),  the diagrams 
  \begin{displaymath} 
\xymatrix{ K_0(C^*(\mathcal G_{Y_2})) \ar[d]_{\cong} \ar[r]^{\iota_*} & K_0(C^*(\mathcal G_{Y_1})) \ar[d]^{\cong} \\
 G_0 \ar[r]_{\id \oplus \psi_*} &  H_0,} \quad \xymatrix{ K_1(C^*(\mathcal G_{Y_2})) \ar[d]_{\cong} \ar[r]^{\iota_*} & K_1(C^*(\mathcal G_{Y_1})) \ar[d]^{\cong} \\
 G_1 \ar[r]_{\psi_*} & H_1 ,
}
\end{displaymath}
commute. This shows that $A \cong C^*(\mathcal G_{Y_2})$, $B \cong  C^*(\mathcal G_{Y_1})$, and $K_*(\Psi) = \psi_*$.
\end{proof}

\begin{remark}
    Using methods similar to those used in the previous section, we obtain results similar to Examples \ref{ex:subalgebraStableCase}, \ref{ex:subgroup}, and \ref{ex:Oinfinity}.
\end{remark}

\begin{example}
    We assume the setup of Theorem \ref{thm:stableModelFixed}, but make the additional assumption that $G_0\cong \Z \oplus \tilde{G}_0$ for some countable abelian group $\tilde{G}_0$. 

    Then, in the proof of Theorem \ref{thm:stableModelFixed}, let $Y$ be a \emph{compact} metric space with $K$-theory 
    \[  K^0(Y) \cong G_0\cong \Z \oplus \tilde{G}_0 \hbox{ and }K^1(Y)\cong G_1\]

    The one point compactification of $Y$ is $Y \cup \{ \infty\}$ where $\infty$ is an isolated point. The existence of the required embedding of $Y\cup \{\infty \}$ simplifies in this case because $\infty$ is isolated (compare the proof of Lemma \ref{lem:embed-LC-Y} with the proof of the embedding in the proof of Theorem \ref{thm:mainInsideO2}).   
\end{example}

\begin{example}
    Let $\mathcal{O}_n$ denote the Cuntz algebra for some $2<n<\infty$ and $\mathcal{K}$ denote the compact operators. Theorem \ref{thm:Stabcont} fails to give a groupoid model for $\mathcal{O}_n\otimes \mathcal{K}$ because the $K_0$-group of this algebra does not contain the integers.

    Theorem \ref{thm:stableModelFixed} does produce groupoid models for this algebra. An explicit choice for the relevant space is obtained as follows. Let $Y_0$ be the Moore space with $K$-theory:
    \[ K^0(Y_0) \cong \Z \oplus \Z_{n-1} \hbox{ and } K^1(Y_0)\cong \{ 0 \}. \]
    Then the required space is obtained by taking $Y$ to be $Y_0$ with one point removed (as was done more generally in the proof of Theorem \ref{thm:stableModelFixed}). 
\end{example}

\begin{example}
    Since there are many finite-dimensional (locally) compact metric spaces with the same $K$-theory groups, we obtain many different groupoid models for a given stable UCT Kirchberg algebra. 

    By varying the choice of $(X, \varphi)$ in the statement of Theorem \ref{thm:stableModelFixed}, we also get groupoid models where the unit spaces have different dimensions. 

    As an explicit example, let $A=\mathcal{O}_\infty \otimes \mathcal{K}$. For $Y$ (see Theorem \ref{thm:stableModelFixed}) we can take a single point or $\R^2$ or $\R^4$ (among (many) other choices). Moreover, once we have picked the space $Y$, there are many choices for $(X, \varphi)$ since we need only pick $(X, \varphi)$ so that it contains a $n$-torus where $n$ is large enough to admit the required embedding of $Y\cup \{\infty\}$, see the proofs of Lemma \ref{lem:embed-LC-Y} and Theorem \ref{thm:stableModelFixed}.
\end{example}


\end{document}